\theoremstyle{plain}
\newtheorem{lem}{Lemma}[section] 
\newtheorem{thm}[lem]{Theorem}
\newtheorem{cor}[lem]{Corollary}
\newtheorem{prop}[lem]{Proposition}
\theoremstyle{definition}
\newtheorem{defn}[lem]{Definition}
\newtheorem{ex}[lem]{Example}
\theoremstyle{remark}
\newtheorem{rk}[lem]{Remark}
\numberwithin{equation}{lem}
\newcommand{\KN}{\mathbb{N}}
\newcommand{\CA}{\mathcal{A}}
\newcommand{\CG}{\mathcal{G}}
\newcommand{\CM}{\mathcal{M}}
\newcommand{\CP}{\mathcal{P}}
\newcommand{\g}{\mathfrak{g}}
\newcommand{\CI}{C^\infty}
\newcommand{\st}{\hspace{.05in}:\hspace{.05in}}
\newcommand{\y}{\hspace{.09in}\text{and}\hspace{.09in}}
\newcommand{\de}{\partial}
\newcommand{\fto}{\rightarrow}
\newcommand{\soutar}{\rightrightarrows}
\newcommand{\bt}{\mathbf{t}}
\newcommand{\bs}{\mathbf{s}}
\title{Principal bundle  groupoids, their gauge group and their nerve.}
\author{Alfonso Garmendia
	\footnote{Departament de Matemàtiques, Universitat Politècnica de Catalunya (UPC), Barcelona, Spain}
\footnote{Centre de Recerca Matemàtica,  Barcelona, Spain
	Email: 
\texttt{agarmendia@crm.cat}} 
	\and 
	Sylvie Paycha\footnote{Institut für Mathematik, University of Potsdam, Potsdam, Germany
		Email: 
		\texttt{paycha@math.uni-potsdam.de}} 
}
\begin{document}

\date{\today}

\maketitle
\begin{abstract}
{ We consider groupoids in the category of principal bundles, which we call principal bundles (PB) groupoids. Inspired by work by Th. Nikolaus and K. Waldorf, we generalise bundle gerbes over manifolds to bundle gerbes over groupoids and discuss  a functorial correspondence between PB groupoids and bundle gerbes over groupoids. From a PB groupoid over a fibre product groupoid, {we build} a bundle gerbe over another fibre product groupoid. Conversely, from a bundle gerbe over a Lie groupoid, we build a  PB groupoid.  It has a trivial base and  from any  PB groupoid with trivial base, we build a bundle gerbe over a Lie groupoid.
	{ 	In that case,    the resulting  bundle gerbe is isomorphic   as a groupoid to a partial quotient  of the PB groupoid.}  We describe the nerves of PB groupoids and their partial quotients, which are simplicial objects in the category of principal bundles. Applying this construction enables us  to define the inner transformation group of the nerve of a partial quotient groupoid and to describe the  { transformations of the corresponding} bundle gerbe.}
\end{abstract}

 \tableofcontents
 
 \section*{Preamble}
 
 We choose to work with left group actions for principal bundles. The main reason for this choice is aesthetic since left actions combine nicely with the groupoid composition, taking an arrow from right to left in contrast with \cite{NW}, where they are taken from left to right.
Nevertheless, our conventions for bundle gerbes over Lie 2-groups coincides with those of \cite{NW}.

 \section*{Introduction}
 
{ This is an exploratory paper prompted by the wish to investigate higher analogs of gauge groupoids. Our interest for this class of groupoids stems from   the fact that they correspond to the Lie groupoids that  host direct connections studied in \cite{ABFP}. }

We consider principal bundles groupoids, namely groupoids in the category of principal bundles. Principal bundles groupoids are of the form $P^{(1)}\fto M^{(1)}$, with  $P^{(1)}\soutar P$ and $M^{(1)}\soutar M$ two Lie groupoids, with a structure group given by a Lie group groupoid ${G^{(1)}}= H\rtimes G\soutar G$  (Definition \ref{defn:Lie3pb}). Their structure group, also called a Lie $2$-group, can also be viewed as a crossed module $H\rtimes G$  built from the action of a Lie group $G$ on another Lie group  $H$ and a homomorphism $d:H\fto G$. 

{Our definition differs from other known notions of principal bundles that involve the action of a groupoid,   such as principal groupoid bundles arising in \cite{MM}, principal bundles over groupoids in \cite{LTX}, or principal 2-bundles  considered in \cite{NW} ({these last ones can nevertheless be viewed as  principal bundle groupoids, see Proposition \ref{prop:kw.to.us}.}), hence the terminology {  ``principal bundle groupoids''} we adopt here. The  main differences lie in the fact that 
	\begin{itemize}
		\item  we allow  $M^{(1)}$ to be any groupoid over $M$, not only the identity groupoid,
		\item we allow $G^{(1)}$ to be any Lie 2-group, generalising other approaches that only allow Lie groups,
		\item the action we consider is not a groupoid action on a manifold but a Lie 2-group action on a groupoid.
	\end{itemize}}

 We  consider the partial quotient    by the action on $P^{(1)}$ of the identity bisection of $H\rtimes G$, i.e. by $\{e\}\rtimes G$, and use the shorthand notation ${P^{(1)}} /_G$ (Def. \ref{eq:isoPG})  justified by:
$$ {P^{(1)}} /_G:=(P^{(1)}/\{e\}\rtimes G)\soutar M. $$

  { Partial quotients $P^{(1)}/_G$ are relevant for the study of Lie 2-group bundle gerbes over a Lie groupoid $Y^{(1)}\soutar Y$. These generalise  the classical notion of Lie 2-group bundle gerbe over a smooth manifold \cite[Definition 5.1.1]{NW} .   Specialising to bundle gerbes over fibre bundle Lie groupoids   $Y^{[2]}\soutar Y$ yields back the classical Lie 2-group bundle gerbe over the orbit space  $M\simeq Y/Y^{[2]}$.  From a principal  Lie 2-group bundle groupoid  $P^{(1)}\soutar P$  over a fibre bundle groupoid  $Y^{[2]}\soutar Y$, there is a bundle gerbe  over the fibre bundle groupoid $P^{[2]}\soutar P$, which yields a  Lie 2-group  bundle gerbe over the quotient space $P/P^{[2]}\cong M$. We also build  a Lie 2-group  bundle gerbe  over $Y^{(1)}$ from a   Lie 2-group  principal bundle groupoid  $P^{(1)}\soutar P$ over the Lie groupoid $Y^{(1)}\to Y$, with trivial base,  namely such that $P\simeq Y\times G$.   In  both cases, the resulting  bundle gerbes are Morita equivalent to the partial quotient ${P^{(1)}} /_G$. When $P^{(1)}\soutar P$ has trivial base, the resulting bundle gerbe is  actually isomorphic to $P^{(1)}/_G$  as a groupoid.

Finally, we discuss the   inner transformations of a principal $H\rtimes G$-bundle groupoid   $P^{(1)}\to M^{(1)}$ as well as  inner transformations of its partial quotient  $P^{(1)}/_G$, then generalizing them to the $k$-nerves of the corresponding objects. In particular, this  applies to inner transformations of the bundle gerbe built from a  principal  Lie 2-group bundle groupoid $P^{(1)}\soutar P$ with trivial base.}
 
 Let us briefly describe the structure of this note. 
 
 \begin{itemize}
 	\item In Section \ref{sec:PBgpd}, we review some known facts on Lie $2$-groups and define principal bundle groupoids, relating them to affine concepts in the literature, such as in \cite{NW}, \cite{LTX} and \cite{MM}.   Proposition  \ref{prop:kw.to.us} describes a principal $2$-bundle on a manifold $M$ as defined  in \cite{NW} as  a principal bundle groupoid over a Lie groupoid which is Morita equivalent to $M$.
 
 	\item   Section \ref{sec:PBGtoBG} gives a functorial correspondence, {in part inspired by \cite{NW}}, between certain classes of PB groupoids and bundle gerbes over groupoids (see Definition \ref{defn:bundlegerbeovergrp}). From a bundle gerbe $B\to Y^{(1)}$ over a groupoid $Y^{(1)}\to Y$, we build a trivial base groupoid $P^{(1)}\soutar P$ (Proposition \ref{prop:btpb.to.grb}) which can in turn be sent to a bundle gerbe $B\simeq P^{(1)}/G$ {(Proposition \ref{prop:grb.to.pb})}. 
 {In Theorem \ref{thm:pb.to.grb}, starting from a PB groupoid over $Y^{[2]}=Y\times_M Y$ we build a bundle gerbe over $P\times_M P$: these two groupoids are Morita equivalent to the identity groupoid over the manifold $M$.}
 	
 	 \item In Section \ref{sec:PBnerve}, we  describe the nerve $N^\bullet({P^{(1)}})$ of a Principal bundle groupoid ${P^{(1)}}$ as a simplicial set of regular principal bundles. Where, $G^{(k)}\curvearrowright P^{(k)}\fto M^{(k)}$ (Theorem \ref{prop:princ.2.bnd}). 
 We further interpret the  nerve $N^\bullet({P^{(1)}} )$ of  ${P^{(1)}}$   as a    principal $G $-bundle  over the nerve $N^\bullet({ {P^{(1)}} /G})$ of the quotient groupoid (Proposition \ref{prop:NerveG}).  
 
  \item Section \ref{ec:innertrans} is dedicated to inner automorphisms of PB groupoids and their nerves. Let us denote by ${\rm Mor}({P^{(1)}})$, the morphisms of principal bundle groupoid over the identity on its base ${{M^{(1)}}}$. We get an isomorphism for  nerves which makes clear that any morphism is an automorphism (see (\ref{eq:AutNP})): 
   $$C^\infty_{H^\bullet \rtimes G}  ({P^{\bullet}}, H^\bullet \rtimes G)\cong {\rm Mor}({P^{\bullet}})= {\rm  Aut} (P^{\bullet}).$$
 	 The previous results  lead to a natural definition (Definition \ref{defn:MorNG})  of the set ${\rm  Aut}({ {P^{\bullet}} /_G}  )$  of morphisms of the nerve  of  the partial quotient groupoid ${ {P^{(1)}} /_G}$  in terms of the group ${\rm  Aut}(P^{\bullet})$ and by Theorem \ref{thm:MorNGP2} we have the following group isomorphism:
 	\[ {\rm  Aut} ( P^{\bullet} /_G)\simeq 
 	\CI_{H^\bullet\rtimes G}( {P^{\bullet}}, H^\bullet).\]
Then, specialising to case of the (trivial base) PB  groupoid $P^{(1)}=\Psi(B)$ built from a bundle gerbe $B$ as in Proposition \ref{prop:grb.to.pb}. In this case and $B\simeq P^{(1)}/_G$ which yields a decription of the automorphim group of a bundle gerbe (Corollary \ref{cor:AutBk}):
\[{\rm  Aut}({B^\bullet})\simeq{\rm  Aut}(P^{\bullet}/_G)\simeq C^{\infty}_{H^\bullet\rtimes G} ({P^\bullet}, H^\bullet).\]

We end this paper with an example, specialising the above to  the pair groupoids $P^{(1)}:={\CP^{(1)}} (P)$, resp.  $M^{(1)}:={\CP^{(1)}} (M)$  of a principal bundle $G\curvearrowright P\to M$.   In that case $H=G$ and the partial quotient ${ {P^{(1)}} /_G} ={\CG^{(1)}}(P)$ is the gauge groupoid of $P$, leading to Corollary \ref{cor:GaugeNGP} which shows the group isomorphism: 
	 \[{\rm  Aut}({\CG^\bullet}(P))\simeq \CI_{G^\bullet\rtimes G}({\CP^\bullet}(P), G^\bullet),\]
	
and a cannonical embeddings between the various groups as	in Theorem \ref{thm:embeddingsgaugetransf}:
	\[ {\rm Aut}_{M} (P)  \overset{ \hat{}}{\hookrightarrow} {\rm Aut} ({\CP^{(1)}}(P)) \overset{ N^\bullet}{\hookrightarrow}   {\rm Aut} \left(  {\CP^\bullet}(P) \right) \simeq \CI_{G^\bullet\rtimes G} ( \CP^\bullet (P), G^\bullet\rtimes G). \]

\end{itemize}

With PB groupoids at hand, the next step would be to equip them with (infinitesimal) connections, which is the object of ongoing work by the first author.

\subsection*{Acknowledgments}
{We are very grateful to Sara Azzali for many fruitful discussions at various stages of the paper. We were also inspired by  prior conversations on higher gauge theory with Alessandra Frabetti, whom we would also like to thank.}

This work is supported by the Spanish State Research Agency, through the Severo Ochoa and María de Maeztu Program for Centers and Units of Excellence in R\&D (CEX2020-001084-M)
 
\section{Principal bundle groupoids} \label{sec:PBgpd}

{ In this section, after some necessary prerequisites, we define principal bundle groupoids.}\\

Let us recall that a groupoid is a small category $\CM$ in the category  SETS of sets such that every arrow is invertible. This means that the arrows $M^{(1)}:={\rm Arr}(\CM)$ (also called morphisms for a big category) and the points $M:={\rm Ob}(\CM)$ (also called objects for a big category) are both sets and the correspondences:

\begin{itemize}
	\item source and target $\bs:M^{(1)}\fto M$ and $\bt:M^{(1)}\fto M$,
	\item composition $\circ\colon M^{(1)} {}_\bs\!\times_\bt M^{(1)} \fto M^{(1)}$,
	\item  identity and inverse $e:M\fto M^{(1)}$ and $(-)^{-1}:M^{(1)}\fto M^{(1)}$,
\end{itemize}
are all maps (morphisms in SETS).\\

A Lie groupoid is a groupoid $\CM$ in the   category of manifolds. That means that $M^{(1)}$, $M$ are manifolds and $(\bs,\bt,\circ,e,(-)^{-1})$ are smooth maps (the condition of $\bs,\bt$ being submersion is required to make sense for $\circ$ being smooth). In the rest of this paper we will denote groupoids as $M^{(1)}\soutar M$, the double arrows stands for the source and target maps.\\

{In this section, we introduce groupoids in the category of principal bundles, which we call  {\bf  principal bundle groupoids} or equivalently, for a Lie 2-group $G^{(1)}$, a principal ${G^{(1)}}$-bundle groupoid or a principal bundle groupoid with structure group ${G^{(1)}}$. 
	
	\subsection{Review of Lie $2$-groups}

	\begin{defn}\label{defn:Lie2group}
		A {\bf  Lie 2-group} is a groupoid in the category of Lie groups. This is a small category  whose space of arrows $G^{(1)}$ and space of points $G$ are both Lie groups and whose groupoid  structure  maps   $(\bs,\bt,\circ,e, (-)^{-1})$ are Lie  group morphisms (homomorphisms).
	\end{defn}
	\begin{ex}	Any Lie group $G_0$ is a Lie 2-group where $G^{(1)}:=G_0$, $G:=G_0$, the group structure is given by $G_0$ and the groupoid structure is given by: $\bs=\bt=e=(-)^{-1}=\circ=I_G$. The tangent space $TG$ is also a Lie $2$-group (more details in Example \ref{ex:TG}).
	\end{ex}
	Since the category of Lie groups is included in the category of smooth manifolds,  any  Lie 2-group is a Lie groupoid .
	
	\begin{defn}\label{defn:cross module}
		A {\bf  (smooth) crossed module over the category of groups} is a pair of Lie groups $(H,G)$ together with  an  action (by homomorphisms) $C\colon G\ni g\mapsto C_g\in  {\rm Aut}(H)$ of $G$ on $H$ and a map $d:H\fto G$ { which defines an action (by diffeomorphisms) of $H$ on $G$ by $(h,g)\mapsto d(h)\cdot g$, with compatibility conditions between the two maps $C$ and $d$:  }
		\begin{eqnarray}\label{eq:crossedmodule}
			\bullet\,\, {d}(C_g h)= g \, d(h)\, g^{-1} \,\, ,&\hspace{1in} & \bullet \,\, C_{d(h)}h'=h h' h^{-1}\,, \nonumber
		\end{eqnarray}
		for all $g$ in $G$ and $h,h'$ in $H$.
	\end{defn} 
	\begin{rk}
		As we shall see, $d$ defines a groupoid structure, $C$ a group structure.
	\end{rk}
	There is a  categorical equivalence between (smooth) crossed modules over groups--a notion initially introduced by Whitehead in \cite{W1, W2}-- and (Lie) $2$-groups, also  called  (Lie)  group groupoids, see \cite{BS}, { \cite{BH}, \cite{BL}}. Let us recall this equivalence in terms of semi-direct products of groups.
	
	\begin{prop} \label{prop:cr.to.grpd}(Lie $2$-group of a crossed module) Let $(H,G,C,d)$ be a (smooth) crossed module over the category of groups.
		\begin{enumerate}
			\item   $H\times G$ is the set of arrows of a Lie groupoid over $G$ with   source and target given by $\bt(h,g):=d(h)g$, $\bs(h,g):=g$ and the groupoid composition defined as:
			$$\textnormal{\bf Groupoid Str: }  \left( h_2,{\bt( h_1, g_1)}\right)\circ (h_1,g_1):=(h_2 h_1, g_1).$$
			\item The groupoid $H\times G\soutar G$, as in item 1. above, is a Lie $2$-group with the semi-direct product:
			$$\textnormal{\bf Group Str: } (h_2,g_2)\cdot (h_1,g_1):=(h_2\,C_{g_2}h_1  , g_2 g_1),$$
			as the product on the arrows and the natural product in $G$ on the points.
		\end{enumerate}
	\end{prop}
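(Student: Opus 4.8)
The plan is to treat the two items separately: in item~1 I verify the groupoid axioms for $(\bs,\bt,\circ,e,(-)^{-1})$, and in item~2 I check that, once $H\times G$ carries the semidirect product, each of these structure maps is a group homomorphism. The two defining relations of the crossed module, $d(C_g h)=g\,d(h)\,g^{-1}$ and the Peiffer identity $C_{d(h)}h'=h h' h^{-1}$, are exactly what make the axioms hold. Smoothness of every map is immediate, since each is assembled from $d$, $C$ and the group operations of $H$ and $G$, so I would not dwell on it.

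For item~1, I would first identify the composable pairs: as $\bs(h_2,g_2)=g_2$ and $\bt(h_1,g_1)=d(h_1)g_1$, a pair $\big((h_2,g_2),(h_1,g_1)\big)$ is composable precisely when $g_2=d(h_1)g_1$, which is how the composition is written. I would then take the unit at $g$ to be $(e_H,g)$ and the inverse of $(h,g)$ to be $(h^{-1},d(h)g)$, and check: that the source and target of a composite match those of the inner and outer arrows, where $\bt(h_2h_1,g_1)=d(h_2)d(h_1)g_1=\bt(h_2,g_2)$ uses that $d$ is a homomorphism; that $(e_H,g)$ acts as a two-sided identity; and that the proposed inverse composes to the units on both sides, using $d(h^{-1}h)=e_G$. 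Associativity of $\circ$ reduces to associativity of the product in $H$.

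For item~2, I would first record that the semidirect product $(h_2,g_2)\cdot(h_1,g_1)=(h_2\,C_{g_2}h_1,g_2g_1)$ is a group, with unit $(e_H,e_G)$ and inverse $(C_{g^{-1}}h^{-1},g^{-1})$, and then test the homomorphism property map by map. The source is the projection onto $G$, hence a homomorphism; the identity $g\mapsto(e_H,g)$ is one because $C_g e_H=e_H$; and the target is a homomorphism exactly by $d(C_g h)=g\,d(h)\,g^{-1}$, which collapses $d(h_2\,C_{g_2}h_1)\,g_2g_1$ into $\big(d(h_2)g_2\big)\big(d(h_1)g_1\big)$.

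The main obstacle I expect is showing that the composition $\circ$, regarded as a map on the fibre product group $(H\times G){}_\bs\!\times_\bt(H\times G)$, is a homomorphism. Here I would take two composable pairs and first confirm that their product in the fibre product group is again composable, a step that itself needs $d(C_g h)=g\,d(h)\,g^{-1}$. Comparing $\circ$ applied to the product with the product of the two composites then reduces, after cancelling common factors, to the single relation $C_{g_2}h_2'=h_1\,(C_{g_1}h_2')\,h_1^{-1}$ under the constraint $g_2=d(h_1)g_1$; writing $C_{g_2}=C_{d(h_1)}\circ C_{g_1}$ and invoking the Peiffer identity $C_{d(h_1)}x=h_1 x h_1^{-1}$ finishes it. The same Peiffer identity, together with the first relation, handles the remaining map, the groupoid inverse. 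This is the only place where both crossed-module axioms are genuinely required and where the computation is least mechanical, so I regard it as the crux of the proof.
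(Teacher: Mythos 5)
Your proposal is correct. Note, however, that the paper offers no proof of this proposition at all: it is recalled as part of the standard equivalence between crossed modules and Lie $2$-groups, with the burden shifted to the cited literature (\cite{BS}, \cite{BH}, \cite{BL}), so there is no argument in the text to compare yours against. Your direct verification is the standard one and is complete: item~1 needs only that $d$ is a homomorphism (for $\bt$ of a composite, and for the inverse $(h^{-1},d(h)g)$), while all genuine use of the crossed-module axioms is concentrated where you say it is. In particular, your two key computations check out: composability is preserved under the semidirect product because $d(h_1\,C_{g_1}h_1')\,g_1g_1' = d(h_1)\,g_1\,d(h_1')\,g_1' = g_2g_2'$ via $d(C_gh)=g\,d(h)\,g^{-1}$, and the interchange law collapses to $C_{g_2}h_2' = h_1\,(C_{g_1}h_2')\,h_1^{-1}$, which is exactly $C_{d(h_1)}\circ C_{g_1}$ plus the Peiffer identity. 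One small refinement: for the inverse map $(h,g)\mapsto(h^{-1},d(h)g)$, the homomorphism check $\left(C_{g_2}h_1^{-1}\right)h_2^{-1} = h_2^{-1}\,C_{d(h_2)g_2}h_1^{-1}$ in fact uses only the Peiffer identity, not the first relation as well — a harmless overstatement on your part. The verification that this is the proof pattern the paper's Proposition \ref{prop:grpd.to.cr} inverts (there the paper does give a short computation) confirms your account is consistent with the paper's conventions for $\bs$, $\bt$, $e$ and the semidirect product.
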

	\begin{prop}\label{prop:grpd.to.cr} (Crossed module of a Lie $2$-group) Let $G^{(1)}\soutar G$ be a Lie $2$-group and $H:=\ker(\bs)$. 
		\begin{itemize}
			\item The group $G$ acts on $H$ via homomorphisms { $C_gh:={e(g)\, h\, e(g)^{-1}}\in H$ for all $g\in G $};
			\item $(H,G, C,d:= \bt|_H)$ is a (smooth) crossed module in the category of groups.
		\end{itemize}
	\end{prop}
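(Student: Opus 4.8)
The plan is to read off each datum of the crossed module from the Lie $2$-group structure and then to check the axioms of Definition \ref{defn:cross module} one at a time, isolating the Peiffer identity as the only genuinely nontrivial point. Throughout I write $1_G$ for the unit of $G$ and $1:=e(1_G)$ for the unit of $G^{(1)}$ (these agree because $e$ is a homomorphism), and I use repeatedly the groupoid unit identities $\bs\circ e=\bt\circ e=\mathrm{id}_G$, $\,e(\bt(a))\circ a=a$, $\,a\circ e(\bs(a))=a$, together with the fact that $\bs,\bt,e,\circ,(-)^{-1}$ are all group homomorphisms. Since $\bs\colon G^{(1)}\to G$ is a Lie group homomorphism, its kernel $H=\ker(\bs)$ is a closed embedded normal Lie subgroup of $G^{(1)}$, hence a Lie group; this settles the manifold/smoothness side of every subsequent construction.

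First I would dispose of the axioms that use only that $\bs,\bt,e$ are homomorphisms. For $g\in G$ and $h\in H$, applying $\bs$ to $C_gh=e(g)\,h\,e(g)^{-1}$ gives $\bs(C_gh)=g\,\bs(h)\,g^{-1}=g\,1_G\,g^{-1}=1_G$, so $C_gh\in H$; each $C_g$ is conjugation by $e(g)$ restricted to the normal subgroup $H$, hence an automorphism, and $C_{g_1g_2}=C_{g_1}C_{g_2}$ since $e(g_1g_2)=e(g_1)e(g_2)$, so $C\colon G\to\mathrm{Aut}(H)$ is a smooth action by homomorphisms. The map $d:=\bt|_H$ is a homomorphism as a restriction of $\bt$, and the assignment $(h,g)\mapsto d(h)\,g$ is automatically a smooth $H$-action on $G$ by left translations. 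The first compatibility is then immediate from $\bt\circ e=\mathrm{id}_G$ and $\bt$ being a homomorphism: $d(C_gh)=\bt\big(e(g)\,h\,e(g)^{-1}\big)=\bt(e(g))\,\bt(h)\,\bt(e(g))^{-1}=g\,d(h)\,g^{-1}$.

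The hard part is the Peiffer identity $C_{d(h)}h'=h\,h'\,h^{-1}$, and this is precisely where the hypothesis that the composition $\circ$ is a homomorphism is essential. The key tool is the interchange law: the fibre product $G^{(1)}{}_\bs\!\times_\bt G^{(1)}$ is a subgroup of $G^{(1)}\times G^{(1)}$ with componentwise multiplication, and $\circ$ is a homomorphism out of it, so for $\circ$-composable columns
\[ (a_2b_2)\circ(a_1b_1)=(a_2\circ a_1)(b_2\circ b_1), \]
and likewise with three factors. I would then feed in the three columns $(a_2,a_1)=(e(d(h)),h)$, $(b_2,b_1)=(h',1)$ and $(c_2,c_1)=(e(d(h))^{-1},h^{-1})$. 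Each column is $\circ$-composable, since $\bs(e(d(h)))=d(h)=\bt(h)$, $\,\bs(h')=1_G=\bt(1)$ and $\bs(e(d(h))^{-1})=d(h)^{-1}=\bt(h^{-1})$, and each vertical composite collapses by the unit law to $a_2\circ a_1=h$, $b_2\circ b_1=h'$, $c_2\circ c_1=h^{-1}$. Hence the right-hand side of the (triple) interchange law is $h\,h'\,h^{-1}$. The left-hand side is $\big(e(d(h))\,h'\,e(d(h))^{-1}\big)\circ\big(h\cdot 1\cdot h^{-1}\big)=C_{d(h)}(h')\circ 1$, and since $C_{d(h)}(h')\in H$ has source $1_G$, the unit law gives $C_{d(h)}(h')\circ 1=C_{d(h)}(h')$. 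Equating the two sides yields $C_{d(h)}h'=h\,h'\,h^{-1}$, which is exactly Peiffer.

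Finally I would record that all the maps involved ($C$, $d$, and the two actions) are smooth because they are assembled from the smooth structure maps $\bs,\bt,e$ and the smooth group operations of $G$ and $G^{(1)}$, so that $(H,G,C,d)$ is a genuine smooth crossed module. The only delicate step is the Peiffer identity above; everything else follows directly from the various structure maps being group homomorphisms, and I expect this triple-column interchange computation to be the main obstacle to present cleanly.
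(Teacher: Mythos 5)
Your proof is correct, and its key mechanism coincides with the paper's: both derive the Peiffer identity from the interchange law, i.e.\ from the hypothesis that the composition $\circ$ is a group homomorphism on the subgroup $G^{(1)}{}_\bs\!\times_\bt G^{(1)}\subset G^{(1)}\times G^{(1)}$. The difference is in the execution. The paper runs a two-column interchange computation in semidirect-product coordinates, writing elements of $G^{(1)}$ as pairs $(h,g)$ (standing for $h\,e(g)$) and invoking the product and composition formulas of Proposition \ref{prop:cr.to.grpd}; strictly speaking that notation is only legitimised by the isomorphism $\varphi$ of Proposition \ref{sec:Lie-2g}, which appears \emph{after} the statement being proved, so the paper's argument carries an implicit forward reference. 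Your three-column computation with the composable pairs $(e(d(h)),h)$, $(h',1)$, $(e(d(h))^{-1},h^{-1})$ works intrinsically in $G^{(1)}$ and is self-contained; the collapse of each column by the unit laws and the identification $C_{d(h)}(h')\circ 1=C_{d(h)}(h')$ are all verified correctly. You also spell out what the paper dispatches with ``clearly'' or leaves tacit: that each $C_g$ is an automorphism of $H$ and $C$ a homomorphic action (via $e(g_1g_2)=e(g_1)e(g_2)$ and normality of $H=\ker(\bs)$ in $G^{(1)}$), the first crossed-module axiom $d(C_gh)=g\,d(h)\,g^{-1}$ from $\bt\circ e=\mathrm{id}_G$, and the smoothness of all the data from $H$ being a closed Lie subgroup. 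Both arguments are sound; yours buys self-containedness and completeness at the cost of a slightly longer verification, while the paper's coordinate version is shorter but presupposes the semidirect-product presentation.
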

	\begin{proof}
		Note that $\bs\left(C_gh\right)=\bs\left(e(g)\right)\cdot \bs(h)\cdot \bs\left(  e(g)\right)^{-1}=\bs\left(e(g)\right) \cdot \bs\left(  e(g)\right)^{-1} = 1$ which implies that $C_g(h)$ lies in $H={\rm Ker}(\bs)$.
		
		Also, clearly, we have $t(C_gh)= e(g) t(h) e(g)^{-1}$ and 
			\begin{eqnarray*}
				\left(C_{t(h)} h', t(h)\right)=(e, t(h))\cdot (h',e)&=&\left((h,e)\circ (h^{-1}, t(h))\right)\cdot \left((h', e)\circ (e,e)\right)\\
				&=& (hh', e)\circ (h^{-1}, t(h))=(h h'h^{-1}, t(h))
			\end{eqnarray*} 
		so that Conditions (\ref{eq:crossedmodule}) are verified. \end{proof}
	
	\begin{prop}\label{sec:Lie-2g}{  (see e.g.  \cite{BL}  and references therein)}
		Let $G^{(1)}\soutar G$ be a  Lie 2-group, $(H,G, C, t|_H)$ the crossed module of Prop. \ref{prop:grpd.to.cr} and $H\times G \soutar G$ the Lie $2$-group of Prop. \ref{prop:cr.to.grpd}. Then the map:
		$$\varphi\colon H\rtimes G \fto G^{(1)};(h,g)\mapsto h \, e(g)$$
		is a Lie $2$-group isomorphism (a group and Lie groupoid isomorphism).
	\end{prop}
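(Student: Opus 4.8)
The plan is to verify in turn that $\varphi\colon H\rtimes G\fto G^{(1)}$, $(h,g)\mapsto h\, e(g)$, is a homomorphism of groups, a diffeomorphism, and a morphism of Lie groupoids covering the identity on the common base $G$; together these three facts say exactly that $\varphi$ is a Lie $2$-group isomorphism. Throughout I would freely use that, by Definition \ref{defn:Lie2group}, the structure maps $\bs,\bt,e,(-)^{-1}$ and the composition $\circ$ of $G^{(1)}$ are all group homomorphisms, and that by Proposition \ref{prop:grpd.to.cr} we have $C_g h=e(g)\, h\, e(g)^{-1}$ and $d=\bt|_H$.

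First I would check the group homomorphism property. Using that $e$ is a homomorphism and the formula for $C$, one computes
\[
\varphi\big((h_2,g_2)\cdot(h_1,g_1)\big)=\big(h_2\,C_{g_2}h_1\big)\,e(g_2g_1)=h_2\,e(g_2)\,h_1\,e(g_2)^{-1}\,e(g_2)\,e(g_1)=\varphi(h_2,g_2)\,\varphi(h_1,g_1).
\]
Bijectivity together with smoothness of the inverse I would obtain by exhibiting the explicit map $\gamma\mapsto\big(\gamma\, e(\bs\gamma)^{-1},\,\bs\gamma\big)$; since $\bs$ is a homomorphism with $\bs\circ e=\mathrm{id}_G$, the first component lies in $H=\ker\bs$, and the two maps are visibly inverse to one another and smooth, being assembled from the smooth operations $\bs$, $e$, and the group multiplication and inversion of $G^{(1)}$.

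It remains to check that $\varphi$ is a groupoid morphism over $\mathrm{id}_G$. Compatibility with source, target and units is immediate: $\bs(h\,e(g))=g$, $\bt(h\,e(g))=\bt(h)\,g=d(h)\,g$, and $\varphi(1,g)=e(g)$, which match the source, target and unit of $H\rtimes G$ from Proposition \ref{prop:cr.to.grpd}. The only substantial point, and the one I expect to be the main obstacle, is compatibility with composition. The key is the identity
\[
a\circ b=a\,e(\bs a)^{-1}\,b\qquad\text{whenever }\bs a=\bt b,
\]
valid in any Lie $2$-group, which I would derive from the interchange law, i.e. from the fact that $\circ$ is a group homomorphism on $G^{(1)}{}_\bs\!\times_\bt G^{(1)}$. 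Writing $a=e(\bs a)\cdot\big(e(\bs a)^{-1}a\big)$ and $b=e(\bt b)\cdot\big(e(\bt b)^{-1}b\big)$ and applying interchange reduces the claim to the sub-lemma that $v\circ u=v\cdot u$ for $u\in\ker\bt$ and $v\in\ker\bs$ with $\bt u=\bs v=1$; this in turn follows from interchange by factoring $v=v\cdot 1$ and $u=1\cdot u$ through the unit $e(1)=1_{G^{(1)}}$. Granting this identity, composition compatibility is a one-line computation: with $a=\varphi(h_2,d(h_1)g_1)=h_2\,e(d(h_1)g_1)$ and $b=\varphi(h_1,g_1)=h_1\,e(g_1)$ one has $\bs a=d(h_1)g_1=\bt b$, whence
\[
a\circ b=a\,e(d(h_1)g_1)^{-1}\,b=h_2\,h_1\,e(g_1)=\varphi\big((h_2,d(h_1)g_1)\circ(h_1,g_1)\big),
\]
using the composition law of Proposition \ref{prop:cr.to.grpd}. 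This establishes that $\varphi$ is simultaneously a group and a Lie groupoid isomorphism, as claimed.
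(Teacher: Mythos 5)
Your proof is correct. Note that the paper itself gives no written proof of this proposition: it cites the literature and, in the remark that follows, only observes that the group isomorphism $\ker(\bs)\rtimes G\cong G^{(1)}$ is the standard splitting construction for a group homomorphism $\bs$ with section $e$. Your argument supplies exactly the details the paper leaves implicit, and does so along the standard lines of the cited references: the semidirect-product computation $h_2\,e(g_2)\,h_1\,e(g_2)^{-1}\,e(g_2)\,e(g_1)$ for the group homomorphism property, the explicit inverse $\gamma\mapsto\bigl(\gamma\,e(\bs\gamma)^{-1},\bs\gamma\bigr)$, and — the only genuinely nontrivial step — compatibility with $\circ$ via the identity $a\circ b=a\,e(\bs a)^{-1}\,b$ for $\bs a=\bt b$, which you correctly derive from the interchange law (the fact that $\circ\colon G^{(1)}{}_\bs\!\times_\bt G^{(1)}\to G^{(1)}$ is a group homomorphism, as required by Definition \ref{defn:Lie2group}) through the Eckmann--Hilton-type reduction $v\circ u=v\cdot u$ for $v\in\ker\bs$, $u\in\ker\bt$. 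All composability checks in your factorisations ($\bs v=1=\bt u$, and the pairs $(v,e(1))$, $(e(1),u)$) are valid, and source, target and units are verified, so inverses are preserved automatically; the proof is complete.
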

	
	\begin{rk}
		The isomorphism $\varphi$ is given by a known construction. If $\bs:G^{(1)}\fto G$ is a homomorphism with splitting $e:G\fto G^{(1)}$ there is an isomorphism $\varphi\colon \ker(s)\rtimes G\fto G^{(1)}$.
	\end{rk}
	
	Using the above identifications, we  write any  Lie 2-group as $H\rtimes_C G\soutar G$ where $G,H$ are Lie groups.  In the sequel, for the sake of simplicity, we shall often drop the subscript $C$.

\begin{ex}
	Let $H$ be any normal subgroup of a Lie group $G$. The conjugate action $C\colon g\mapsto \left(C_g\colon h\mapsto g hg^{-1}\right)$   of $G$  on $H$ and the inclusion map $d\colon H\fto G$  define a crossed module and therefore a Lie 2-group $H\rtimes G\soutar G$. When $H=G$ one gets a Lie 2-group $G\rtimes G\soutar G$, which is isomorphic to the pair groupoid of $G$.
\end{ex}
\begin{ex}\label{ex:TG}
	Let $G$ be a Lie group and $V$ a vector space ($V$ is also a Lie group with the sum) carrying a	representation $C\colon  G\to {\rm Aut}(V)$ of $G$. The action  combined with the map $d\colon  V\to G$, which sends $v$ to $e_G$ defines a crossed module and hence a Lie 2-group  $V\rtimes_C G\soutar G$. 
	
	The above applied to $V={\mathfrak g}$ and $C$ the adjoint action  Ad of $G$ on $\mathfrak g$ equips $ \g\rtimes_{\rm Ad}G $ with a Lie $2$-group structure. 	Since the tangent bundle $TG$ of any Lie group $G$ is isomorphic to $\g\rtimes_{\rm Ad} G$, $TG$ inherits  a Lie $2$-group structure.
\end{ex}
\begin{ex}
	 The action of ${\rm O}(2k)$ on the Clifford algebra ${\rm Cliff}(2k)$ gives an action of ${\rm O}(2k)$ in ${\rm Pin}(2k)$ by homeomorphisms. The mentioned action and the covering projection $d\colon {\rm Pin}(2k)\fto {\rm O}(2k)$ defines a crossed module and therefore a Lie $2$-group ${\rm Pin}(2k)\rtimes {\rm O}(2k) \soutar {\rm O}(2k)$.
\end{ex}

\subsection{From Lie $2$-group actions to principal bundle groupoids}

{ In this paragraph we define principal Lie $2$-group bundle groupoids which involve  group actions.}

Let us now recall the action  of a Lie $2$-group  on another Lie groupoid.

\begin{defn}\label{defn:Lie2geoupaction}
	A \textbf{Lie 2-group action} of a Lie 2-group $(H\rtimes G){\soutar}G$ on a Lie groupoid $P^{(1)}\soutar P$ consists of   Lie {group} actions of $H\rtimes G$ on the manifold $P^{(1)}$ and of $G$ on the manifold $P$ such that the action map
	
	{ 	\[\begin{tikzcd}
			(H\rtimes G) \times P^{(1)}  \ar[d, shift right=.2em, swap,"t\times t"]\ar[d,shift left=.2em,"s\times s"] \ar[r] & P^{(1)} \ar[d, shift right=.2em, swap,"t"]\ar[d,shift left=.2em,"s"] \\
			G\times P \ar[r] & P
		\end{tikzcd}\]}
	is a Lie groupoid map from the cartesian product groupoid $(H\rtimes G)\times P^{(1)}$ to the Lie groupoid $P^{(1)}$.
\end{defn}
\begin{ex}\label{eq:H=e}
When $H=	\{e_H\}$, the group action of the Lie $2$-group   $H\rtimes G\simeq G$ on $P^{(1)}$ boils down to the mere  group action of $G$.
\end{ex}

Just as not every Lie  group $G$ acting on a manifold $P$ yields a quotient manifold $P/G$,    the action of a Lie $2$-group $H\rtimes G$ on a Lie groupoid ${P^{(1)}}$ does not {  always} give rise to a Lie groupoid structure on the quotient $P^{(1)}/(H\rtimes G)$. Yet, {   as in the case of Lie group actions on manifolds,} it does under a freeness and properness assumption on the action.

\begin{prop}\label{prop:lie2grp.q}
	{\cite[\S 5.2 and references therein]{G}} 
	{Consider  a free and proper right action of a  Lie 2-group $ (H\rtimes G)\soutar G$  } on a Lie groupoid $P^{(1)}\soutar P$. 
	Define the equivalence relations
	$$R:=\{(p,gp)\in P\times P \st p\in P \y g\in G\} \textnormal{ on } P,$$  
	$$R^{(1)}:=\{(\xi,(h,g)\,\xi)\in P^{(1)}\times P^{(1)} \st \xi\in P^{(1)} \y (h,g)\in H\rtimes G\} \textnormal{ on } P^{(1)}.$$

	Then $M^{(1)}:= P^{(1)}/R^{(1)}$ and $M:= P/R$ are manifolds, and $M^{(1)}\soutar M$ acquires a canonical  Lie groupoid structure making the quotient map a Lie groupoid fibration.
\end{prop}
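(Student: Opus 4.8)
The plan is to transport the Lie groupoid structure of $P^{(1)}\soutar P$ down to the two quotients, relying on the hypothesis that the action map is a Lie groupoid morphism to make every structure map descend. I would begin with the smooth structures. Since the $H\rtimes G$-action on $P^{(1)}$ is free and proper, the quotient manifold theorem (Godement criterion) gives that $M^{(1)}=P^{(1)}/(H\rtimes G)$ is a manifold and that $\pi^{(1)}\colon P^{(1)}\fto M^{(1)}$ is a principal $(H\rtimes G)$-bundle. I would then observe that the $G$-action on $P$ is automatically free and proper: using the unit identity recorded below, the unit map $e$ is an equivariant closed embedding intertwining the $G$-action on $P$ with the $\{e_H\}\rtimes G$-action on the invariant closed submanifold $e(P)\subset P^{(1)}$, so freeness and properness restrict from the top action. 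Hence $M=P/G$ is a manifold and $\pi\colon P\fto M$ a principal $G$-bundle.

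Next I would record the equivariance identities forced by Definition~\ref{defn:Lie2geoupaction}. Compatibility of the action with $\bs$, $\bt$ and units in the Lie $2$-group (where $\bs(h,g)=g$, $\bt(h,g)=d(h)g$, and the unit over $g$ is $(e_H,g)$) reads
\[
\bs\bigl((h,g)\,\xi\bigr)=g\cdot\bs(\xi),\qquad \bt\bigl((h,g)\,\xi\bigr)=\bigl(d(h)g\bigr)\cdot\bt(\xi),\qquad e(g\cdot p)=(e_H,g)\cdot e(p).
\]
These show that $\bs,\bt$ carry $R^{(1)}$-orbits into $R$-orbits and $e$ carries $R$-orbits into $R^{(1)}$-orbits, so $\bar\bs,\bar\bt\colon M^{(1)}\fto M$ and $\bar e\colon M\fto M^{(1)}$ descend to the quotients; the inverse descends in the same way. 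Smoothness of the descended maps is automatic because $\pi,\pi^{(1)}$ are surjective submersions and the maps upstairs are smooth, and $\bar\bs,\bar\bt$ remain submersions since $\bs,\bt$ are.

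The heart of the proof, and the step I expect to be the main obstacle, is the composition. Given classes with $\bar\bs[\xi_2]=\bar\bt[\xi_1]$, freeness of the $G$-action on $P$ yields a \emph{unique} $g\in G$ with $\bs(\xi_2)=g\cdot\bt(\xi_1)$; by the target identity the arrows $\xi_2$ and $(e_H,g)\,\xi_1$ are composable in $P^{(1)}$, and I would set $[\xi_2]\circ[\xi_1]:=[\,\xi_2\circ\bigl((e_H,g)\,\xi_1\bigr)]$. For independence of the representatives, replacing $\xi_i$ by $(h_i,g_i)\,\xi_i$ recomputes the connecting element and, using the multiplicativity of the action on composable pairs together with the crossed-module relation $d(C_{g'}h)=g'\,d(h)\,(g')^{-1}$, one checks that the new product lands in the same $R^{(1)}$-orbit, so the class is unchanged. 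For smoothness I would express $g$ through the smooth division map of the principal $G$-bundle $\pi$, which makes $(\xi_2,\xi_1)\mapsto \xi_2\circ\bigl((e_H,g)\,\xi_1\bigr)$ a smooth map on the fibre product $\{(\xi_2,\xi_1):\pi(\bs(\xi_2))=\pi(\bt(\xi_1))\}$; since $\pi^{(1)}\times\pi^{(1)}$ restricts there to a surjective submersion onto the composable pairs of $M^{(1)}$, the composition descends smoothly.

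Finally I would verify the groupoid axioms for $M^{(1)}\soutar M$ and the fibration property. Associativity together with the unit and inverse laws descend for free: each is an identity of smooth maps holding for $P^{(1)}\soutar P$, and since the quotient projections are surjective, precomposition with them shows the corresponding identity downstairs. That $(\pi^{(1)},\pi)$ is a Lie groupoid fibration reduces to checking that the canonical map $P^{(1)}\fto P\,{}_{\pi}\!\times_{\bar\bs}M^{(1)}$, $\xi\mapsto(\bs(\xi),\pi^{(1)}(\xi))$, is a surjective submersion: given $(p,[\eta])$ with $\pi(p)=\bar\bs[\eta]$, one writes $p=g\cdot\bs(\eta)$ for the unique such $g$ and verifies that $(e_H,g)\,\eta$ is a smooth local preimage, which follows from the principal bundle structures already established.
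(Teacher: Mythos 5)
Your proposal is correct, but there is no internal proof to measure it against: the paper states Proposition \ref{prop:lie2grp.q} with a citation to \cite[\S 5.2]{G} and gives no argument of its own, so your write-up is a self-contained substitute rather than a variant of the paper's route. The skeleton is the standard one (Godement quotients for the free and proper actions, then descent of all structure maps using the equivariance identities forced by Definition \ref{defn:Lie2geoupaction}), and the crux --- well-definedness of the descended composition --- is handled correctly: with $\xi_i'=(h_i,g_i)\,\xi_i$ the corrected connecting element is $g'=g_2\,g\,g_1^{-1}d(h_1)^{-1}$, and the pair $(h_2,g_2)$, $\bigl(C_{g'}h_1,\,g'g_1g^{-1}\bigr)$ is composable in $H\rtimes G$ precisely because $d(C_{g'}h_1)=g'\,d(h_1)\,g'^{-1}$, after which multiplicativity of the action on composable pairs places the two candidate products in one $R^{(1)}$-orbit; your smoothness argument via the division map of $\pi\colon P\to M$ and the surjective submersion onto composable pairs of $M^{(1)}$ is also sound. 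Two points deserve flagging. First, your derivation of freeness and properness of the $G$-action on $P$ from the top action is a genuine strengthening of the most natural reading of the hypothesis (that both actions are free and proper); it silently uses that $e(P)$ is closed in $P^{(1)}$, i.e. Hausdorffness of the arrow space, which is the standing convention here but worth stating. Second, ``associativity descends for free'' is slightly glossed: the descended composition is not literally $\pi^{(1)}$ post-composed with $\circ$, so you should first observe that composable pairs and triples of classes admit genuinely composable representatives upstairs (act by $(e_H,g)$ exactly as in your construction of $\circ$), and that the projection from composable tuples in $P^{(1)}$ onto composable tuples in $M^{(1)}$ is surjective --- which your fibre-product argument already yields; with that remark the transport of the groupoid axioms is complete.
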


Let us recall that a Lie groupoid fibration is a Lie groupoid morphism $\pi^{(1)}:P^{(1)}\fto M^{(1)}$ such that the map $\pi_s\colon P^{(1)}\fto M^{(1)} {}_\bs \!\times_\pi P;\phi\mapsto(\pi^{(1)}(\phi),\bs(\phi))$ is a surjective submersion. This means that given an arrow $\gamma\in M^{(1)}$ and a point $p\in P$, over the source of $\gamma$, there is a way to lift smoothly $\gamma$ to an arrow in $P^{(1)}$ starting in $p$.  The interested reader can find further details in \cite{MK}, { see \cite{CZ} for a specific  instance of the above proposition.

{ On the grounds of Proposition \ref{prop:lie2grp.q}, we   set the following definition, which compares with \cite[\S 2.8. Categorical principal bundles]{CLS} in the category of Lie groupoids.
	\begin{defn} \label{defn:Lie3pb}
		A {\bf principal $(H\rtimes G)$ -bundle groupoid} ($P^{(1)}\soutar P$) over a Lie groupoid $M^{(1)}\soutar M$ {consists of} a free and proper {{\it group}} action of the  Lie 2-group $(H\rtimes G)\soutar G$ on a Lie groupoid $P^{(1)}\soutar P$ whose quotient is isomorphic to $M^{(1)}\soutar M$, corresponding to the following diagramme: 
		
		\[\begin{tikzcd}
			(H\rtimes G)\times P^{(1)} \ar[d, shift right=.2em, swap,"t\times t"]\ar[d,shift left=.2em,"s\times s"] \ar[r] & P^{(1)} \ar[d, shift right=.2em, swap,"t"]\ar[d,shift left=.2em,"s"] \ar[r] & M^{(1)} \ar[d, shift right=.2em, swap,"t"]\ar[d,shift left=.2em,"s"]\\
			G\times	P \ar[r] & P \ar[r] & M
		\end{tikzcd}\]

		We shall write for short $(H\rtimes G) \curvearrowright {P^{(1)}}\to {M^{(1)}}$.
	\end{defn}
	\begin{rk}\label{prop:PB groupoidcat}  Principal bundles groupoids are groupoids in the category of principal bundles. 
	\end{rk}
	
	For examples related to the holonomy groupoid of a singular foliation we refer to \cite{GZ19}.

\begin{ex}\label{ex:PBgauge0}
	Let $P\to M$ be a principal $G$-bundle. On the pair groupoid ${\CP^{(1)}}(P)$, the diagonal action  $ (p, q)\mapsto (g\, p, g\, q)$  and the action $(p, q)\mapsto (h\,p, q)$ for $(h, g)\in G^2$,  can be combined in terms of a 2-group action on ${\CP^{(1)}}(P)$ letting
	$G\rtimes_C G\soutar G$ act  on ${\CP^{(1)}}(P)=P^2$ by $(h,g)\cdot (p, q)= (hg\, p, g\,q) $  for $(h,g)\in G^2$.
	The principal $G$- bundle $P\to M$ therefore induces a  principal bundle groupoid   $$\left(G\rtimes G\right) \curvearrowright {\CP^{(1)}}(P)\to  {\CP^{(1)}}(M).$$
\end{ex}
\begin{ex}\label{ex:vect bundles}
	Let $\pi\colon P\to M$ be a principal $G$-bundle. The $G$ action $A\colon G\times P\fto P$ induces an infinitesimal action we denote by $a\colon\g\times P\fto TP$. For any $g\in G$ there is also the diffeomorphism $A_g\colon P\fto P;p\mapsto A(g,p):=g\,p$. Any vector bundle (in particular $TP$) is a groupoid with source and target equal to the projection ($TP\fto P$) and the composition equal to the addition. The following formula gives a Lie 2-group action of $\g\rtimes_{\rm Ad} G\soutar G$ on $TP$ (as a Lie groupoid):
	$$(w,g)\cdot v= a(w,g\, p)+ dL_g(v) \,\,\,\, {\rm for} \,\, v\in T_p P\, , \,\, w\in \g \,\, {\rm and} \,\, g\in G.$$
	The principal $G$- bundle $P\to M$ therefore induces a  principal bundle groupoid   $$\left(\g\rtimes_{\rm Ad} G\right)\curvearrowright TP\to  TM.$$
\end{ex}

 \begin{defn}  
 		A (strict) {\bf  morphism} of $H\rtimes G$-principal bundle groupoids 
\[ f\colon \left( P^{(1)}_1\to M^{(1)}_1\right)\longrightarrow \left( P^{(1)}_2\to M^{(1)}_2\right)\] 

consists of two groupoid morphisms $f_P\colon P^{(1)}_1\to P^{(1)}_2$ and $f_M\colon {M^{(1)}_1}\to M^{(1)}_2$ such that $f_P$ is $(H\rtimes G)$-equivariant and $f_M$ is $(H\rtimes G)$-invariant.

A principal bundle groupoid $G^{(1)} \curvearrowright {P^{(1)}}\to {M^{(1)}}$ is {\bf trivial} if { there is an isomorphism of groupoids ${P^{(1)}}\simeq {G^{(1)}}\times {M^{(1)}}$ where the r.h.s is the cartesian product groupoid.  }
\end{defn}

{ \begin{rk} Strict morphisms defined as above extend to ones from a principal $(H_1\rtimes G_1)$- bundle groupoid to a principal $(H_2\rtimes G_2)$- bundle groupoid  above a Lie $2$-group morphism  $H_1\rtimes G_1\longrightarrow H_2\rtimes G_2$,  but we shall only refer to the case when $H_i\rtimes G_i=H\rtimes G, i=1,2$ with the identity Lie $2$-group morphism, hence the above definition.\end{rk}}
  
\subsection{{The partial quotient groupoid of a PB groupoid}}

We now consider   the following groupoid:

\begin{defn} \label{def:gauge.grpd} Let  $(H\rtimes G)\soutar G$ be a Lie 2-group and $P^{(1)}\soutar P$ be a $(H\rtimes G)$-principal bundle groupoid over $M^{(1)}\soutar M$. We define the {\bf  partial quotient groupoid} of $P^{(1)}$ as the quotient  by the identity bisection action of $H\rtimes G$ i.e. $P^{(1)}/(\{e_H\}\rtimes G)$ which we will denote simply by:
	\begin{equation}\label{eq:isoPG} P^{(1)}/_G \soutar \left( P/_G \cong M \right)
	\end{equation}
\end{defn}

\begin{prop}\label{prop:gaugeprincipal}{  The canonical projection map
		$Q\colon  {P^{(1)}}\longrightarrow (P^{(1)}/_G)$ is a  principal $G$-bundle and a  principal $G$-bundle groupoid.}\end{prop}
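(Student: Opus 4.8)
The plan is to realise the partial quotient as an ordinary free proper action of a Lie group and then to invoke Proposition \ref{prop:lie2grp.q} with a degenerate structure group. First I would examine the subgroup $\{e_H\}\rtimes G=\{(e_H,g):g\in G\}\subset H\rtimes G$. A direct check with the structure maps of Proposition \ref{prop:cr.to.grpd} gives $\bs(e_H,g)=\bt(e_H,g)=g$ and $(e_H,g)\circ(e_H,g)=(e_H,g)$, so $\{e_H\}\rtimes G$ is exactly the identity bisection; moreover the semi-direct product restricts to $(e_H,g_2)\cdot(e_H,g_1)=(e_H,g_2g_1)$, so as a Lie $2$-group it is the trivial (identity) $2$-group $G\soutar G$ attached to the Lie group $G$. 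In particular $\{e_H\}\rtimes G$ is a closed Lie $2$-subgroup of $H\rtimes G$, and by Example \ref{eq:H=e} its action on $P^{(1)}\soutar P$ reduces to the plain group action of $G$, acting the same way on the arrows $P^{(1)}$ and on the points $P$.

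Next I would verify that this restricted action remains free and proper. Freeness is automatic for the restriction of a free action to any subgroup. Properness passes to the closed subgroup $\{e_H\}\rtimes G$: the action map of $H\rtimes G$ is proper, and restricting its domain to the closed submanifold $(\{e_H\}\rtimes G)\times P^{(1)}$ keeps it proper. The point-level action of $G$ on $P$ is free and proper as well, since this is precisely the data ensuring that $P/_G\cong M$ is a manifold in (\ref{eq:isoPG}).

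With freeness and properness in hand, I would apply Proposition \ref{prop:lie2grp.q} to the Lie $2$-group $\{e_H\}\rtimes G\cong (G\soutar G)$ acting on $P^{(1)}\soutar P$. For this degenerate structure group the relation $R^{(1)}$ is the $(\{e_H\}\rtimes G)$-orbit relation on $P^{(1)}$ and $R$ is the $G$-orbit relation on $P$, so the proposition yields that $P^{(1)}/_G=P^{(1)}/R^{(1)}$ and $P/_G=P/G\cong M$ are manifolds, that $P^{(1)}/_G\soutar M$ is a Lie groupoid, and that $Q$ is a Lie groupoid fibration. On the arrow level, the quotient manifold theorem for the free proper $G$-action on $P^{(1)}$ identifies $Q\colon P^{(1)}\fto P^{(1)}/_G$ as an ordinary principal $G$-bundle; reading this same fibration statement through Definition \ref{defn:Lie3pb} with structure $2$-group $G\soutar G$ then exhibits $Q$ as a principal $G$-bundle groupoid over $P^{(1)}/_G\soutar M$.

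The only genuinely delicate point is the descent of properness to the closed subgroup together with the simultaneous bookkeeping of the two quotient levels: one must ensure that quotienting the arrows by $\{e_H\}\rtimes G$ and the points by $G$ produces compatible smooth quotients onto which the groupoid structure maps descend. This is exactly what Proposition \ref{prop:lie2grp.q} guarantees once the restricted action has been shown to be free and proper, so the remaining verifications are routine.
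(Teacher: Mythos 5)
Your proposal is correct and takes essentially the same route as the paper, whose proof is simply to invoke Proposition \ref{prop:lie2grp.q} with $H=\{e_H\}$ and to note via Example \ref{eq:H=e} that the $\{e_H\}\rtimes G$-action reduces to the plain group action of $G$. The extra steps you supply --- identifying $\{e_H\}\rtimes G$ with the identity $2$-group $G\soutar G$ and checking that freeness and properness restrict to this closed subgroup --- are exactly the routine verifications the paper leaves implicit.
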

\begin{proof} 
		{  This follows from Proposition \ref{prop:lie2grp.q} with $H=\{e_H\}$.} {The fact that it is a principal $G$-bundle follows from the fact that for $H=\{e_H\}$, the  group action of    $H\rtimes G$  amounts to the group  action of   $G$ (see Example \ref{eq:H=e}).}
\end{proof}

\begin{ex}\label{ex:PBgauge}
	 Given a principal $G$-bundle $\pi_0\colon P\to M$, let  ${P^{(1)}}=P\times P$ be the pair groupoid of $P$. The {\it partial quotient groupoid of the principal bundle groupoid} in Example \ref{ex:PBgauge0} is called the gauge groupoid $\CG(P)\soutar M$ of $P\to M$ (\cite[Example 1.1.15]{MK} and references therein).
\end{ex}

\subsection{Groupoid principal  bundles arising from groupoid actions}
 {   In the previous paragraphs, we considered the group action of a Lie $2$-group. Instead, one can consider its groupoid action leading to     principal bundles stemming from  the action of a Lie groupoid on a manifold  \cite[\S 2.2]{H},    \cite[\S 5.7]{MM}, see also \cite[Definition 2.2.1]{NW}, and   Definition \ref{defn:Groupoidprincipalbundle} below.  This approach is useful since principal  bundle groupoids  relate to bundle gerbes, see  Section \ref{sec:PBGtoBG} below.}

\begin{defn}\label{def:groupoidaction}
	A left {(resp. right)} {\bf action  of a Lie groupoid} ${G^{(1)}}\overset{\bt}{\underset{\bs}{\soutar}} {G}$  on a manifold $B$ is given by a surjective submersion $\rho\colon B\to {G}$ called anchor map, and a smooth map \[\star\colon {G^{(1)}} {}_\bs \!\times_\rho B \to B\quad ({ {\rm resp.}  \star\colon B {}_\rho \!\times_\bt {G^{(1)}} \to B)}\] called the action, as in the following diagram {(resp.  for the left action)}:    
	\[\begin{tikzcd}
	G^{(1)} \ar[d, shift left=.2em,"\bs"]\ar[d, shift right=.2em, swap,"\bt"] \arrow[r, bend left=30,swap, "\star"] & B \ar[dl, "\rho"]   \\
	{G} & 
	\end{tikzcd} \] 
	such that, for any $b$ in $B$, any  composable $ \gamma_2, \gamma_1 $ in ${G^{(1)}}$, we have
	\begin{enumerate}
		\item $\rho(\gamma_1\star b)= \bt(\gamma_1)$  { ( resp. $\rho(b\star \gamma_1)= \bs(\gamma_1)$)}
		\item $\gamma_2\star(\gamma_1\star b))=(\gamma_2\circ \gamma_1)\star b$ { (resp. $ (b\star \gamma_1))\star \gamma_2=b\star (\gamma_1\circ \gamma_2)$}.
	\end{enumerate}
	{ $B$ is also called a left (resp. right) $G^{(1)}$-space.}
\end{defn} 
In an effort to clarify notation, this article uses `$\star$' to denote groupoid actions whereas `$\cdot$' denotes group actions. 

Let us recall   the notion of  principal groupoid bundle of \cite{NW}:
\begin{defn} \label{defn:Groupoidprincipalbundle}\cite[\S 2.2]{H}, see also \cite[Definition 2.2.1]{NW} Let $G^{(1)}\soutar G$ be a Lie groupoid and $M$ be a smooth manifold. A {\bf principal  $G^{(1)}$-{  bundle}} (left (resp.  right) bundle) on  ${ M}$ consists of  {a groupoid action on a smooth manifold $B$ 
\[\star\colon {G^{(1)}} _\bs \!\times_\rho B \to B\quad ({ {\rm resp.}  \star\colon B {}_\rho \!\times_\bt {G^{(1)}} \to B)}\] 
 with anchor map $\rho:B\fto G$, together with a surjective submersion $\Pi:B\fto {M}$, as illustrated by the following diagram}:
	\[\begin{tikzcd}
	G^{(1)} \ar[d, shift left=.2em,"\bs"]\ar[d, shift right=.2em, swap,"\bt"] \arrow[r, bend left=30,swap, "\star"] & B \ar[dl, "\rho"] \ar[dr,swap, "\Pi"]&  \\
	{G} & & { M}
	\end{tikzcd} \] 
	such that:
	\begin{itemize}
		\item The quotient of $B$ by the action of $\star$ is a manifold $B/G^{(1)}$. 
		\item  $\Pi$ is $\star$-invariant  and  its induced map from $B/G^{(1)}$ to $M$ is a diffeomorphism.
	\end{itemize}
\end{defn} 

\begin{rk}  In \cite[\S 5.7]{MM}, \cite{H},  see also \cite[Definition 2.1]{R}, \cite[Definition 3,1]{I},  the authors call {\bf  principal groupoid bundle}  a principal $(G^{(1)}\soutar G)$-bundle. We prefer the {latter terminology} to avoid confusion with principal bundle groupoids  {as   in Definition  \ref{defn:Lie3pb}}.
	
\end{rk} 

\begin{rk}
	The  equation  $g\cdot p = p \cdot (g^{-1})$ transforms a right action into a left action and vice-versa. So, without loss of generality, we can use principal $G^{(1)}$-bundles induced by left or right actions.
\end{rk}

 Principal bundle groupoids {as defined in Definition \ref{defn:Lie3pb}, which involve the group action of a Lie $2$-group,  can be viewed as a smooth version of  principal categorical bundles discussed in  \cite{CLS}.
 
Yet they} differ from other notions of principal bundles involving the action of a groupoid. The  main differences lies in the fact that:
	\begin{itemize}
		\item  we allow  $M^{(1)}$ to be any groupoid over $M$, not only the groupoid of identities.  
		 In \cite[Definition 6.1.5.]{NW}, the authors define  {\bf  principal 2-bundles}   with  a manifold as the base in constrast with our   principal bundle groupoids whose base {space is any groupoid as in Definition  \ref{def:gauge.grpd}}.
		
		\item We allow $G^{(1)}$ to be any Lie 2-group, generalising other approaches  that only allow Lie groups. In \cite[Definition 2.2]{LTX},   the authors define {\bf principal bundles over groupoids} from a free and proper Lie 2-group action of the identity Lie 2-group $G\soutar G$ in a groupoid $P^{(1)}$ with quotient a Lie groupoid $M^{(1)}$. Our definition is a generalisation  which allows  $G$ to be any Lie 2-group. Note that we get a principal bundle over a groupoid when considering the partial quotient groupoid of a principal bundle groupoid as in Proposition \ref{prop:gaugeprincipal}. 
		
	\end{itemize}
 The two definitions of  principal bundle over a groupoid in \cite{LTX} on the one hand and  in \cite{MM} on the other, relate by generalised morphisms of Lie groupoids.  
 \begin{defn}\label{defn:generalisedhom}{\cite[Definition 2.8]{LTX} A {\bf  generalised groupoid homomorphism}  from $G^{(1)}\to G$ to $H^{(1)}\to H$  is given by a manifold $P$, two smooth maps $G\overset{\pi}{\leftarrow} P\overset{\rho}\rightarrow H$, a left action of $G^{(1)}$  w.r. to $\rho$, a right action of $H^{(1)}$ with respect to $\pi$, such that the two actions
 		commute, and $P/H^{(1)}\simeq G$.
 	  	\[\begin{tikzcd}
 		G^{(1)} \ar[d, shift left=.2em,"\bs"]\ar[d, shift right=.2em, swap,"\bt"] \arrow[r, bend left=30,swap, "\star"] & P \ar[dl, "\pi"] \ar[dr,swap, "\rho"]& H^{(1)} \arrow[l, bend right=30,  "\star"] \ar[d, shift left=.2em,"\bs"]\ar[d, shift right=.2em, swap,"\bt"]  \\
 		{G} & & { H}\\
 		\end{tikzcd} \] } 
 \end{defn}

 Let $H$ be a Lie group. A principal $H$-bundle over a groupoid    $G^{(1)}\soutar G$ as in \cite[Definition 2.2]{LTX}  (the authors consider right actions) corresponds to a generalised morphism from $G^{(1)}$ to $H\soutar \{*\}$ (there is an equivalence of categories \cite[Proposition 2.11]{LTX}).

\subsection{{$G^{(1)}$-principal $2$-bundles in terms of principal bundle groupoids}}

We now  consider a right group action of a Lie $2$-group instead of the above groupoid action.  We shall compare Definition \ref{defn:Lie3pb} with the subsequent definition.

  We first recall the notion of weak equivalence. 
\begin{defn}\label{defn:Mor3}\cite[Theorem 2.3.13]{NW} Let $P^{(1)}\soutar P$ and $M^{(1)}\soutar M$ be two Lie groupoids. A smooth functor  $\varphi^{(1)}\colon P^{(1)}\longrightarrow M^{(1)}$   is a  {\bf weak equivalence} if:
	\begin{itemize}
		\item $\varphi^{(1)}$ is full and faithful, in other words,  for any $p,q\in P$, the map $\pi\colon P^{(1)}(p,q)\fto M^{(1)}(\varphi(p),\varphi(q))$ is bijective.
		\item $\varphi^{(1)}$ is essentially surjective, in other words,   the map $M^{(1)} {}_\bs \! \times _\varphi P\fto M; (\alpha,p)\mapsto \bt(\alpha)$ is a surjective submersion.
	\end{itemize}  
\end{defn}

 \begin{lem}\label{lem:weakequiv}
		Given a fibration $\pi^{(1)}:P^{(1)}\fto Y^{(1)}$ and a surjective morphism $\varphi^{(1)}:Y^{(1)}\fto M$, the inclusion $P^{(1)} \times_{Y^{(1)}} P^{(1)} \fto P^{(1)} \times_{M} P^{(1)} $ is a weak equivalence iff $\varphi^{(1)}$ is a weak equivalence.
	\end{lem}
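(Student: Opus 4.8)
The plan is to unwind the two defining conditions of a weak equivalence (full faithfulness and essential surjectivity, Definition \ref{defn:Mor3}) for the inclusion $\iota\colon P^{(1)}\times_{Y^{(1)}}P^{(1)}\into P^{(1)}\times_M P^{(1)}$, and to match each of them with a condition on the anchor $(\bs,\bt)\colon Y^{(1)}\fto Y\times_M Y$ which in turn encodes $\varphi^{(1)}$ being a weak equivalence. First I would fix notation, writing $A^{(1)}\soutar P\times_Y P$ for the source groupoid and $B^{(1)}\soutar P\times_M P$ for the target one, so that on arrows $\iota$ is the inclusion of those pairs $(\phi_1,\phi_2)$ with $\pi^{(1)}(\phi_1)=\pi^{(1)}(\phi_2)$ into the pairs with $\varphi^{(1)}(\pi^{(1)}(\phi_1))=\varphi^{(1)}(\pi^{(1)}(\phi_2))$. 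Since $M$ is the trivial groupoid, $\varphi^{(1)}$ satisfies $\varphi\circ\bs=\varphi\circ\bt$, hence $\varphi^{(1)}$ is a weak equivalence exactly when $(\bs,\bt)\colon Y^{(1)}\fto Y\times_M Y$ is a diffeomorphism (full faithfulness) and $\varphi\colon Y\fto M$ is a surjective submersion (essential surjectivity), the latter being built into the hypothesis that $\varphi^{(1)}$ is a surjective morphism.

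For full faithfulness of $\iota$ I would observe that for objects $\mathbf p=(p_1,p_2),\mathbf q=(q_1,q_2)$ of $A^{(1)}$ one has $\pi(p_1)=\pi(p_2)$ and $\pi(q_1)=\pi(q_2)$, so the constraint defining $B^{(1)}(\mathbf p,\mathbf q)$ is vacuous and $B^{(1)}(\mathbf p,\mathbf q)=P^{(1)}(p_1,q_1)\times P^{(1)}(p_2,q_2)$, whereas $A^{(1)}(\mathbf p,\mathbf q)$ is cut out by the extra condition $\pi^{(1)}(\phi_1)=\pi^{(1)}(\phi_2)$. Thus $\iota$ is fully faithful iff this condition holds automatically, which happens iff each set $Y^{(1)}(y,y')$ has at most one element, i.e. iff $(\bs,\bt)$ is injective. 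The nontrivial implication uses the fibration property of $\pi^{(1)}$: given two distinct arrows $\gamma\neq\gamma'$ in $Y^{(1)}(y,y')$, I would lift them at a common point $p$ over $y$ to arrows $\phi,\phi'$ of $P^{(1)}$ with distinct projections and with targets $q',q''$ over $y'$, producing a pair in $B^{(1)}\big((p,p),(q',q'')\big)$ not lying in $A^{(1)}$.

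For essential surjectivity of $\iota$ I would show it is equivalent to surjectivity of $(\bs,\bt)\colon Y^{(1)}\fto Y\times_M Y$. Given $(b_1,b_2)\in P\times_M P$, i.e. $\varphi(\pi(b_1))=\varphi(\pi(b_2))$, connecting it by a $B^{(1)}$-arrow to the image of $A^{(1)}$ amounts to finding a point $y\in Y$ together with $Y^{(1)}$-arrows $y\fto\pi(b_1)$ and $y\fto\pi(b_2)$, which after composition is the same as $Y^{(1)}(\pi(b_1),\pi(b_2))\neq\emptyset$; using that $\pi$ is a surjective submersion and that $\pi^{(1)}$ lifts such arrows to $P^{(1)}$-arrows with prescribed target, this is precisely the statement that every element of $Y\times_M Y$ is hit by $(\bs,\bt)$. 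Combining the two conditions, $\iota$ is a weak equivalence iff $(\bs,\bt)$ is a bijection onto $Y\times_M Y$; since by hypothesis $\varphi$ is already a surjective submersion, this is exactly the condition that $\varphi^{(1)}$ be a weak equivalence.

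The main obstacle is not the set-theoretic bookkeeping above but the smoothness layer: one must check that the bijection, resp. surjection, of anchors is a diffeomorphism, resp. a surjective submersion, and that the lifts produced from $\pi^{(1)}$ can be chosen smoothly, so that the maps appearing in Definition \ref{defn:Mor3} really are surjective submersions. This is exactly where the fibration hypothesis on $\pi^{(1)}$ is essential. Conceptually, the cleanest way to organise these checks is to note that $A^{(1)}$ is the pullback of the diagonal $\Delta\colon Y^{(1)}\fto Y^{(1)}\times_M Y^{(1)}$ along the fibration $\Pi_B\colon B^{(1)}\fto Y^{(1)}\times_M Y^{(1)},\ (\phi_1,\phi_2)\mapsto(\pi^{(1)}(\phi_1),\pi^{(1)}(\phi_2))$, so that $\iota$ is the pullback of $\Delta$; since pullbacks along (surjective) fibrations both preserve and reflect weak equivalences, the claim reduces to the elementary equivalence that $\Delta$ is a weak equivalence iff $\varphi^{(1)}$ is, which is verified by the hom-set computation just described.
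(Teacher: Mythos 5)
Your proof is correct and rests on the same two pillars as the paper's own argument: the hom-set computation for full faithfulness and the fibration property of $\pi^{(1)}$ to produce smooth lifts for essential surjectivity. Your organisation, however, is genuinely different and arguably sharper. The paper proves the two implications separately and asymmetrically: the direction ``$\iota$ weak equivalence $\Rightarrow$ $\varphi^{(1)}$ weak equivalence'' is merely read off a commutative diagram, while the converse uses full faithfulness of $\varphi^{(1)}$ twice --- once to force $\pi^{(1)}(\alpha)=\pi^{(1)}(\beta)$ (fullness of $\iota$) and once to select the unique smooth $\omega_{\alpha\beta}$ to be lifted (essential surjectivity of $\iota$). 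You instead establish an exact dictionary in terms of the anchor $(\bs,\bt)\colon Y^{(1)}\fto Y\times_M Y$: full faithfulness of $\iota$ is equivalent to injectivity of the anchor, and essential surjectivity of $\iota$ to its surjectivity onto $Y\times_M Y$ --- a crosswise matching that is invisible in the paper's proof. This yields both implications of the ``iff'' simultaneously and makes explicit exactly where the hypothesis that $\varphi$ is surjective (the essential-surjectivity half of $\varphi^{(1)}$ being a weak equivalence) enters. Two caveats: like the paper, you implicitly use that $\pi\colon P\fto Y$ is surjective on objects (needed so that arrows of $Y^{(1)}$ between arbitrary points of $Y\times_M Y$ can be detected and realised through $P^{(1)}$), which is automatic in the intended application (the quotient map by the $G^{(1)}$-action in Proposition \ref{prop:kw.to.us}) but is not literally contained in the paper's definition of a fibration; and your closing ``cleanest'' reduction via the pullback of the diagonal $\Delta\colon Y^{(1)}\fto Y^{(1)}\times_M Y^{(1)}$ invokes the fact that pullbacks along fibrations preserve and reflect weak equivalences, which is standard but is nowhere proved in the paper, so it should be cited or proved if it is to replace, rather than merely gloss, the direct hom-set argument.
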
\begin{proof}  
{We need to prove that } the canonical injection	$\iota\colon P^{(1)} \times_{Y^{(1)}} P^{(1)} \fto P^{(1)} \times_{M} P^{(1)} $ is full, faithful and essentially surjective, if and only if $\pi:Y^{(1)}\fto M$ is also full, faithful and essentially surjective.

\begin{itemize}
	\item  If the canonical inclusion	$\iota\colon P^{(1)} \times_{Y^{(1)}} P^{(1)} \fto P^{(1)} \times_{M} P^{(1)} $ is full, faithful and essentially surjective, then $\varphi^{(1)}:Y^{(1)}\fto M$ is also ful, faithful and essentially surjective. This can be read off  the following commutative diagram, which shows that the  { bottom projection is not full,  faithful and essentially surjective then the top inclusion} cannot be:	
$$	\begin{tikzcd}
	P^{(1)} \times_{Y^{(1)}} P^{(1)}\ar[r, "\iota"]  \ar[d]& P^{(1)} \times_{M} P^{(1)} \ar[d] \\
	Y^{(1)} \ar[r, "\varphi^{(1)}"]  & M
	\end{tikzcd}$$

	\item
	
Assuming that $\varphi^{(1)}:Y^{(1)}\fto M$ is full, faithful and essentially surjective, we now prove that $\iota\colon P^{(1)} \times_{Y^{(1)}} P^{(1)} \fto P^{(1)} \times_{M} P^{(1)} $ is full and faithful. For any $(\alpha,\beta)\in P^{(1)} \times_{M} P^{(1)}$ there is only one $\gamma\in Y^{(1)}$ with $\varphi^{(1)}(\gamma)=\varphi^{(1)}(\pi^{(1)}(\alpha))=\varphi^{(1)}(\pi^{(1)}(\beta))$ which means that $\pi^{(1)}(\alpha)=\pi^{(1)}(\beta)=\gamma$ and $(\alpha,\beta)\in P^{(1)} \times_{Y^{(1)}} P^{(1)}$.\\
Let us check  that the map $\iota\colon P^{(1)} \times_{Y^{(1)}} P^{(1)} \fto P^{(1)} \times_{M} P^{(1)} $ is also essentially surjective. The map $\bt:P^{(1)} \times_{M} P^{(1)}  \fto P \times_{M} P$ is a surjective submersion by definition. To prove the statement, we now need to show that around any $(\alpha,\beta)\in P^{(1)} \times_{M} P^{(1)}$, there is a smooth choice $\gamma_{\alpha\beta}\in P^{(1)}$ such that $(\alpha\circ \gamma,\beta)\in P^{(1)} \times_{M} P^{(1)}$ and $\bs(\alpha\circ \gamma,\beta)\in P\times_Y P$. Since $\varphi(\bs(\pi^{(1)}(\alpha)))=\varphi(\bs(\pi^{(1)}\beta))$ and $\varphi^{(1)}$ is full and faithful,  there is only one smooth choice $\omega_{\alpha\beta}\in Y^{(1)}$ from $q:=\bs(\pi^{(1)}(\beta))$ to $p:=\bs(\pi^{(1)}(\alpha))$. The desired $\gamma_{\alpha\beta}$ is any arrow in $P^{(1)}$ with target $\bt(\alpha)$ such that $\pi^{(1)}(\gamma_{\alpha\beta})=\omega_{\alpha\beta}$, this smooth choice exists since $\pi^{(1)}$ is a fibration.
\end{itemize}\end{proof}
Let us recall   the notion of  principal groupoid 2-bundle of \cite{NW}:
\begin{defn}\cite[Definition 6.1.5.]{NW}\label{defn:principal2groupbundle} Let $M$ be a smooth manifold and $({G\ltimes H})$ a  Lie $2$-group.
	
	A {\bf principal $({G\ltimes H})$- $2$-bundle over $M$} is defined from a right Lie $2$-group action \begin{eqnarray*}\star\colon {P^{(1)}}\times ({G\ltimes H}) &\fto &{P^{(1)}}\\
		(\phi,(g,h)) &\mapsto& \phi\star (g,h),
	\end{eqnarray*} on another Lie groupoid ${P^{(1)}} \soutar P$, with a groupoid morphism $P^{(1)}\fto M$, where  $M\soutar M$  is the identity groupoid over a manifold $M$, for which the morphism:
	\[\begin{tikzcd}
	P^{(1)} \times (G\ltimes H) \ar[d, shift right=.2em, swap,"t_P\times t_G"]\ar[d,shift left=.2em,"s_P\times s_G"] \ar[r,"{\rm pr}_1\times a"] & P^{(1)}\times_M P^{(1)} \ar[d, shift right=.2em, swap,"t_P\times t_P"]\ar[d,shift left=.2em,"s_P\times s_P"] \\
	P\times G \ar[r, "{\rm pr}_1\times a"] & P\times_M P
	\end{tikzcd}\]
	is a  weak equivalence.
\end{defn}} Principal $2$-bundles are, in a non trivial way,  principal bundle groupoids, as it is shown in  the subsequent proposition.

\begin{prop}\label{prop:kw.to.us} Let $G^{(1)}\soutar G$ be a { Lie} $2$-group.
	A  principal {$  G^{(1)}$}- $2$-bundle   on a manifold $M$ { as in Definition  \ref{defn:principal2groupbundle}},  is a principal $G^{(1)}$-bundle groupoid (in the sense of Definition \ref{defn:Lie3pb}) over a Lie groupoid $Y^{(1)}\soutar Y$, which is Morita equivalent to the manifold $M$. 
\end{prop}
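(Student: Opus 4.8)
The plan is to realise the given principal $G^{(1)}$-$2$-bundle as the quotient presentation of a principal bundle groupoid over the quotient Lie groupoid $Y^{(1)}:=P^{(1)}/(H\rtimes G)\soutar Y:=P/G$, and then to identify $Y^{(1)}\soutar Y$ as Morita equivalent to $M$ by feeding the weak-equivalence hypothesis of Definition~\ref{defn:principal2groupbundle} into Lemma~\ref{lem:weakequiv}.

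First I would set up the quotient. Once one knows the strict right action of $H\rtimes G$ on $P^{(1)}$ (and of $G$ on $P$) is free and proper, Proposition~\ref{prop:lie2grp.q} produces a Lie groupoid $Y^{(1)}\soutar Y$ together with a quotient map $\pi^{(1)}\colon P^{(1)}\fto Y^{(1)}$ which is a Lie groupoid fibration; by the same token $P^{(1)}\fto Y^{(1)}$ is a principal $G^{(1)}$-bundle groupoid in the sense of Definition~\ref{defn:Lie3pb}. Since $\Pi\colon P\fto M$ is a surjective submersion which is $G$-invariant (the action covers the identity of $M$), it descends to a surjective morphism $\bar\Pi\colon Y^{(1)}\fto M$ onto the identity groupoid $M\soutar M$. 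It then remains to prove that $\bar\Pi$ is a weak equivalence, which is exactly the assertion that $Y^{(1)}\soutar Y$ is Morita equivalent to $M$.

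For this core step I would invoke Lemma~\ref{lem:weakequiv} with the fibration $\pi^{(1)}\colon P^{(1)}\fto Y^{(1)}$ and the surjective morphism $\bar\Pi\colon Y^{(1)}\fto M$: the lemma reduces the weak equivalence of $\bar\Pi$ to that of the inclusion $\iota\colon P^{(1)}\times_{Y^{(1)}}P^{(1)}\fto P^{(1)}\times_M P^{(1)}$. Because $\pi^{(1)}$ is principal, the shear (division) map
\[
\tau\colon P^{(1)}\times (H\rtimes G)\fto P^{(1)}\times_{Y^{(1)}}P^{(1)},\qquad (\xi,\gamma)\mapsto(\xi,\xi\star\gamma),
\]
is an isomorphism of Lie groupoids, and by construction $\iota\circ\tau=\mathrm{pr}_1\times a$ is precisely the morphism appearing in Definition~\ref{defn:principal2groupbundle}. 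That morphism is a weak equivalence by hypothesis and $\tau$ is an isomorphism, so $\iota$ is a weak equivalence; Lemma~\ref{lem:weakequiv} then yields that $\bar\Pi$ is a weak equivalence, completing the identification of $Y^{(1)}$ as a Lie groupoid Morita equivalent to $M$.

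The main obstacle is the input assumed at the outset: that the strict $H\rtimes G$-action on $P^{(1)}$ (and hence the $G$-action on $P$) be free and proper, so that $Y^{(1)}\soutar Y$ is a bona fide Lie groupoid and Proposition~\ref{prop:lie2grp.q} applies. Freeness in the $H$-direction I would extract from the faithfulness of $\mathrm{pr}_1\times a$: two arrows sharing source and target objects with equal image must coincide, which rules out a nontrivial $h\in\ker d$ fixing a given $\xi$. The freeness of the $G$-component is the genuinely delicate point, since an element $(h,g)$ with $g\neq e_G$ acts with a shifted source object and so escapes a direct faithfulness comparison; here I would have to use the essential-surjectivity and submersion content of the weak equivalence (and a dimension count identifying the $\Pi$-fibres with $G$-orbits) to force the object map $\mathrm{pr}_1\times a\colon P\times G\fto P\times_M P$ to be injective, whence the $G$-action on $P$ is free and, by lifting $p\in P$ to $1_p\in P^{(1)}$, so is the full action on $P^{(1)}$. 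Properness I would deduce from the Hausdorffness of $M$: the action map factors as $\mathrm{pr}_1\times a$ followed by the closed embedding $P^{(1)}\times_M P^{(1)}\hookrightarrow P^{(1)}\times P^{(1)}$, reducing properness of the action to that of $\mathrm{pr}_1\times a$. I expect verifying freeness (especially the $G$-direction) and properness rigorously to be the technically heaviest part, whereas the Morita statement itself is a clean consequence of Lemma~\ref{lem:weakequiv}.
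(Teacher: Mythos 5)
Your core argument is exactly the paper's: form the quotient $Y^{(1)}=P^{(1)}/G^{(1)}$, observe that principality gives the shear isomorphism $G^{(1)}\times P^{(1)}\cong P^{(1)}\times_{Y^{(1)}}P^{(1)}$ intertwining $\iota$ with the morphism ${\rm pr}_1\times a$ of Definition \ref{defn:principal2groupbundle}, descend the bundle projection to $\varphi\colon Y^{(1)}\fto M$, and use Lemma \ref{lem:weakequiv} to transfer the weak-equivalence hypothesis to $\varphi$, concluding Morita equivalence via Lemma \ref{lem:equivMor} and Example \ref{ex:Y[1]}. That is, step for step, the published proof.

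Where you diverge is in devoting the bulk of your effort to freeness and properness of the $H\rtimes G$-action, so that Proposition \ref{prop:lie2grp.q} applies and the quotient $Y^{(1)}$ is a genuine Lie groupoid with $\pi^{(1)}$ a fibration. The paper simply sets $Y^{(1)}=P^{(1)}/G^{(1)}$ and asserts these facts without argument (implicitly treating them as part of the data supplied by the principal $2$-bundle, following \cite{NW}); so your instinct that something needs checking here is sound, and you are honest that your treatment is a plan rather than a proof. Be aware, though, that the two sketches you give do not close as stated: faithfulness of ${\rm pr}_1\times a$ only compares arrows in a common hom-set, so it constrains elements of $\ker(\bs_{G^{(1)}})$ (indeed of $\ker d$) and says nothing directly about the $G$-component, as you note; and your properness reduction only trades properness of the action for properness of ${\rm pr}_1\times a$, which does not follow from its being a weak equivalence (fullness, faithfulness and essential surjectivity carry no compactness control). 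So if you intend to go beyond the paper and actually verify the hypotheses of Proposition \ref{prop:lie2grp.q}, these two points would have to be argued differently, e.g.\ by extracting them from the structure theory of principal $2$-bundles in \cite{NW} rather than from the weak-equivalence condition alone. For the statement as the paper proves it, your first three paragraphs already suffice.
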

\begin{proof} With the notations of Definition \ref{defn:principal2groupbundle}, let $P^{(1)}\fto M$ be a $G^{(1)}$-principal  $2$-bundle. We want to define a  principal  $G^{(1)}$-bundle groupoid ($P^{(1)}\soutar P$) over a Lie groupoid $Y^{(1)}\soutar Y$, which is Morita equivalent to $M$ by Example \ref{ex:Y[1]}.
	
We set $Y^{(1)}=P^{(1)}/G^{(1)}$  in which case $G^{(1)}\times P^{(1)}\cong P^{(1)} \times_{Y^{(1)}} P^{(1)}$ and the quotient map $\pi^{(1)}\colon P^{(1)}\fto Y^{(1)}$ is a fibration. Because the morphism $P^{(1)}\fto M$ is $G^{(1)}$-invariant, there exists a morphism $\varphi\colon Y^{(1)}\fto M$. By Lemma \ref{lem:weakequiv}, the inclusion $P^{(1)} \times_{Y^{(1)}} P^{(1)} \fto P^{(1)} \times_{M} P^{(1)} $ is a weak equivalence iff   $\varphi^{(1)}:Y^{(1)}\fto M$ is a weak equivalence. By Lemma \ref{lem:equivMor}, $Y^{(1)}$ is Morita equivalent to $M$  (see Example  \ref{ex:Y[1]}).\\

\end{proof}

\section{{{ A functorial correspondence between PB groupoids and bundle gerbes}}}\label{sec:PBGtoBG}
{In this section, we generalise the classical notion of Lie 2-group bundle gerbe over a smooth manifold \cite[Definition 5.1.1]{NW} to that  of a Lie 2-group bundle gerbe over a Lie groupoid $Y^{(1)}\soutar Y$.  Specialising to bundle gerbes over fibre bundle Lie groupoids   $Y^{[2]}\soutar Y$ yields back the classical Lie 2-group bundle gerbe over the orbit space  $M\simeq Y/Y^{[2]}$.  From a principal  Lie 2-group bundle groupoid  $P^{(1)}\soutar P$  over a fibre bundle groupoid  $Y^{[2]}\soutar Y$, we build a bundle gerbe  over the fibre bundle groupoid $P^{[2]}\soutar P$, which yields a bundle gerbe over the quotient space $P^{[2]}/P$.  From a trivial base Lie 2-group  principal bundle groupoid  $P^{(1)}\soutar P$ over a Lie groupoid $Y^{(1)}\to Y$, we build  a Lie 2-group  bundle gerbe  over $Y^{(1)}$.}

{Let us first recall the definition of a fibre bundle groupoid.

\begin{defn}\label{defn:fibreproductgroupoid} \cite{MK} To a { surjective} submersion $\pi:Y\to M$  between two smooth manifolds $Y$ and $M$, we associate  the \textbf{fiber product groupoid} of $\pi$,  whose set of points is $Y$ and whose set of arrows is given by: 
	$$Y^{[2]}:= Y \times_M Y=\{(y_2,y_1)\in Y^2 \st \pi(y_2)=\pi(y_1)\}.$$
	for any $(y_3,y_2),(y_2,y_1)\in Y^{[2]]}$ there is
	\[\bs (y_2,y_1):=y_1 \, \,\, , \, \,\, \bt(y_2,y_1):=y_2\, \,\, , \, \,\, (y_3,y_2)\circ (y_2,y_1) := (y_3,y_1)\,,\]
	\[e_{y_1}:=(y_1,y_1)\in Y^{[2]} \y (y_2,y_1)^{-1}:=(y_1,y_2).\]
\end{defn}}
\subsection{{ Bundle gerbes over Lie groupoids}}

\begin{defn}\label{def:bg.kw} \cite[Definition 5.1.1]{NW}  Given a Lie $2$-group ${H\rtimes G}$, 
	a {\bf  $({H\rtimes G})$-bundle gerbe} over a smooth manifold  $M$  consists of
	
	\begin{itemize}
	\item A Lie groupoid $B\soutar Y$ with composition $\circ_\mu\colon B _{\bs} \!\times_{\bt}  B\to B$;
	\item A surjective submersion $\pi\colon Y\to M$;
	\item A $({H\rtimes G}\soutar G)$ groupoid action $\star:{(H\rtimes G)} \,_\bs \!\times_\rho B\fto B $ with anchor $\rho:B\fto G$.
\end{itemize}
Such that:
\begin{enumerate}
	\item     The image of the map $ \bt\times\bs\colon B \fto Y\times Y$ is the submanifold $Y^{[2]}=Y\times_M Y$. In other words, the map $\pi$ gives a diffeomorphism between the orbit space $Y/B$ and $M$, where the orbit space $Y/B$ is the quotient of $Y$ by the equivalence relation: $y_1\sim y_2\Leftrightarrow \exists b\in B, \bs(b)=y_1\wedge \bt(b)=y_2$.
	 
	\item  The induced map {$\Pi:B\fto Y^{[2]}$} is a submersion and an  $({H\rtimes G}\soutar G)$-principal bundle with respect to the groupoid action, as the following diagram suggests:
		\[\begin{tikzcd}
			H\rtimes G \ar[d, shift left=.2em,"\bs"]\ar[d, shift right=.2em, swap,"\bt"] \arrow[r, bend left=30,swap, "\star"] & B \ar[dl, "\rho"] \ar[r, "\Pi"]& Y^{[2]} \ar[d, shift left=.2em,"\bs"] \ar[d, shift right=.2em, swap,"\bt"]&  \\
			{G} & & Y  \ar[r,"\pi"] & M
		\end{tikzcd} \] 
		\item The composition $\circ_\mu$ is $(H\rtimes G)$-equivariant, i.e.:
		$$((h_2,g_2)\star b_2) \circ_\mu ((h_1,g_1)\star b_1)= ((h_2,g_2)\cdot (h_1,g_1))\star (b_2 \circ_\mu b_2),$$
		for $(h_2,g_2),(h_1,g_1)\in H\rtimes G$ and composable elements $b_2,b_1\in B$.
	\end{enumerate}
\end{defn}
\begin{rk}
{The above definition corresponds to an interpretation of the bundle gerbe product $\mu$ in \cite[Definition 5.1.1]{NW} in terms of a groupoid composition $\circ_\mu$, in viewing $B$ as a groupoid over $Y$. The  existence of an identity and the invertibility in $B$ result from \cite[Lemma 5.2.5]{NW}.}
\end{rk}

{We generalise the above definition  to bundle gerbes over groupoids.}
\begin{defn}\label{defn:bundlegerbeovergrp}
	Given a  Lie $2$-group   $H\rtimes G$ and a groupoid $Y^{(1)}\soutar Y$, an   ${H\rtimes G}$-bundle gerbe over a Lie groupoid $Y^{(1)}\soutar Y$ consists of
	\begin{itemize} 
		\item A Lie groupoid $B\soutar Y$, with composition $\circ_\mu\colon B \,_\bs\!\times_\bt B\fto B$;
		\item A groupoid morphism $\Pi\colon B\fto Y^{(1)}$ over the identity of $Y$;
		\item A $({H\rtimes G}\soutar G)$-groupoid action  $\star:(H\rtimes G)_\bs \! \times_\rho B \fto B$ and anchor map $\rho:B\fto G$,
	\end{itemize}
such that:
\begin{enumerate} 
		\item $\Pi\colon B\fto Y^{(1)}$ is a $({H\rtimes G}\soutar G)$-principal bundle.
		\item The composition $\circ_\mu$ of $B$ is $({H\rtimes G})$-equivariant i.e.
		$$((h_2,g_2)\star b_2) \circ_\mu ((h_1,g_1)\star b_1)= ((h_2,g_2)\cdot (h_1,g_1))\star (b_2 \circ_\mu b_2),$$
		for all $(h_2,g_2),(h_1,g_1)\in H\rtimes G$ and $b_2,b_1\in B$,
	\end{enumerate}
		as suggested by the following diagram:
	\begin{equation}\label{eq:HGbundlegerbe}\begin{tikzcd}
	H\rtimes G \ar[d, shift left=.2em,"\bs"]\ar[d, shift right=.2em, swap,"\bt"] \arrow[r, bend left=30,swap, "\star"] & B \ar[dl, "\rho"] \ar[r, "\Pi"] & Y^{(1)} \ar[d, shift left=.2em,"\bs"] \ar[d, shift right=.2em, swap,"\bt"]  \\
	{G} & & Y  
	\end{tikzcd} \end{equation}

\end{defn} 

\begin{ex} Given a submersion $\pi\colon Y\fto M$ and taking 
	$	Y^{(1)}= Y^{[2]}:=Y\times_\pi Y$ to be the tensor product groupoid over $Y$, gives back a bundle gerbes over the orbit space $M=Y/Y^{[2]}$.
\end{ex}

\begin{defn} A (strict) {\bf  morphism} of $H\rtimes G$-bundle gerbes 
	\[ f\colon \left( B_1\to Y^{(1)}_1\right)\longrightarrow \left( B_2\to Y^{(1)}_2\right)\] 
	
	consists of two groupoid morphisms $f_B\colon B_1\fto B_2$ and $f_Y\colon {Y^{(1)}_1}\to Y^{(1)}_2$ such that $f_B$ is $(H\rtimes G)$-equivariant and $f_Y$ is $(H\rtimes G)$-invariant (as groupoid actions).
\end{defn}
\begin{rk}Later we shall see how a morphism of bundle gerbes arises from a morphism of principal bundle groupoids. 
\end{rk}
\subsection{From a PB groupoid over a fiber product groupoid to a bundle gerbe}
 We build a functor 
 $$\Phi:\text{PB groupoids over fibre product groupoids}\fto \text{Bundle gerbes over fiber product groupoids}$$
 
\begin{prop}\label{thm:pb.to.grb} Let $H\rtimes G$ be a Lie $2$-group, $\pi:Y\fto M$ be a submersion between the manifolds $Y$ and $M$. Given a principal {$(H\rtimes G)$-bundle groupoid ($P^{(1)}\soutar P$) over  the fiber product groupoid   $(Y^{[2]}\soutar Y)$} (see Definition \ref{defn:Lie3pb})
	\[\begin{tikzcd}
		(H\rtimes G)\times P^{(1)} \ar[d, shift right=.2em, swap,"\bt\times t"]\ar[d,shift left=.2em,"\bs\times s"] \ar[r] & P^{(1)} \ar[d, shift right=.2em, swap,"t"]\ar[d,shift left=.2em,"s"] \ar[r,"{Q}^{(1)}"] & Y^{[2]} \ar[d, shift right=.2em, swap,"t"]\ar[d,shift left=.2em,"s"]\\
		G\times	P \ar[r] & P \ar[r,"Q"] & Y
	\end{tikzcd}\]
	 the following data defines a bundle gerbe $\Phi(P^{(1)}):=G\times P^{(1)}$ over $P^{[2]}=P\times_M P$: 

	\begin{equation}\label{eq:diagrammfibration}\begin{tikzcd}
		H\rtimes G \ar[d, shift left=.2em,"\bs"]\ar[d, shift right=.2em, swap,"\bt"] \arrow[r, bend left=30,swap, "\star"] & G\times P^{(1)} \ar[dl, "\rho"] \ar[r, "\Pi"]& P\times_M P \ar[d, shift left=.2em,"\bs"] \ar[d, shift right=.2em, swap,"\bt"]&  & \\
		{G} & & P  \ar[r,"Q"] & Y \ar[r,"\pi"] & M
	\end{tikzcd} \end{equation}
where {for any $(g, \phi)$ in $G\times P^{(1)}$}:
	$$\Pi (g,\phi):= (g\cdot \bt(\phi),\bs(\phi)) \,\, ,\,\,\, \rho(g,\phi):= g,$$
	$$(h,g)\star (g,\phi):=\left(\,d(h)\,g \,\, , \,(C_{g^{-1}}h^{-1},e)\cdot \phi \,\right)\quad \text{using the notations of Definition \ref{defn:cross module}},$$
	$$(g_2,\phi_2)\circ_\mu (g_1,\phi_1):=\left(\, g_2 \cdot g_1 \,\, , \,\, \left( (e,g_1^{-1})\cdot \phi_2\right) \circ \phi_1 \,\, \right),$$
	for $(h,g)\in H\rtimes G$, $\phi\in P^{(1)}$ and composable $(g_2,\phi_2),(g_1,\phi_1)\in G\times P^{(1)}$ (i.e. $s(\phi_2))=g_1\cdot t(\phi_1)$ ).
\end{prop}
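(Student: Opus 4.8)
The plan is to check that the quadruple $(B:=G\times P^{(1)},\ \pi\circ Q,\ \star,\ \rho)$ satisfies the axioms of a bundle gerbe over the fibre product groupoid $P^{[2]}\soutar P$ (Definition~\ref{defn:bundlegerbeovergrp}, which for a fibre product groupoid recovers the classical notion of Definition~\ref{def:bg.kw} over the manifold $M$). First I would make $B=G\times P^{(1)}$ into a Lie groupoid over $P$ by forcing $\Pi$ to be a groupoid morphism over $\mathrm{id}_P$; since $\Pi(g,\phi)=(g\cdot\bt(\phi),\bs(\phi))$ and the fibre product groupoid has $\bt(p_2,p_1)=p_2$, $\bs(p_2,p_1)=p_1$, this dictates $\bt_B(g,\phi)=g\cdot\bt(\phi)$ and $\bs_B(g,\phi)=\bs(\phi)$. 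The composability condition $\bs_B(g_2,\phi_2)=\bt_B(g_1,\phi_1)$ then reads $\bs(\phi_2)=g_1\cdot\bt(\phi_1)$, which is exactly what makes $\big((e,g_1^{-1})\cdot\phi_2\big)\circ\phi_1$ a legal composite in $P^{(1)}$: because the $H\rtimes G$-action is a groupoid morphism (Definition~\ref{defn:Lie2geoupaction}) we have $\bs\big((e,g_1^{-1})\cdot\phi_2\big)=g_1^{-1}\cdot\bs(\phi_2)=\bt(\phi_1)$. Associativity of $\circ_\mu$, the unit $(e,e_p)$ at $p\in P$ (with $e_p$ the identity of $P^{(1)}$), and the inverse $(g,\phi)^{-1}=(g^{-1},(e,g)\cdot\phi^{-1})$ are then verified by direct computation, each reduction using only that the action is a groupoid morphism together with the semidirect multiplication of Proposition~\ref{prop:cr.to.grpd}. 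The same morphism property shows $\Pi$ respects $\circ_\mu$.

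Next I would check that $\star$ is a left action of the Lie groupoid $H\rtimes G\soutar G$ on $B$ with anchor $\rho=\mathrm{pr}_G$. The anchor compatibility $\rho\big((h,g)\star(g,\phi)\big)=\bt(h,g)$ is the identity $d(h)g=d(h)g$, and $\star$-invariance of $\Pi$ is a one-line computation using $d(C_{g^{-1}}h^{-1})=g^{-1}d(h)^{-1}g$ (the first relation of Definition~\ref{defn:cross module}). The associativity $(h_2,g_2)\star\big((h_1,g_1)\star(g_1,\phi)\big)=\big((h_2,g_2)\circ(h_1,g_1)\big)\star(g_1,\phi)$, valid for composable $(h_2,g_2),(h_1,g_1)$ (i.e. $g_2=d(h_1)g_1$), is where the crossed module relations $d(C_gh)=g\,d(h)\,g^{-1}$ and $C_{d(h)}h'=h h' h^{-1}$ enter: expanding both sides, the $G$-components agree at once and the $P^{(1)}$-components reduce to $C_{g_2^{-1}}h_2^{-1}\cdot C_{g_1^{-1}}h_1^{-1}=C_{g_1^{-1}}\big((h_2h_1)^{-1}\big)$, which follows from $g_2^{-1}=g_1^{-1}d(h_1^{-1})$ and $C_{d(h_1^{-1})}(h_2^{-1})=h_1^{-1}h_2^{-1}h_1$.

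To prove that $\Pi:B\fto P^{[2]}$ is an $H\rtimes G$-principal bundle (the first condition of Definition~\ref{defn:bundlegerbeovergrp}), I would argue in three steps. For surjectivity with image exactly $P^{[2]}$: since $Q$ is $G$-invariant and $Q^{(1)}(\phi)\in Y^{[2]}$, one has $(\pi\circ Q)(g\cdot\bt(\phi))=(\pi\circ Q)(\bt(\phi))=(\pi\circ Q)(\bs(\phi))$, so $\Pi(g,\phi)\in P^{[2]}$; conversely any $(p_2,p_1)\in P^{[2]}$ is hit by first lifting $(Qp_2,Qp_1)\in Y^{[2]}$ along the surjective fibration $Q^{(1)}:P^{(1)}\fto Y^{[2]}$ and then normalising the source to $p_1$ and the target to $p_2$ using the free $\{e\}\rtimes G$-action; this simultaneously identifies $P/B\cong M$. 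Freeness of $\star$ holds because if $(h,g)\star(g,\phi)=(g,\phi)$ then $d(h)=e$ and $(C_{g^{-1}}h^{-1},e)\cdot\phi=\phi$, so $h=e$ by freeness of the $H\rtimes G$-action on $P^{(1)}$. Finally the $\star$-orbits coincide with the fibres of $\Pi$, which follows from $P^{(1)}\fto Y^{[2]}$ being $H\rtimes G$-principal together with $Q:P\fto Y$ being $G$-principal; hence $\Pi$ is principal.

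The \emph{main obstacle} is the remaining condition, the $(H\rtimes G)$-equivariance of $\circ_\mu$. Writing $b_i=(g_i,\phi_i)$, one expands
$$\big((h_2,g_2)\star b_2\big)\circ_\mu\big((h_1,g_1)\star b_1\big)\qquad\text{and}\qquad\big((h_2,g_2)\cdot(h_1,g_1)\big)\star(b_2\circ_\mu b_1).$$
Their $G$-components both equal $d(h_2)g_2\,d(h_1)g_1=d\big(h_2\,C_{g_2}h_1\big)g_2g_1$, again by $d(C_gh)=g\,d(h)\,g^{-1}$. The $P^{(1)}$-components require commuting the left translation by $(e,\tilde g_1^{-1})$ (with $\tilde g_1=d(h_1)g_1$) and the $H$-factors past the groupoid composition $\circ$ of $P^{(1)}$; the decisive tool is the interchange law furnished by the action being a groupoid morphism, namely $(\alpha_2\circ\alpha_1)\cdot(\phi_2\circ\phi_1)=(\alpha_2\cdot\phi_2)\circ(\alpha_1\cdot\phi_1)$ for composable $\alpha_2,\alpha_1\in H\rtimes G$ and composable $\phi_2,\phi_1\in P^{(1)}$, used together with the semidirect multiplication and $C_{d(h)}h'=h h' h^{-1}$. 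I expect this to be the longest and most delicate step, since it requires factoring the acting element $(C_{(g_2g_1)^{-1}}(h_2C_{g_2}h_1)^{-1},e)$ as a composable product in $H\rtimes G$ so that the interchange law applies, and the bookkeeping of the conjugations $C_{g^{-1}}$ is where order errors are easiest to make. Once the two $P^{(1)}$-components are matched, all the bundle gerbe axioms hold, so $\Phi(P^{(1)})=G\times P^{(1)}$ is a bundle gerbe over $P^{[2]}$ and $\Phi$ is well defined on objects.
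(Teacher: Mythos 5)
Your proposal is correct and follows essentially the same route as the paper's proof: the same verification of the groupoid and action axioms, the same use of the fibration property of $Q^{(1)}:P^{(1)}\fto Y^{[2]}$ plus adjustment by the $G$-factor to get surjectivity of $\Pi$, and the same key step for equivariance of $\circ_\mu$, including the paper's exact factorisation of $\left(C_{(g_2g_1)^{-1}}(h_2\,C_{g_2}h_1)^{-1},e\right)$ as a composable product $({\bf h},{\bf g})\circ\left(C_{g_1^{-1}}h_1^{-1},e\right)$ in $H\rtimes G$ so that the interchange law for the Lie $2$-group action applies. The only point the paper checks that you leave implicit is that $\Pi$ is a \emph{submersion} (not merely surjective), which is obtained by running your $Q^{(1)}$-lifting argument at the level of tangent vectors, exactly as in the paper.
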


\begin{rk}Starting from a PB groupoid over $Y^{[2]}$ we get a bundle gerbe over $P\times_M P$: these two groupoids are Morita equivalent to the identity groupoid over $M$ (ee Example \ref{ex:Y[1]}).
\end{rk}

Proposition \ref{thm:pb.to.grb} is proven in \cite{NW}. We give here a proof that allow us to generalize it, as Proposition \ref{prop:btpb.to.grb}.

\begin{proof}
	
	\begin{itemize} 
		\item $\star$ is a groupoid action with anchor $\rho$. It is free because $H\rtimes G$ acts freely in $P^{(1)}$. { It is proper since $\Pi$ is a submersion and invariant under this action}.
		\item $\circ_\mu$ is associative and smooth because $\cdot$ and $\circ$ are associative and smooth.
		\item Let us prove that $\Pi$ is a surjective submersion,  which uses the fact  that $Y^{[2]}$ is a fibre product groupoid:\\
	 It follows from Definition \ref{defn:Lie3pb} that the map $Q^{(1)}:P^{(1)}\fto Y^{[2]}$,  where $Y^{[2]}$ is viewed as a groupoid $Y^{[2]}\soutar Y$  and $Q^{(1)}(\phi)=(Q(\bt(\phi)),Q(\bs(\phi)))$, is a fibration, which implies that the  map 
		$$Q^{(1)}_s:P^{(1)}\fto Y^{(1)}{}_\bs \!\times_Q P;\phi\mapsto (Q^{(1)}(\phi),\bs(\phi))$$
		is a surjective submersion. We want to show that $\Pi:G\times P^{(1)}\to P\times_MP$ is a surjective submersion.\\ 
		Hence, for any $(q,p)\in P\times_M P$ and $(V,U)\in T_{(p,q)}(P\times_M P)=T_p P \times_{TM} T_q P$ there is $\phi\in P^{(1)}$ and $W\in T_\phi P^{(1)}$ such that:
		$$ \bs(\phi)=p, \,   \y Q^{(1)}(\phi)=(Q(q),Q(p))\in Y\times_M Y,$$
		$$ d\bs(W)=X \y dQ^{(1)}(W)=(dQ(V),dQ(U))\in T_{(Q(q),Q(p))}(Y\times_M Y).$$
		This implies that:\\ 
		{ $\Pi$ is surjective}: $Q(q)=Q(\bt(\phi))$ so there is $g\in G$ such that $\Pi(g,\phi)=(g\bt(\phi),\bs(\phi))=(q,p)$.\\
	{ $\Pi$ is a submersion}: $dQ(Y) = dQ (d\bt (W))$ so there is $v\in T_g G$ such that $d\Pi(v,W)=(V,U)$.
		\item Let us prove that $\circ_\mu$ is $(H\rtimes G)$-equivariant:\\
		For any composable elements $(g_2,\phi_2),(g_1,\phi_1)\in G\times P^{(1)}$ and $h_2,h_1\in H$, { using the notations $C\colon G\to {\rm Aut}(H)$ and  $d\colon H\to G$ of Definition \ref{defn:cross module}, let us set}:
		\begin{itemize}
			\item[\textbullet] $g:=d(h_2)\, g_2 \, d(h_1) \, g_1$,
			\item[\textbullet] $({\bf h},{\bf g}):=\left(C_{g_1^{-1} d(h_1^{-1})g_2^{-1}} h_2^{-1},g_1^{-1}d(h_1^{-1})g_1 \right)$.
		\end{itemize}
		The elements $({\bf h},{\bf g})$ and $\left(C_{g_1^{-1}}h_1^{-1},e\right)$ are composable in $H\rtimes G$ { since by  (\ref{eq:crossedmodule}) we have $\bt\left(C_{g_1^{-1}}h_1^{-1},e\right)= d(C_{g_1^{-1}}h_1^{-1})e= g_1^{-1} d(h_1^{-1}) g_1 =\bs \left(\bf h, \bf g\right)$} and:
		$$({\bf h},{\bf g})\circ\left(C_{g_1^{-1}}h_1^{-1},e\right)={({\bf h}\,C_{g_1^{-1}}h_1^{-1}, e)= \left(C_{g_1^{-1} d(h_1^{-1})g_2^{-1}} h_2^{-1}\,C_{g_1^{-1}}h_1^{-1}, e\right)= \left(C_{g_1^{-1}}\left(  h_1^{-1} \left( C_{g_2^{-1}}  h_2^{-1}\right)h_1\right)\,C_{g_1^{-1}}h_1^{-1}, e\right)}$$
		$${  \left(C_{g_1^{-1}}\left(  h_1^{-1}   C_{g_2^{-1}}  h_2^{-1}\right)C_{g_1^{-1}}\left( h_1\right)\,C_{g_1^{-1}}h_1^{-1}, e\right)=} \left(C_{g_1^{-1}}\left( h_1^{-1} C_{g_2^{-1}} h_2^{-1}\right),e\right).
		$$ 
		
	Moreover, on the one hand we have:
		\begin{eqnarray*}& &\left((h_2,g_2)\star (g_2,\phi_2)\right)\circ_\mu \left((h_1,g_1)\star (g_1,\phi_1)\right)\\
		&=&{\left( d(h_2)g_2, (C_{g_2}^{-1} h_2^{-1}, e)\cdot \phi_2\right)\circ_\mu  \left( d(h_1)g_1, (C_{g_1}^{-1} h_1^{-1}, e)\cdot \phi_1\right)}\\
			&=& {\left( g, \left(\left(e, g_1^{-1}d(h_1^{-1}) \right)\cdot\left((C_{g_2}^{-1} h_2^{-1}, e)\cdot \phi_2\right)\right)\circ \left((C_{g_1}^{-1} h_1^{-1}, e)\cdot \phi_1\right)\right)}\\
				&=& {\left(g, \left(C_{g_1^{-1}d(h_1^{-1})} \left(C_{g_2}^{-1} h_2^{-1}\right), g_1^{-1}d(h_1^{-1})\right)\cdot \phi_2\right)\circ \left((C_{g_1}^{-1} h_1^{-1}, e)\cdot \phi_1\right)}\\
					&=& {\left(g, \left(C_{g_1^{-1}d(h_1^{-1})g_2^{-1}}   h_2^{-1}, {\bf g}\, g_1^{-1} \right)\cdot \phi_2\right)\circ \left((C_{g_1}^{-1} h_1^{-1}, e)\cdot \phi_1\right)}\\
			&=&\left( g \,\, , \,\, \left( ({\bf h},{\bf g}) \cdot \left((e,g_1^{-1})\cdot\phi_2 \right) \right)\circ \left(\left(C_{g_1^{-1}}h_1^{-1},e\right) \cdot \phi_1 \right) \right).\end{eqnarray*}
		 
		On the other hand, 
	\begin{eqnarray*}&&\left((h_2,g_2)\cdot(h_1,g_1)\right)\star \left((g_2,\phi_2)\circ_\mu(g_1,\phi_1)\right)\\
		&=&{(h_2\cdot C_{g_2}h_1  , g_2\cdot g_1)\star \left(\, g_2 \cdot g_1 \,\, , \,\, \left( (e,g_1^{-1})\cdot \phi_2\right) \circ \phi_1 \,\, \right)}\\
	&=&	{\left(d(h_2)\cdot d(C_{g_2}h_1) ( g_2\cdot g_1) , \,\,  \left(\left(C_{(g_2\cdot g_1)^{-1}}(h_2\cdot C_{g_2}h_1)^{-1},e\right)\cdot \left((e, g_1^{-1})\cdot \phi_2\right)\right) \circ \phi_1 \,\, \right)}\\
		&=&	{\left(d(h_2)\cdot d(C_{g_2}h_1) ( g_2\cdot g_1) , \,\,  \left(\left(C_{(g_2\cdot g_1)^{-1}}(h_2\cdot C_{g_2}h_1)^{-1},e\right)\cdot \left((e, g_1^{-1})\cdot \phi_2\right)\right) \circ \phi_1 \,\, \right)}\\
		&=& {\left(d(h_2)\cdot g_2\cdot d(h_1) \cdot g_1 , \,\,  \left(\left(C_{(g_2\cdot g_1)^{-1}}(h_2\cdot C_{g_2}h_1)^{-1},e\right)\cdot \left((e, g_1^{-1})\cdot \phi_2\right)\right) \circ \phi_1 \,\, \right)}\\
		&=&  \left(g \,\, , \,\, \left( ({\bf h},{\bf g})\circ\left(C_{g_1^{-1}}h_1^{-1},e\right)\right)\cdot\left(\left((e,g_1^{-1})\cdot\phi_2\right) \circ \phi_1\right)\right),\end{eqnarray*}
	The two expressions coincide	since  $\cdot$ is a Lie 2-group action:
	$$\left( ({\bf h},{\bf g}) \cdot \left((e,g_1^{-1})\cdot\phi_2 \right) \right)\circ \left(\left(C_{g_1^{-1}}h_1,e\right) \cdot \phi_1 \right)=\left( ({\bf h},{\bf g})\circ\left(C_{g_1^{-1}}h_1,e\right)\right)\cdot\left(\left((e,g_1^{-1})\cdot\phi_2\right) \circ \phi_1\right),$$
		proving that 
		$$\left((h_2,g_2)\star (g_2,\phi_2)\right)\circ_\mu \left((h_1,g_1)\star (g_1,\phi_1)\right)=\left((h_2,g_2)\cdot(h_1,g_1)\right)\star \left((g_2,\phi_2)\circ_\mu(g_1,\phi_1)\right).$$
	 
	\end{itemize}

This ends the proof.

\end{proof}

{The above proposition  gives rise to a functor $\Phi$ from  $(H\rtimes G)$-groupoids to  $(H\rtimes G)$-bundle gerbes   by  sending a morphism $f$ of $(H\rtimes G)$-groupoids to the morphism $ {\rm Id}_G\times f$ of the corresponding bundle gerbes.}

\subsection{From a bundle gerbe over a groupoid  to a base trivial PB groupoid}

\begin{defn}
	A {trivial base $(H\rtimes G)$-principal bundle groupoid }  is a principal bundle groupoid such that the $0$ level is a trivial principal bundle i.e., $P= G\times Y \to Y$, where as before, the source and target of the Lie 2-group $G^{(1)}= H\rtimes G\soutar G$ built from a map  $d:=\bt|_H:H\to G$,   are given by $\bs_{G^{(1)}}(h, g):=g$, $\bt_{G^{(1)}}(h, g):=d(h)g$ (see Proposition \ref{prop:cr.to.grpd}).
	
\end{defn}

We build a functor 
$$\Psi:\text{Bundle gerbes}\fto \text{base trivial PB groupoids}.$$


\begin{prop}\label{prop:grb.to.pb}
	Given a bundle gerbe $B$ over $Y^{(1)}$ described by the   diagram (\ref{eq:HGbundlegerbe}), 
	there is a base trivial PB groupoid given by:
	\[\begin{tikzcd}
	H\rtimes G \ar[d, shift right=.2em, swap,"\bt"]\ar[d,shift left=.2em,"\bs"] \ar[bend left=25, shift right=.8em,r] & G\times B \ar[d, shift right=.2em, swap,"\bt"]\ar[d,shift left=.2em,"\bs"] \ar[r] & Y^{(1)} \ar[d, shift right=.2em, swap,"\bt"]\ar[d,shift left=.2em,"\bs"]\\
	G \ar[bend left=25, shift right=.8em,r] & G\times Y \ar[r] & Y
	\end{tikzcd}\]
	where:
	\begin{itemize}
		\item For all $(k_1,b_1)\in G\times B$ we have: $\bs(k_1,b_1)=(k_1,\bs(\Pi (b_1)))$ and $\bt(k_1,b_1)=(k_1\rho(b_1),\bt(\Pi(b_1)))$.
		\item for $(k_2,b_2),(k_1,b_1)\in (G\times B) _\bs \!\times_\bt (G\times B)$ then:
		$$(k_2,b_2)\circ (k_1,b_1)= (k_1, (b_2\circ_\mu b_1)).$$
		\item for $(h,g)\in H\rtimes G$ and $(k_1,b_1)\in G\times B$ we have the Lie 2-Group action:
		$$(h,g)\cdot(k_1,b_1)=\left(\, gk_1 \, \,, \,\, \left(C_{\rho(b_1)k_1^{-1}g^{-1}} h^{-1} \,,\, \rho(b_1)\right)\star b_1 \, \right).$$
		\item {The identity morphism of an object  $(g, y)$ is $ (g, {\bf 1}_B(y))$, where ${\bf 1}_B$ is the unit element of $B$.}
		\item {	The inverse of
			a morphism $(g, b)$ is $(g\,\rho(b)^{-1}, i_B(b))$, where $i_B$ is the inverse map of $B$.} 
	\end{itemize}
\end{prop}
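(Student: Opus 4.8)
The statement asserts that a specific package of structure maps makes $G\times B\soutar G\times Y$ into a trivial base principal $(H\rtimes G)$-bundle groupoid whose quotient recovers $Y^{(1)}$. Following Definition \ref{defn:Lie3pb}, the plan is to verify three things in turn: that $G\times B\soutar G\times Y$ is a Lie groupoid with the listed source, target, composition, unit and inverse; that the displayed formula is a free and proper group action of the Lie $2$-group $(H\rtimes G)\soutar G$ on this groupoid in the sense of Definition \ref{defn:Lie2geoupaction}; and that the quotient by this action is isomorphic to $Y^{(1)}\soutar Y$. Triviality of the base is immediate, since $P=G\times Y\fto Y$ is the trivial $G$-bundle with $G$ acting by left translation on the first factor.

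The first step is to record the two facts about the bundle gerbe data that the groupoid axioms rest on, both read off from Definition \ref{defn:bundlegerbeovergrp}: that $\Pi\colon B\fto Y^{(1)}$ is a groupoid morphism covering the identity of $Y$ (so that $b_2,b_1$ are $\circ_\mu$-composable exactly when $\Pi(b_2),\Pi(b_1)$ are composable in $Y^{(1)}$), and the multiplicativity of the anchor $\rho$ under $\circ_\mu$ forced by the domain of the equivariance axiom. With these, the composability condition $\bs(k_2,b_2)=\bt(k_1,b_1)$ unwinds to $k_2=k_1\rho(b_1)$ together with composability of $b_2,b_1$, and the compatibility of $\bs,\bt$ with $\circ$, associativity, and the unit and inverse laws all reduce to the corresponding laws in the groupoid $(B,\circ_\mu)$ plus elementary computations in $G$; here one must respect the paper's right-to-left composition convention so that the order in which $\rho$ multiplies matches the target law. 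Smoothness is clear, as every map is assembled from smooth maps of $B$, the anchor $\rho$, and the group operations of $G$.

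For the second step I would check separately that $(h,g)\cdot(k,b)=\bigl(gk,(C_{\rho(b)k^{-1}g^{-1}}h^{-1},\rho(b))\star b\bigr)$ is a left action of $H\rtimes G$ on the arrows $G\times B$ and that $g\cdot(k,y)=(gk,y)$ is a left action of $G$ on the objects $G\times Y$, using associativity of $\star$ and the crossed module identities of Definition \ref{defn:cross module}. The substantive point is that the action map is a Lie groupoid morphism: compatibility with source and target follows from the invariance $\Pi((h',g')\star b)=\Pi(b)$ of the principal projection and from $\rho((h',g')\star b)=\bt(h',g')$, while compatibility with composition — the equivariance of $\circ$ — is the crux. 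Freeness follows from freeness of $\star$ together with injectivity of $g\mapsto gk$, and properness is inherited from properness of the $(H\rtimes G)$-action on $B$ and of $G$ acting on itself, so that Proposition \ref{prop:lie2grp.q} applies.

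For the third step I would show that the quotient map $(k,b)\mapsto\Pi(b)$, covering $(k,y)\mapsto y$, descends to an isomorphism of groupoids with $Y^{(1)}\soutar Y$: because $\Pi$ is an $(H\rtimes G)$-principal bundle, the orbit of $(k,b)$ is exactly $G\times\Pi^{-1}(\Pi(b))$, so the $G$-factor is absorbed and the orbit space is $B/(H\rtimes G)\cong Y^{(1)}$; that the map intertwines the two groupoid structures is immediate from $\Pi$ being a morphism. I expect the main obstacle to be the equivariance of $\circ$ under the $2$-group action in the second step: exactly as in the proof of Proposition \ref{thm:pb.to.grb}, both sides must be expanded using the semidirect product law and the two crossed module relations $d(C_g h)=g\,d(h)\,g^{-1}$ and $C_{d(h)}h'=hh'h^{-1}$, with close attention to the order of multiplication in $G$; it is precisely this computation that forces the twisting factor $C_{\rho(b)k^{-1}g^{-1}}$ appearing in the action formula.
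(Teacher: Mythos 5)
Your route is sound but genuinely different from the paper's, for a simple reason: the paper does not verify anything itself. Its entire proof of Proposition \ref{prop:grb.to.pb} is a citation to the construction of principal $2$-bundles from bundle gerbes in \cite[\S 7.2]{NW}, with an invitation to compare the stated composition and action with Formula (7.2.1) there. Your three-step direct verification --- groupoid axioms for $G\times B\soutar G\times Y$ extracted from $\Pi$ being a morphism over ${\rm Id}_Y$; the displayed formula being a free and proper Lie $2$-group action in the sense of Definition \ref{defn:Lie2geoupaction}, with the gerbe's equivariance axiom supplying compatibility of the action with $\circ$; identification of the orbit space with $Y^{(1)}$ via $(k,b)\mapsto\Pi(b)$, the $G$-factor being absorbed because the orbit of $(k,b)$ is $G\times\Pi^{-1}(\Pi(b))$ by principality of $\Pi$ --- is exactly what that citation compresses, and it has the merit of making visible which axiom of Definition \ref{defn:bundlegerbeovergrp} feeds which axiom of Definition \ref{defn:Lie3pb}. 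Your treatment of freeness and of the quotient, and the appeal to Proposition \ref{prop:lie2grp.q} for smoothness of the quotient, are all correct.

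Two caveats. First, you leave the crux at the level of ``I would expand both sides'': associativity of the action and its compatibility with composition are the entire mathematical content here, and a complete write-up must execute them. (For instance, associativity of the action reduces, via the identity $C_{d(h)}h'=hh'h^{-1}$ of Definition \ref{defn:cross module}, to the anchor condition $\rho\bigl((h',\rho(b))\star b\bigr)=d(h')\,\rho(b)$; this is where the twist $C_{\rho(b)k^{-1}g^{-1}}$ earns its keep.) Second, your aside about ``the order in which $\rho$ multiplies'' is not cosmetic, and a careful verification must actually resolve it rather than flag it: the domain of the equivariance axiom forces $\rho(b_2\circ_\mu b_1)=\rho(b_2)\,\rho(b_1)$ (since $\bs$ of the semidirect product $(h_2,g_2)\cdot(h_1,g_1)$ is $g_2g_1$, and the paper's own gerbe in Proposition \ref{thm:pb.to.grb} has $\rho(g,\phi)=g$ multiplicative), whereas with the printed target $\bt(k_1,b_1)=(k_1\rho(b_1),\bt(\Pi(b_1)))$ the groupoid law $\bt\bigl((k_2,b_2)\circ(k_1,b_1)\bigr)=\bt(k_2,b_2)$ requires $\rho(b_1)\rho(b_2)=\rho(b_2)\rho(b_1)$, which fails for nonabelian $G$; likewise the printed inverse $(g\,\rho(b)^{-1},i_B(b))$ has source $(g\,\rho(b)^{-1},\bt(\Pi(b)))\neq\bt(g,b)$. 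Everything coheres once the $G$-component is read with the opposite multiplication (e.g.\ $\bt(k,b)=(\rho(b)\,k,\bt(\Pi(b)))$ and inverse $(\rho(b)\,g,i_B(b))$), consistent with the conventions of \cite{NW}. So your plan is the right one, but as written it would stall at precisely the point you hedge; fixing the convention is part of the proof, not a footnote to it.
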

\begin{proof}{  The proof follows the construction of principal 2-bundles  from    bundle gerbes carried out in \cite[\S 7.2.]{NW}, compare the definition of the groupoid composition and the  Lie $2$-group  action with  \cite[Formula (7.2.1)]{NW}.}
\end{proof}
{ This yields a functor $\Psi$ which sends an $(H\rtimes G)$-bundle gerbe morphism $f$ to the
	$(H\rtimes G)$-groupoid morphism ${\rm Id}_G\times f$.}

\begin{rk}\label{rk:Psifunctor}
 	Note that the image of  $\Psi$ restricted to the bundle gerbes over fiber product groupoids $Y^{[2]}$, is a subset of the principal bundle groupoids over fiber product groupoids with trivial base. 
 \end{rk}
\subsection{From a base trivial PB groupoid to a bundle gerbe  over  a groupoid} 
{On the grounds of Proposition \ref{prop:grb.to.pb}, we specialise to the class of trivial base  principal bundle groupoids.}
{The  functor $\Phi$ built above induces a functor:
$$\Xi:\text{ trivial base $(H\rtimes G)$- PB  groupoid over $Y^{(1)}$ }\fto \text{$(H\rtimes G)$-bundle gerbes over}  Y^{(1)}$$}

Let $H\rtimes G \curvearrowright {P^{(1)}}\to Y^{(1)}$ be any base trivial PB groupoid.  {Let ${\rm pr}_G\colon P= G\times Y \to G$ be the canonical projection onto the first component and consider the map }
\[\bs_G:={  {\rm pr}_G\circ \bs} \colon P^{(1)}\fto G . \]

By the implicit function theorem, the preimage of the neutral element $e_G\in G$ given by $B:=\{ \xi\in P^{(1)} \st \bs_G(\xi)=e_G\}$ is a submanifold of $P^{(1)}$. Recall that  $G^{(1)} =	H\rtimes G$ acts on $P^{(1)}$ over the trivial $G$-action  on $P=G\times Y$. We have the following diffeomorphism:

\begin{equation}\label{eq:varphi}
\begin{matrix}
\varphi:	P^{(1)} &\fto &  {G\times B}\\
\xi& \mapsto& {\left(\bs_G(\xi),\bs_G(\xi)^{-1}\,\xi \right)},
\end{matrix} 
\end{equation}
{ (where $g\, \xi= (e, g)\cdot \xi$),} corresponding to the diagramme:
\[\begin{tikzcd}
& G\times B & \\
{	H\rtimes G} \ar[d, shift right=.2em, swap,"\bt "]\ar[d,shift left=.2em,"\bs"] \ar[bend left=25, shift right=.8em,r] & {P^{(1)}} \ar[u, "\varphi"] \ar[d, shift right=.2em, swap,"\bt"]\ar[d,shift left=.2em,"\bs"] \ar[r] & Y^{(1)} \ar[d, shift right=.2em, swap,"\bt"]\ar[d,shift left=.2em,"\bs"]\\
G \ar[bend left=25, shift right=.8em,r] & {G\times Y } \ar[r] & Y
\end{tikzcd}.\]	

{ In the particular case $Y^{(1)}=Y^{[2]}$,} the construction of the previous section yields
  an $(H\rtimes G)$-bundle gerbe $\Psi(P^{(1)})\overset{\varphi}{\simeq} G\times G\times B $ over $P^{[2]}\simeq G\times G\times Y^{[2]}$.  We want to interpret $B$ as a bundle gerbe  over $ Y^{[2]}$.
  Going back to the case  $P^{(1)}\to Y^{(1)}$, let  us first note that the following diagram commutes:
  $$\begin{tikzcd}
  G\times B \ar[r, "\varphi"] \ar[swap,shift right=.2em,d,"{\rm Pr}_B"] & P^{(1)} \ar[d,"\pi"] \ar[l] \\
  B \ar[swap, shift right=.2em,u,"\{e_G\}\times {\rm Id}_B"] \ar[r,"\Pi=\pi\circ \varphi^{-1}|_B"] & Y^{(1)} 
  \end{tikzcd}$$
  Since $\pi$ is a surjective submersion, so is  $\Pi:= (\pi\circ \varphi^{-1})\vert B$ a surjective submersion.



To prove that $\Pi\colon B\to Y^{(1)}$  is an $(H\rtimes G)$-bundle gerbe, we follow  the construction in \cite[\S 7.1]{NW}, where the authors build a bundle gerbe  from a PB groupoid, aswell as the construction in \cite[\S 7.2]{NW}, where the authors build a  PB groupoid from a bundle gerbe, see in particular Formula (7.2.1).\\


\begin{prop} \label{prop:btpb.to.grb} Let $\pi\colon P^{(1)}\to Y^{(1)}$ be a trivial-base $H\rtimes G$- PB groupoid with $P^{(1)}\soutar P$ such that $P\simeq G\times Y$. The submanifold $\Xi(P^{(1)}):=B={\rm Ker}(\bs_G)$ is a $(H\rtimes G)$-bundle gerbe 
	\begin{equation}\label{eq:diagrammtrivialbase}\begin{tikzcd}
	H\rtimes G \ar[d, shift left=.2em,"\bs"]\ar[d, shift right=.2em, swap,"\bt"] \arrow[r, bend left=30,swap, "\star"] &   B  \ar[dl, "\rho"] \ar[r, "\Pi "]&    Y^{(1)} \ar[d, shift left=.2em,"\bs"] \ar[d, shift right=.2em, swap,"\bt"]&  & \\
	{G} & &   Y    \ar[r,"\pi"] & M\\
	\end{tikzcd}. \end{equation}
	over the groupoid $Y^{(1)}$ with:
	\begin{itemize}
		\item { projection  $\Pi:=(\pi \varphi^{-1})\vert_B\colon B\to Y^{(1)}$, }
		\item anchor $\rho=(t_G)^{-1}:=(-)^{-1}\circ {\rm pr}_G\circ ({\bt}|_B)\colon B\fto G;b\mapsto ({\rm pr}_G( \bt(b)))^{-1}$,
		\item the ${H\rtimes G}\soutar G$ action given in terms of the groupoid action:
		$$( h,\rho(b))\star b = (C_{\rho(b)^{-1}}h^{-1},e)\cdot b,
		$${ where $b$ is viewed as an element of $P^{(1)}$.}
		\item bundle gerbe product in terms of the groupoid composition $\circ_{P^{(1)}}$:
		\begin{equation}
			\begin{matrix}
				\circ_\mu\colon & B_{\bs\Pi}\!\times_{\bt\Pi} B_{\gamma_1}&\fto&\hspace{.7in} B \hspace{.7in}\\
				 & (b_2,b_1)&\mapsto& ((e,\rho(b_1)^{-1})\cdot b_2)\circ_{P^{(1)}} b_1 \,\, ,
			\end{matrix}
		\end{equation}
		{ where $b_2$ is viewed as an element of $P^{(1)}$.}
	 
	\end{itemize}
\end{prop}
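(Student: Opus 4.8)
The plan is to verify directly the data and the two conditions of Definition \ref{defn:bundlegerbeovergrp}, transporting every computation into the ambient PB groupoid $P^{(1)}$. First I would record the explicit shape of elements of $B$: since $B=\bs_G^{-1}(e_G)$, each $b\in B$ satisfies $\bs(b)=(e_G,y_1)$ and $\bt(b)=(\rho(b)^{-1},y_2)$ in $P=G\times Y$, so the source and target of the groupoid $B\soutar Y$ are $\bs_B:=\bs_{Y^{(1)}}\circ\Pi$ and $\bt_B:=\bt_{Y^{(1)}}\circ\Pi$, sending $b$ to $y_1$ and $y_2$. I would then check that $B\soutar Y$ is a Lie groupoid with units $1_B(y)=e_{P^{(1)}}(e_G,y)$ and inverses $i_B(b)=(e,\rho(b))\cdot b^{-1}$: the product $\circ_\mu(b_2,b_1)=((e,\rho(b_1)^{-1})\cdot b_2)\circ_{P^{(1)}} b_1$ is defined exactly when $\bs_B(b_2)=\bt_B(b_1)$, the $G$-component of its source is $e_G$ so it lands in $B$, and $\rho(\circ_\mu(b_2,b_1))=\rho(b_2)\rho(b_1)$. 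The one structural point worth isolating is associativity; it follows from associativity of $\circ_{P^{(1)}}$ together with the fact that each translation $\xi\mapsto(e,g)\cdot\xi$ is a groupoid automorphism of $P^{(1)}$ (a consequence of the action map being a Lie groupoid morphism, Definition \ref{defn:Lie2geoupaction}), which lets me pull the $G$-factors through $\circ_{P^{(1)}}$ and match the two bracketings.

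Next I would treat $\Pi$ and the principal structure. Because $\pi\colon P^{(1)}\fto Y^{(1)}$ is an $(H\rtimes G)$-invariant groupoid morphism covering the projection $G\times Y\fto Y$, its restriction $\Pi=\pi|_B$ is a groupoid morphism over ${\rm id}_Y$ (invariance of $\pi$ is precisely what kills the twisting factor $(e,\rho(b_1)^{-1})$ hidden in $\circ_\mu$), and $\Pi$ is a surjective submersion by the commuting square with $\varphi$ established before the statement. For the action I would verify that $\rho=(-)^{-1}\circ{\rm pr}_G\circ(\bt|_B)$ is a surjective submersion and that, using the crossed-module identity $d(C_g h)=g\,d(h)\,g^{-1}$, the assignment $(h,\rho(b))\star b=(C_{\rho(b)^{-1}}h^{-1},e)\cdot b$ lands in $B$ with $\rho((h,\rho(b))\star b)=d(h)\rho(b)=\bt(h,\rho(b))$, so that $\star$ is a groupoid action anchored by $\rho$; the action axiom then reduces to the group-action axiom for $\cdot$ and the same identity. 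Freeness and properness of $\star$ are inherited from the $H\rtimes G$-action on $P^{(1)}$, and a short computation shows that the fibres of $\Pi$ are exactly the $\star$-orbits in $B$. Together these establish that $\Pi$ is an $(H\rtimes G)$-principal bundle, i.e. Condition 1 of Definition \ref{defn:bundlegerbeovergrp}.

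The remaining and most laborious point is Condition 2, the $(H\rtimes G)$-equivariance of the product,
$$((h_2,\rho(b_2))\star b_2)\circ_\mu((h_1,\rho(b_1))\star b_1)=\left((h_2,\rho(b_2))\cdot(h_1,\rho(b_1))\right)\star(b_2\circ_\mu b_1),$$
for composable $b_2,b_1$. I would expand both sides into $P^{(1)}$ by substituting the definitions of $\star$, $\circ_\mu$ and the semidirect product $\cdot$, and then bring them to a common normal form. Exactly as in the proof of Proposition \ref{thm:pb.to.grb}, the argument proceeds by repeatedly commuting the $G$-factors $(e,g)$ past the groupoid product (using that they act by automorphisms) and by re-collecting the $H$-components with the two crossed-module relations $d(C_g h)=g\,d(h)\,g^{-1}$ and $C_{d(h)}h'=h h' h^{-1}$. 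The hard part will be this bookkeeping: the manipulations are identical in spirit to the equivariance computation already carried out for $\Phi$, and I expect the only genuine difficulty to be organising the conjugations so that both sides collapse to the same element. Once this is done, the three steps together show that $\Xi(P^{(1)})=B$ satisfies Definition \ref{defn:bundlegerbeovergrp} and is therefore an $(H\rtimes G)$-bundle gerbe over $Y^{(1)}$.
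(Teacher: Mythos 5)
Your proposal is correct and follows essentially the same route as the paper's proof: like the paper, you obtain the surjective submersion property of $\Pi$ from the commuting square with $\varphi$, inherit smoothness, freeness and associativity from the Lie $2$-group action and the composition on $P^{(1)}$, and reduce the $(H\rtimes G)$-equivariance of $\circ_\mu$ to the computation already carried out for Proposition \ref{thm:pb.to.grb} (the paper makes this reduction via the remark that the formulas for $\star$ and $\circ_\mu$ arise by setting $g=\rho(b)$, $g_1=\rho(b_1)$ there). The additional details you supply --- the explicit form $\bs(b)=(e_G,y_1)$, $\bt(b)=(\rho(b)^{-1},y_2)$, the identities $\rho(b_2\circ_\mu b_1)=\rho(b_2)\rho(b_1)$ and $\rho((h,\rho(b))\star b)=d(h)\rho(b)$ via $d(C_gh)=g\,d(h)\,g^{-1}$, associativity through the fact that $\xi\mapsto(e,g)\cdot\xi$ is a groupoid automorphism, and the identification of the fibres of $\Pi$ with the $\star$-orbits --- are all correct and merely make explicit what the paper leaves to the reader.
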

\begin{rk}{ Note that   the above formula for the action $\star$,  resp. the bundle gerbe product $\circ_\mu$   follows from setting $\phi=b$, $g=\rho(b)$, resp. $g_1=\rho(b_1)$  in the formulas for the action, resp. the bundle gerbe product of  Proposition  \ref{thm:pb.to.grb}, which we recall here for convenience:
		$$(h,g)\star (g,\phi):=\left(\,d(h)g \,\, , \,(C_{g^{-1}}h^{-1},e)\cdot \phi \,\right), $$   resp.
	$$(g_2,\phi_2)\circ_\mu (g_1,\phi_1):=\left(\, g_2 \cdot g_1 \,\, , \,\, \left( (e,g_1^{-1})\cdot \phi_2\right) \circ_{P^{(1)}} \phi_1 \,\, \right),$$ and then applying the projection ${\rm pr}_B$.}
\end{rk}

\begin{proof}{ We know that    $\Pi:= (\pi\circ \varphi^{-1})\vert B$ is a surjective submersion. The map  $\varphi$ is a $H\rtimes G$-equivariant isomorphism where $H\rtimes G$ acts on $G\times B$ by a trivial action on $G$ and the groupoid action on $B$ and  $B\to Y^{(1)}$ is a $H\rtimes G$-principal bundle.} 
			
			{As a consequence of the above remark, the proof of the properties of the various operators is similar to that of Proposition \ref{thm:pb.to.grb}:}  
			\begin{itemize}  
		\item $\star$ is a Lie groupoid action  since $\cdot$ is a smooth action. It acts freely since $\cdot$ acts freely.
		\item $\circ_\mu$ is smooth and associative since  $\cdot$ and $\circ$ are smooth and associative.
		\item The bundle gerbe product is $(H\rtimes G)$-equivariant following the same argument used in the proof of Proposition \ref{thm:pb.to.grb}. 
		
	\end{itemize}
\end{proof}
{This gives a functor $\Xi$ from $(H\rtimes G)$-PG groupoids to $(H\rtimes G)$-bundle gerbes by restriction of the PG groupoid morphisms to $B={\rm Ker}(\bf s_G)$.}

\subsection{Conclusions}

Let $\Phi$ be the correspondence of Proposition \ref{thm:pb.to.grb}, $\Psi$ be the correspondence of Proposition \ref{prop:grb.to.pb} and  $\Xi$ be the correspondence of Proposition \ref{prop:btpb.to.grb}. For a fixed Lie 2-group $H\rtimes G$, the correspondences $\Phi,\Psi $ and $ \Xi$ are functors between the (strict)-categories of $H\rtimes G$-principal bundle groupoids and $H\rtimes G$-bundle gerbes.

The correspondences $\Xi$ and $\Psi$ are functors between the categories of base trivial principal bundles with strict morphisms and gerbes with (strict) morphisms. Moreover, they are inverses of each other.

 For a fixed Lie 2-group $H\rtimes G$, the correspondences $\Psi$ and $\Phi$ are functors between the (strict)-categories of $H\rtimes G$-principal bundle groupoids over fiber product groupoids and $H\rtimes G$-bundle gerbes over fiber product groupoids. Although $\Psi$ and $\Phi$ are not inverses of each other, in \cite{NW} it is shown that they give an equivalence of categories, when seen   as $2$-categories with weak equivalences and anafunctors.

{The following statement follows from the above constructions summarised in the diagramms (\ref{eq:diagrammfibration})  and (\ref{eq:diagrammtrivialbase}). 
	
\begin{prop}\label{prop:bg.pq} Let $H\rtimes G$ be a Lie 2-group and $P^{(1)}\fto{ Y^{(1)}}$  a principal $(H\rtimes G)$-principal bundle groupoid. The bundle gerbes $\Phi(P^{(1)})$ and $\Xi(P^{(1)})$ are   equivalent  to the partial quotient $P^{(1)}/_G$ of Definition \ref{def:gauge.grpd}. { To show this we use Definition\ref{defn:Mor2}:} \begin{enumerate}
 	\item If $Y^{(1)}=Y^{[2]}$ then  the bundle gerbe $B:=\Phi(P^{(1)}){=G\times P^{(1)}}$ is isomorphic (as a groupoid) to the pullback by $Q:P\fto Y$ of the partial quotient $P^{(1)}/_G$; this is
 	$$B\cong P \,_Q \! \times_\bt (P^{(1)}/_G) \,_\bs \! \times_Q P,$$
 	with the isomorphism given by $(g,\gamma)\mapsto (g\,\bt(\gamma),[\gamma],\bs(\gamma))$ for any $g\in G$ and $\gamma\in P^{(1)}$.
 	\item
 	If  $P\simeq G\times Y$, so if  $P^{(1)}\to Y^{(1)}$ is a trivial base  PB groupoid, the bundle gerbe $B':=\Xi(P^{(1)})=   P^{(1)}\vert {{\rm Ker}(\bs_G)}$ is isomorphic to the partial quotient $P^{(1)}/_G$.
 	\end{enumerate}\end{prop}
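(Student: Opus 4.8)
The plan is to prove each item by exhibiting an explicit groupoid isomorphism and then reading off the asserted equivalence; the content is essentially bookkeeping with the crossed-module twists appearing in the formulas of Propositions \ref{thm:pb.to.grb} and \ref{prop:btpb.to.grb}, the point being that these twists are exactly the moves that select composable representatives in the $\{e_H\}\rtimes G$-orbits.

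For item (1) I would take the stated map $\Theta\colon G\times P^{(1)}\fto P\,{}_Q\!\times_\bt(P^{(1)}/_G)\,{}_\bs\!\times_Q P$, $(g,\gamma)\mapsto(g\cdot\bt(\gamma),[\gamma],\bs(\gamma))$, and verify three things. First, well-definedness: since $Q$ is $G$-invariant one has $Q(g\cdot\bt(\gamma))=Q(\bt(\gamma))=\bt([\gamma])$ and $\bs([\gamma])=Q(\bs(\gamma))$, so the image lands in the fibre product. Second, bijectivity: given $(p_2,[\gamma],p_1)$, freeness of the $G$-action (Proposition \ref{prop:gaugeprincipal}) lets me adjust the representative of $[\gamma]$ by a unique element of $\{e_H\}\rtimes G$ so that its source becomes $p_1$, and then choose the unique $g\in G$ with $g\cdot\bt(\gamma)=p_2$; smoothness of the inverse follows from $Q$ being a principal $G$-bundle. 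Third, compatibility with the groupoid structures: recalling from Proposition \ref{thm:pb.to.grb} that $\bt(g,\phi)=g\cdot\bt(\phi)$ and $\bs(g,\phi)=\bs(\phi)$, the source and target match on the nose, and for the composition I would use that the twist $(e,g_1^{-1})$ in the bundle-gerbe product $\circ_\mu$ makes $(e,g_1^{-1})\cdot\phi_2$ a representative of $[\phi_2]$ composable with $\phi_1$ in $P^{(1)}$, whence $[((e,g_1^{-1})\cdot\phi_2)\circ\phi_1]=[\phi_2]\circ[\phi_1]$ in $P^{(1)}/_G$. This is precisely the middle entry of $\Theta(g_2,\phi_2)\circ\Theta(g_1,\phi_1)$ computed in the pullback groupoid. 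Finally, since the pullback of $P^{(1)}/_G$ along the surjective submersion $Q\colon P\fto Y$ is Morita equivalent to $P^{(1)}/_G$, this groupoid isomorphism yields the asserted equivalence in the sense of Definition \ref{defn:Mor2}.

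For item (2) I would exploit that $B=\Xi(P^{(1)})={\rm Ker}(\bs_G)$ is a global transversal to the $\{e_H\}\rtimes G$-orbits in $P^{(1)}$. Indeed, writing $\bs(\xi)=(a,y)\in G\times Y$ one has $\bs_G((e,g)\cdot\xi)=g\,a$, so $g=a^{-1}=\bs_G(\xi)^{-1}$ is the unique element carrying $\xi$ into ${\rm Ker}(\bs_G)$; the second component of the diffeomorphism $\varphi$ of (\ref{eq:varphi}) therefore descends to a diffeomorphism $q\colon B\fto P^{(1)}/_G$, $b\mapsto[b]$, with inverse $[\xi]\mapsto\bs_G(\xi)^{-1}\cdot\xi$. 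It then remains to match the groupoid structures. Since $\pi$ covers $Q={\rm pr}_Y$, the source and target of $B$ viewed as a groupoid over $Y$ are ${\rm pr}_Y\circ\bs$ and ${\rm pr}_Y\circ\bt$, which coincide under $q$ with the source and target of $P^{(1)}/_G$, and the two composability conditions agree. For the composition itself I would again observe that the twist $(e,\rho(b_1)^{-1})$ in the product $\circ_\mu$ of Proposition \ref{prop:btpb.to.grb} produces from $b_2$ the unique representative of $[b_2]$ whose source equals $\bt(b_1)$; here one uses $\rho(b_1)^{-1}={\rm pr}_G(\bt(b_1))$ together with $\bs(b_2)=(e_G,\cdot)$. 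Hence $q(b_2\circ_\mu b_1)=[b_2]\circ[b_1]$, and a short check that $\bs(b_2\circ_\mu b_1)=\bs(b_1)$ lies in ${\rm Ker}(\bs_G)$ closes the argument, giving a genuine groupoid isomorphism $B\cong P^{(1)}/_G$.

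The main obstacle is not conceptual but the careful reconciliation of the two twisted composition laws with the quotient composition: one must verify that the explicit $\{e_H\}\rtimes G$-twists built into $\circ_\mu$ (and into the action $\star$) are exactly the moves that choose composable orbit representatives, so that passing to $G$-classes erases them. Once this is established, both isomorphisms follow, and the Morita statement in item (1) is the standard fact that a pullback along a surjective submersion is a weak equivalence, together with the already-proven properties of $\Phi$, $\Xi$ and of the partial quotient in Propositions \ref{thm:pb.to.grb}, \ref{prop:btpb.to.grb} and \ref{prop:gaugeprincipal}.
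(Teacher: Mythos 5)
Your proof is correct and takes essentially the approach the paper intends: the paper states the explicit isomorphisms in Proposition \ref{prop:bg.pq} itself and merely remarks that the result ``follows from the above constructions'' of Propositions \ref{thm:pb.to.grb} and \ref{prop:btpb.to.grb}, with the equivalence obtained via Definition \ref{defn:Mor2} and the Morita equivalence of a pullback along a surjective submersion (Example \ref{ex:Y[1]}). Your detailed verification that the twists $(e,g_1^{-1})$ and $(e,\rho(b_1)^{-1})$ in $\circ_\mu$ are exactly the choices of composable orbit representatives, and your identification of ${\rm Ker}(\bs_G)$ as a global transversal to the $\{e_H\}\rtimes G$-orbits via the diffeomorphism (\ref{eq:varphi}), correctly supplies the bookkeeping the paper leaves implicit.
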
}

\section{Nerve of a principal bundle groupoid}\label{sec:PBnerve}
{  This section  discusses  higher analogs of the principal bundle groupoids introduced in the previous one. We first review  some   concepts used to build these higher analogs.}
\subsection{Nerve of a small category}

Let $\mathbf \Delta$ denote the category whose objects are finite ordered sets $[n]=\{0,1,2,\cdots, n\}$ and whose morphisms are order-preserving functions $[m]\to [n]$. One defines the injection   $d_i\colon [n-1]\to [n]$ omitting $i\in [n]$ and the surjection $e_i:[n]\to [n-1]$  repeating $i\in [n-1]$.

A {\bf simplicial   set}  in a category $\mathbf C$ is  a  contravariant  functor $X\colon {\bf  \Delta} \to \mathbf C$  (equivalently,  a  covariant  functor  $X\colon {\bf  \Delta}^{op} \to \mathbf C$). An element $x_k\in  X^{(k)}:=X([k]) $ is called an {\bf  $k$-simplex}.   A  morphism  of  simplicial objects in ${\bf C}$ is a natural transformation of such functors.

Concretely, a simplicial object in the category ${\mathbf C}$  amounts to a sequence $C^{(k)}, k\in \mathbb Z_{\geq 0}$ of objects in ${\mathbf C}$ called {\bf simplices} and a collection of morphisms called {\bf face maps} $d_k\colon C^{(k)}\to C^{(k-1)}$ and {\bf degeneracy maps} $e_k\colon C^{(k-1)}\to C^{(k)}$  which obey the following conditions (see e.g. \cite{F})
\begin{eqnarray*}
 d_i\, d_j= d_{j-1}\, d_i \quad {\rm for}\, i<j \,\, ,& &d_i\, e_j= e_{j}\, d_{i-1} \quad {\rm for}\, i>j+1 \,\, , \\
	e_i\, e_j= e_{j+1}\, e_i   \quad {\rm for}\, i> j\,\, ,& & 	d_j\, e_j= d_{j+1}\, e_j={\rm Id}\,\,,\\
d_i\, e_j= s_{j-1}\, d_i \quad {\rm for}\, i<j \,\,. & & 
\end{eqnarray*}

Let $\mathcal{C}$ be a category. The {\bf nerve} $N^{\bullet}( M^{(1)})=M^{\bullet}$ of a small category $M^{(1)}$ in $\mathcal C$ is a simplicial set (in the category of sets). The $0$-simplices $M^{(0)}$ are given by the points in $M$, the $1$-simplices  by the arrows $M^{(1)}$, its $k$- simplices are the sets of $k$ arrows morphisms in $M^{(1)}$, i.e. they are the sets
$$M^{(k)}=N^k(M^{(1)}):=M^{(1)}\!_s \!\times_t \overset{k}{\dots} _s \!\times_t M^{(1)} = \left\{ (\alpha_1, \cdots, \alpha_k)\in (M^{(1)})^k \st \alpha_i \text{ is composable with } \alpha_{i+1}\right\}.$$ 

For $k=1$, $d_0$, resp. $d_1$ is the source, resp. the target map from the arrows to the points of the category.  For $k\geq 2$, the two outer face maps $d_0$, respectively $d_{k+1}$, are defined by forgetting the first, respectively the last morphism in such a sequence. The $k$ inner face maps $d_j, j=1, \cdots, k$  are given by composing the $j$-th morphism with the $j+1$-st morphism in the sequence. The degeneracy maps $e_j, j=0, \cdots, k+1$ are given by inserting an identity morphism on $x_j$.

\begin{rk}
	The nerve alone contains all the information needed to reconstruct the small category $M^{(1)}$.  If the big category $\mathcal C$ is closed by fiber products then $M^{\bullet}$ is a simplicial set in $\mathcal C$.
\end{rk}  

{\bf  Nerves of  groupoids}, which we shall consider in this paper, are in one to one correspondence with Kan complexes. These are simplicial sets $X$ with the property that, for   $0< k\leq n$, any morphism of simplicial sets $  \Delta_k^n\to X$, where $\Delta_k^n$ is obtained by removing the interior of $\Delta^n $ and that of the faces $d_k \Delta^n$,   can be extended to  simplicial morphism $\Delta^n \to X$. 

\subsection{The nerve of a Lie $2$-group }

Associated to any Lie $2$-group $G^{(1)}\soutar G$, there is a simplicial manifold $N^{  \bullet } ({G^{(1)}})=G^{\bullet}$. It  is a simplicial set in the category of Lie groups i.e.,  the $k$-th nerve $N^{k}({G})=G^{(k)}=G^{(1)} \!_s \!\times_t \overset{k}{\dots} _s \!\times_t G^{(1)}$  is endowed with  the structure of a Lie group via the  coordinate-wise multiplication, and the faces and degeneracy maps are Lie group  morphisms. The nerve of a Lie-2 group $G^{(1)}$ is a simplicial set on the category of Lie groups.

We shall give a few properties of the nerve of a Lie 2-group which even if elementary will be useful for the sequel. 

The Lie $2$-group $G^{(1)}=H\rtimes G$ defines other two distinct simplicial sets in the category of Lie groups:
\begin{enumerate}
	\item The family of Lie groups given by $\left\{H\rtimes\left(\overset{k-1}{\dots} \rtimes (H\rtimes G)\right)\right\}_{k\geq 0}$ together with face and degeneracy maps given by:
	\begin{itemize}
		\item $\delta_i^k(h_k,\dots, h_1,g)=(h_k,\dots,h_{i+1}h_{i},\dots,h_1,g)\in H\rtimes\left(\overset{k-1}{\dots} \rtimes (H\rtimes G)\right)$,
		\item $e_j^k(h_k,\dots, h_1,g)=(h_k,\dots,h_{j+1}, e_H,h_{j},\dots, h_1,g)\in H\rtimes\left(\overset{k-1}{\dots} \rtimes (H\rtimes G)\right)$,
	\end{itemize}
	for all $0<i< k$ and $0\leq j\leq k$.
	\item The family of Lie groups given by $\{ H^k\rtimes G\}_{k\geq 0}$ together with:
	\begin{itemize}
		\item Face maps: $\delta_i^k({ \bf h}_k,.., { \bf h}_1,g)=({ \bf h}_k,..,\widehat{ \bf h}_{i},\dots,{\bf  h}_1,g)\in H^{k-1}\rtimes G$,
		\item Degeneracy maps: $e_j^k({\bf h}_k,.., {\bf  h}_1,g)=({\bf  h}_k,..,{\bf  h}_j, { \bf h}_j,.., { \bf h}_1,g)\in  H^{k+1}\rtimes G$,
	\end{itemize}
	for all $0<i< k$ and $0\leq j\leq k$.
\end{enumerate}

These three simplicial sets in the category of Lie groups are isomorphic.

\begin{prop}\label{prop:nerve.2-grp}
	The following maps define an isomorphism of simplicial sets in the category of Lie groups for the Lie-2 group $G^{(1)}\cong H\rtimes G$.
	\begin{eqnarray}
		G^{(k)}\quad \quad \quad\quad & \fto &  H\rtimes\left(\overset{k-1}{\dots} \rtimes (H\rtimes G)\right) \\
		(h_k, t(h_{k-1}\cdots h_1)g)\times\cdots\times (h_1,g) &\mapsto& (h_k,\dots ,h_1,g). \nonumber
	\end{eqnarray}
	\begin{eqnarray}
		H\rtimes\left(\overset{k-1}{\dots} \rtimes (H\rtimes G)\right) & \fto & H^k \rtimes G\\
		(h_k,..,h_1,g) &\mapsto & ({ \bf h}_k,..,{ \bf h}_2,{ \bf h}_1,g):=(h_k\cdots h_1,..,h_2 h_1,h_1,g). \nonumber
	\end{eqnarray}
\end{prop}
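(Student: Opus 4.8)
The plan is to check, level by level in $k$, that each of the two displayed maps is (a) a diffeomorphism, (b) a group homomorphism, and (c) compatible with the face and degeneracy maps; together these three facts say precisely that we have isomorphisms of simplicial objects in the category of Lie groups. Throughout I write $\Theta_k$ for the first map $G^{(k)}\fto H\rtimes(\cdots\rtimes(H\rtimes G))$ and $\Xi_k$ for the second, $\fto H^k\rtimes G$, and I set $\gamma_i:=h_{i-1}\cdots h_1$ (with $\gamma_1=e_H$) and $\mathbf{h}_i:=h_i\cdots h_1$.

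Bijectivity and smoothness are the easy part. The composability condition in the Lie $2$-group $H\rtimes G\soutar G$ of Proposition \ref{prop:cr.to.grpd} forces $g_{i+1}=d(h_i)\,g_i$, hence by induction $g_i=d(\gamma_i)\,g$ with $g=g_1$; this shows that a composable chain is determined by $(h_k,\dots,h_1,g)$ and exhibits the smooth inverse of $\Theta_k$. Likewise $\Xi_k$ has the explicit smooth inverse $\mathbf{h}_i\mapsto h_i=\mathbf{h}_i\,\mathbf{h}_{i-1}^{-1}$ (with $\mathbf{h}_0=e_H$), so both maps are diffeomorphisms at each level.

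The heart of the argument is the homomorphism property, and the one identity that makes everything collapse is the crossed-module relation $C_{d(h)}h'=h\,h'\,h^{-1}$ of Definition \ref{defn:cross module}. Since $g_i=d(\gamma_i)\,g$ we get $C_{g_i}=C_{d(\gamma_i)}\circ C_g$, so the coordinatewise product in $G^{(k)}$, whose $i$-th $H$-component is $h_i\,C_{g_i}(h_i')$, rewrites as $h_i\,\gamma_i\,C_g(h_i')\,\gamma_i^{-1}=\mathbf{h}_i\,C_g(h_i')\,\mathbf{h}_{i-1}^{-1}=:\tilde h_i$. I would feed this into $\Xi_k\circ\Theta_k$ and compute the $\mathbf{h}$-coordinates of a product: the telescoping product $\tilde h_i\tilde h_{i-1}\cdots\tilde h_1$ collapses to $\mathbf{h}_i\,C_g(h_i'\cdots h_1')=\mathbf{h}_i\,C_g(\mathbf{h}_i')$, which is exactly the $i$-th component of $(\mathbf{h}_k\,C_g\mathbf{h}_k',\dots,\mathbf{h}_1\,C_g\mathbf{h}_1',gg')$ in $H^k\rtimes G$ with its diagonal $C$-action. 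This simultaneously verifies that $\Xi_k\circ\Theta_k$ is a homomorphism and, reading off the intermediate coordinates, confirms that the group law on $H\rtimes(\cdots\rtimes(H\rtimes G))$ is the iterated semidirect product whose successive outer factors act through $(h,g)\cdot h''=h\,C_g(h'')\,h^{-1}$; with that structure $\Theta_k$ and $\Xi_k$ are each homomorphisms.

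Finally I would verify simplicial naturality. On $G^{(k)}$ an inner face $d_j$ composes the $j$-th and $(j{+}1)$-st arrows, which in the $(h_k,\dots,h_1,g)$ coordinates is exactly the merging $\delta_j(h_k,\dots,h_1,g)=(h_k,\dots,h_{j+1}h_j,\dots,h_1,g)$, so $\Theta$ intertwines the inner faces on the nose; a short index computation then shows that merging positions $j{+}1,j$ turns the new $\mathbf{h}$-coordinate at position $j$ into $\mathbf{h}_{j+1}$ and shifts the higher ones down, so that $\Xi$ intertwines the merge with the ``forget $\mathbf{h}_j$'' face $\delta_j(\mathbf{h}_k,\dots,\mathbf{h}_1,g)=(\mathbf{h}_k,\dots,\widehat{\mathbf{h}_j},\dots,\mathbf{h}_1,g)$. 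The outer faces $d_0,d_k$ (forgetting the first, resp. last arrow) and the degeneracies $e_j$ (inserting an identity arrow $(e_H,g_j)$, which under $\Xi$ repeats $\mathbf{h}_j$) are checked the same way. The main obstacle I anticipate is purely bookkeeping: getting the telescoping in the homomorphism step exactly right and keeping the index shifts consistent across the three models in the face-map check; it is the crossed-module identity that removes all the $C_{g_i}$-twists and makes the telescoping collapse.
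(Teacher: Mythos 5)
The paper states Proposition \ref{prop:nerve.2-grp} without any proof, so there is nothing to compare your argument against; what matters is whether your verification is sound, and it is. The three ingredients all check out: composability in the nerve forces $g_{i+1}=d(h_i)g_i$, hence $g_i=d(h_{i-1}\cdots h_1)\,g$, which exhibits smooth inverses for both maps; the crossed-module identity $C_{d(h)}h'=h\,h'\,h^{-1}$ converts each twist $C_{g_i}$ into conjugation by $\gamma_i=\mathbf{h}_{i-1}$, and your telescoping $\tilde h_i\cdots\tilde h_1=\mathbf{h}_i\,C_g(\mathbf{h}_i')$ is correct and does double duty — it proves the homomorphism property and simultaneously identifies the group law on $H^k\rtimes G$ as the semidirect product for the \emph{diagonal} $C$-action, a structure the paper's bare notation $H^k\rtimes G$ leaves implicit and your computation usefully pins down. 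Your face/degeneracy bookkeeping also matches the maps $\delta^k_i$, $e^k_j$ displayed before the proposition: merging $h_{i+1}h_i$ leaves all cumulative products unchanged except that $\mathbf{h}_i$ disappears, and inserting $e_H$ repeats $\mathbf{h}_j$ (the paper only specifies the inner faces and the degeneracies for the two auxiliary models, so your brief treatment of the outer faces is all that can be asked). Two small points: your label $\Xi_k$ collides with the paper's $\Xi$ (the functor of Proposition \ref{prop:btpb.to.grb} and the maps in (\ref{eq:Xik})), so rename it; and for $k\geq 3$ it is worth making explicit that the outer $H$ at level $j$ is acted on by the \emph{entire} inner iterated product through the homomorphism $(h_{j-1},\dots,h_1,g)\mapsto C_{d(h_{j-1}\cdots h_1)\,g}$ — the iterated target map composed with $C$ — which is the precise content of your phrase ``acts through $(h,g)\cdot h''=h\,C_g(h'')\,h^{-1}$'' and is what guarantees the intermediate model is a genuine (Lie) semidirect product.
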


\subsection{The nerve of a PB groupoid and of its partial quotient groupoid}

{The following statement is motivated  by  Propositions  \ref{prop:lie2grp.q}.} and \ref{prop:PB groupoidcat}.
\begin{thm} \label{prop:princ.2.bnd}
	{ The nerve of a principal bundle groupoid is a collection of principal bundles given by the nerves of the groupoids:
		\[\left[P^{(1)}\underset{ G^{(1)}}{ \longrightarrow} M^{(1)}\right]\rightsquigarrow \left[P^{\bullet}\underset{G^{\bullet}}{ \longrightarrow} M^{\bullet}\right]. \]}
	
	{Moreover}, a principal (${G^{(1)}} \soutar G$)- bundle groupoid ($P^{(1)}\soutar P$) over ($M^{(1)}\soutar M$) defines a simplicial set in the $(1)$-principal bundle category, given by ${G^{\bullet}}$ acting on $P^{\bullet}$ with quotient space being $M^{\bullet}$.	
\end{thm}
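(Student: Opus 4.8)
The plan is to prove the statement by applying the nerve functor levelwise and verifying that, at each level $k$, the data $G^{(k)}\curvearrowright P^{(k)}\to M^{(k)}$ is a principal bundle and that the simplicial face and degeneracy maps are morphisms of principal bundles. The conceptual backbone is Remark \ref{prop:PB groupoidcat}: a principal bundle groupoid is a groupoid object in the category of principal bundles, and the nerve of a groupoid object in a category closed under the relevant iterated fibre products is a simplicial object in that category. The work therefore lies in checking that the category of principal bundles really is closed under the fibre products $P^{(k)}=P^{(1)}\,{}_\bs\!\times_\bt\overset{k}{\cdots}{}_\bs\!\times_\bt P^{(1)}$ appearing in the nerve, with the structure group at level $k$ being $G^{(k)}$.

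First I would fix the structure group at each level. By Proposition \ref{prop:nerve.2-grp} the nerve $G^{(k)}=N^k(G^{(1)})$ is a Lie group under coordinate-wise multiplication and $G^\bullet$ is a simplicial Lie group. I would then define the $G^{(k)}$-action on $P^{(k)}$ coordinate-wise, $(\gamma_1,\dots,\gamma_k)\cdot(\phi_1,\dots,\phi_k):=(\gamma_1\cdot\phi_1,\dots,\gamma_k\cdot\phi_k)$, and verify that it is well defined, i.e.\ that it preserves the composability condition cutting out $P^{(k)}\subseteq (P^{(1)})^k$. This is exactly where the hypothesis of Definition \ref{defn:Lie2geoupaction}, that the action map is a groupoid morphism and hence compatible with source and target, enters: the identities $\bs(\gamma\cdot\phi)=\bs(\gamma)\cdot\bs(\phi)$ and $\bt(\gamma\cdot\phi)=\bt(\gamma)\cdot\bt(\phi)$ convert composability of the $\gamma_i$ in $G^{(k)}$ together with composability of the $\phi_i$ in $P^{(k)}$ into composability of the $\gamma_i\cdot\phi_i$.

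Next I would establish that $P^{(k)}\to M^{(k)}$ is a principal $G^{(k)}$-bundle. Freeness is immediate from level-one freeness applied in each coordinate. Properness follows from properness of the level-one action together with the fact that $G^{(k)}$ and $P^{(k)}$ are closed submanifolds of $(G^{(1)})^k$ and $(P^{(1)})^k$, so that the relevant action map is a restriction of a proper product map. The heart of the matter is the identification $P^{(k)}/G^{(k)}\cong M^{(k)}$: surjectivity onto $M^{(k)}$ uses the fibration property of the quotient map from Proposition \ref{prop:lie2grp.q} to lift a composable string of arrows in $M^{(1)}$ to a composable string in $P^{(1)}$, whereas transitivity of $G^{(k)}$ on the fibres requires showing that the tuple of correction group elements relating two lifts of the same point actually lies in $G^{(k)}$, i.e.\ is composable. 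I would deduce this composability from the compatibility of the action with source and target combined with freeness of the level-zero $G$-action on $P$.

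Finally I would check that the simplicial structure maps are morphisms of principal bundles over the corresponding structure maps of $G^\bullet$. The outer face maps $d_0,d_k$ are projections (and source, target for $k=1$) and are manifestly equivariant; the degeneracies insert units and are equivariant because the action respects identities; the inner faces compose adjacent arrows and are equivariant precisely because the action map is a groupoid morphism, so that $(\gamma_j\circ\gamma_{j+1})\cdot(\phi_j\circ\phi_{j+1})=(\gamma_j\cdot\phi_j)\circ(\gamma_{j+1}\cdot\phi_{j+1})$. The main obstacle I anticipate is the transitivity step in the quotient identification: one must verify that the coordinate-wise correction elements assemble into a genuine element of $G^{(k)}$ rather than merely of $(G^{(1)})^k$, and this is the single place where the groupoid structure, the freeness of the base action, and the source/target compatibility of the Lie $2$-group action all have to be used together.
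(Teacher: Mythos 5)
Your proposal is correct and follows essentially the same route as the paper's own ``pedestrian'' proof: both define the $G^{(k)}$-action on $P^{(k)}$ coordinate-wise, obtain the level-$k$ principal bundle $G^{(k)}\curvearrowright P^{(k)}\to M^{(k)}$ by restricting the product structures $(G^{(1)})^k\times(P^{(1)})^k\to(P^{(1)})^k\to(M^{(1)})^k$ to the nerve submanifolds, and check that the face and degeneracy maps are bundle morphisms because the action map is a groupoid morphism. If anything, you are more careful than the paper at one point: your explicit verification that the correction elements relating two lifts are composable, hence lie in $G^{(k)}$ and not merely in $(G^{(1)})^k$ (via source/target compatibility of the action plus freeness of the $G$-action on $P$), makes precise the fibre-transitivity step that the paper's terse restriction argument leaves implicit.
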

\begin{proof} By functoriality, this follows from  the fact that the action map $G^{(1)}\times P^{(1)}\fto P^{(1)}\fto M^{(1)}$ is a Lie groupoid morphism. Nonetheless, we  give a pedestrian proof of the statement.\\
	{\bf Simplicial set:} 
	We note  that the action map extends to a map of simplicial sets:
	$$(G^{(1)}\times P^{(1)} )^{(k)}\fto P^{(k)}\fto M^{(k)}\quad \forall k\in \mathbb N,$$
	Moreover, $( G^{(1)}\times P^{(1)})^{(k)}\cong G^{(k)}\times P^{(k)}$, which yields a map of simplicial sets:
	$$G^{(k)}\times P^{(k)} \fto P^{(k)}\fto M^{(k)} \quad \forall k\in \mathbb N.$$
	
	\noindent{\bf Simplicial set on Principal bundles:} Since the action map $G^{(1)}\times P^{(1)}\fto P^{(1)}$ is a group action, the maps given by the cartesian product $(G^{(1)})^{k}\times(P^{(1)})^{k}\fto (P^{(1)})^{k}$ are also group actions. From which it follows that the restriction to the submanifolds $ G^{(k)}\times P^{(k)}\fto P^{(k)}$ are also group actions $ \forall k\in \mathbb N $.
	
	Moreover, $(G^{(1)})^k\times (P^{(1)})^k\fto (P^{(1)})^k\fto (M^{(1)})^k$ are principal bundles and hence so do the following submanidols build   principal bundles:
	$$G^{(k)}\times P^{(k)}\fto P^{(k)}\fto M^{(k)},\quad \forall k\in \mathbb N.$$
	
	For a groupoid $\CA\in \{G^{(1)},P^{(1)},M^{(1)}\}$, let $d_i^\CA:\CA^{(k)}\rightarrow \CA^{(k-1)}$ and $e_i^{\CA}:\CA^{(k-1)}\rightarrow \CA^{(k)}$ denote the face and degeneracy maps respectively. The maps $d_i^G\times d_i^P\colon  G^{(k)}\times P^{(k)}\mapsto G^{(k-1)}\times P^{(k-1)}$ and $e_i^G\times e_i^P\colon G^{(k-1)}\times P^{(k-1)} \mapsto G^{(k)}\times P^{(k)} $ are the face and degeneracy maps of the cartesian product groupoid $P^{(1)}\times G^{(1)}$. Consider the following   diagrammes  here combined in one sole diagramme, each of which involves   the face maps $d_i$, resp. the degeneracy maps $e_i$: \\
	$$\begin{tikzcd}
		G^{(k)}\times P^{(k)}  \ar[r]\ar[d,shift right=.2em, swap,"d_i^P\times d_i^G"] & P^{(k)} \ar[r] \ar[d,shift right=.2em, swap, "d_i^P"] & M^{(k)} \ar[d,shift right=.2em, swap, "d_i^M"] \\
		G^{(k-1)}\times P^{(k-1)}\ar[r] \ar[u,shift right=.2em, swap,"e_i^P\times e_i^G"] & P^{(k-1)} \ar[r] \ar[u,shift right=.2em, swap,"e_i^P"]& M^{(k-1)} \ar[u,shift right=.2em, swap,"e_i^M"]
	\end{tikzcd} \hspace{.2in} \forall k\in \KN.$$
	They commute in so far as the  the face maps $d_i$, resp. the degeneracy maps $e_i$, commute with the horizontal ones since the horizontal arrows are simplicial sets morphisms coming from a groupoid morphism. Therefore, the maps $(d_i^G,d_i^P,d_i^M)$ and $(e_i^G,e_i^P,e_i^M)$ are principal bundle morphisms.
\end{proof} 

{The subsequent statement follows from Proposition  \ref{prop:gaugeprincipal} combined with Theorem \ref{prop:princ.2.bnd}.}
\begin{prop} \label{prop:NerveG}   With the notations of Proposition \ref{prop:gaugeprincipal}, the projection $N^\bullet(Q)\colon N^\bullet\left({P^{(1)}}\right)\longrightarrow N^\bullet\left({ {P^{(1)}}/_G}\right)$ induced by $Q$ is a simplicial set of principal $G$-bundles and $N^\bullet(Q)$ induces the following isomorphisms for any $k\in \mathbb N$:
	
	\begin{eqnarray}\label{eq:GkGk-11}  	\tilde{N}^k(Q):  \left( N^k( { {P^{(1)}}})\right)  /_{G} \   &\overset{\simeq}{\longrightarrow} &  N^k\left( { {P^{(1)}}/_G}\right) \nonumber \\
		\left[\phi_1,   \cdots, \phi_k\right] &\longmapsto&
		\left([\phi_1],[\phi_2],\cdots, [\phi_k]\right).
	\end{eqnarray}
	Moreover, the face maps on the nerve of ${P^{(1)}}$	give rise to face maps $\partial_i^k$ for any $k\in \mathbb N$:
	
	\begin{eqnarray}\label{eq:faces}\partial_i^k:  N^k\left({P^{(1)}}\right) /_{ {G}} &\longrightarrow &  N^{k-1}\left( {P^{(1)}}\right)  /_{ {G}}\nonumber \\
		\left[\phi_1,   \cdots, \phi_k \right] &\longmapsto&   \left[\phi_1,\cdots,\phi_i\circ \phi_{i+1}, \cdots, \phi_{k+1}\right]. 
	\end{eqnarray}

	Denoting by $\delta_i^k$ the face maps of the groupoid $G^{(k)}$, the subsequent diagramme commutes: 
	\[\begin{tikzcd}
		N^{k}\left( { {P^{(1)}}/_G} \right)\arrow[r, "\delta_i^k"] &    N^{k-1}\left( { {P^{(1)}}/_G}\right)   \nonumber \\
		N^{k}\left( {P^{(1)}}\right)/_{G} \arrow[r, "\partial_i^k"]  \arrow[u, " N^ {k}(Q)"]&  N^{k-1}\left( {P^{(1)}}\right)/_{ G} \arrow[u,swap, " N^ {k-1}(Q)"]
	\end{tikzcd}\]
\end{prop}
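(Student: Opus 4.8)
The plan is to specialise the two cited results to the principal $G$-bundle groupoid $Q\colon P^{(1)}\to P^{(1)}/_G$ furnished by Proposition \ref{prop:gaugeprincipal}, which is exactly the case $H=\{e_H\}$ of Definition \ref{defn:Lie3pb}, the structure $2$-group being the identity groupoid $G\soutar G$ (see Example \ref{eq:H=e}). Since the nerve of the identity groupoid on $G$ satisfies $G^{(k)}\cong G$ for every $k$, Theorem \ref{prop:princ.2.bnd} applied to $Q$ immediately gives that $N^\bullet(Q)$ is a simplicial object in the category of principal $G$-bundles: at each level $N^k(Q)\colon P^{(k)}\to N^k(P^{(1)}/_G)$ is a principal $G$-bundle whose fibrewise action is the diagonal $G$-action, so its base is canonically the quotient $P^{(k)}/G=N^k(P^{(1)})/_G$. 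This identification is the candidate map $\tilde N^k(Q)$, because $N^k(Q)$ sends a composable chain $(\phi_1,\dots,\phi_k)$ to $(Q^{(1)}(\phi_1),\dots,Q^{(1)}(\phi_k))=([\phi_1],\dots,[\phi_k])$ and is constant on diagonal $G$-orbits.

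The crux is to check that $\tilde N^k(Q)$ is genuinely a bijection, which I would do by hand. For injectivity, suppose two composable chains $(\phi_1,\dots,\phi_k)$ and $(\psi_1,\dots,\psi_k)$ have the same image, so $\psi_i=g_i\,\phi_i$ for some $g_i\in G$. Using that the $G$-action is by groupoid morphisms, hence $\bs(g\,\phi)=g\cdot\bs(\phi)$ and $\bt(g\,\phi)=g\cdot\bt(\phi)$, the composability conditions $\bs(\psi_i)=\bt(\psi_{i+1})$ and $\bs(\phi_i)=\bt(\phi_{i+1})$ force $g_i\cdot\bs(\phi_i)=g_{i+1}\cdot\bs(\phi_i)$; freeness of the $G$-action on $P$ then yields $g_i=g_{i+1}$ for all $i$, so the two chains lie in a single diagonal $G$-orbit. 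For surjectivity, given a composable chain $([\eta_1],\dots,[\eta_k])$ in $N^k(P^{(1)}/_G)$, I would lift it inductively to a composable chain upstairs: setting $\phi_1=\eta_1$ and using that $Q$ is a Lie groupoid fibration (Propositions \ref{prop:gaugeprincipal} and \ref{prop:lie2grp.q}), one adjusts each $\eta_{i+1}$ by the unique $G$-element making $\bt(\phi_{i+1})=\bs(\phi_i)$, producing composable representatives. This lifting step, together with the equivariance needed for the face maps below, is the main obstacle; the remaining identities are formal.

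Finally, for the face and degeneracy structure I would invoke the last assertion of Theorem \ref{prop:princ.2.bnd}, namely that each triple $(d_i^G,d_i^P,d_i^M)$ is a morphism of principal bundles. In particular $d_i^P\colon P^{(k)}\to P^{(k-1)}$ is $G$-equivariant for the diagonal actions, so it descends to a well-defined map $\partial_i^k\colon N^k(P^{(1)})/_G\to N^{k-1}(P^{(1)})/_G$; the inner faces compose adjacent arrows, which gives the stated formula $\partial_i^k[\phi_1,\dots,\phi_k]=[\phi_1,\dots,\phi_i\circ\phi_{i+1},\dots,\phi_k]$, this being well defined precisely because composition is a groupoid operation and therefore commutes with the $G$-action. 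The commutativity of the final square then expresses nothing more than the fact that the bundle morphisms of Theorem \ref{prop:princ.2.bnd} intertwine the projections $N^\bullet(Q)$ with the face maps, i.e.\ that $\delta_i^k\circ\tilde N^k(Q)=\tilde N^{k-1}(Q)\circ\partial_i^k$, which is the naturality of the nerve construction transported through the identification $\tilde N^\bullet(Q)$.
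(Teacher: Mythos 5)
Your proposal is correct and follows essentially the same route as the paper, which derives Proposition \ref{prop:NerveG} precisely by combining Proposition \ref{prop:gaugeprincipal} (the partial quotient as a principal $G$-bundle groupoid, i.e.\ the case $H=\{e_H\}$) with Theorem \ref{prop:princ.2.bnd} applied levelwise, using $G^{(k)}\cong G$ with the diagonal action. Your additional hand-checks of injectivity (via freeness and the action being by groupoid morphisms) and of the inductive lifting for surjectivity merely spell out details the paper leaves implicit, and they are sound.
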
 

We apply the above constructions to  the pair groupoid of a principal bundle.

\begin{ex} Let $P\fto M$ be a  principal $G$-bundle and $P^{(1)}=\CP^{(1)}(P)$, $M^{(1)}=\CP^{(1)}(M)$ be the corresponding pair groupoids. We saw that in this case,  $P^{(1)}/G$ is   the gauge groupoid $\CG(P)$.
	
	The isomorphisms of Proposition \ref{prop:NerveG}   are the following maps:
	\begin{eqnarray}\label{eq:GkGk-1}  	\tilde{N}^k (Q):  {\CP}^{(k)}(P)   /_{  G} \   &\overset{\simeq}{\longrightarrow} & {\CG}^{(k)}(P) \nonumber \\
		{\bf [p]}:=	[p_1,   \cdots, p_{k+1} ]&\longmapsto&  ([p_1,p_2],[p_2,p_3],\cdots, [p_k,p_{k+1}]).
	\end{eqnarray}
	
	Moreover, the face maps on the nerve of the pair groupoid $\CP({P})$
	give rise to face maps $\partial_i^k$:
	\begin{eqnarray}\label{eq:faces}\partial_i^k:{\CP}^{(k)}(P)   /_{G} &\longrightarrow & {\CP^{(k)}(P)}   /_{ G}\nonumber \\
		\left[p_1,   \cdots, p_{k+1} \right]&\longmapsto&   \left[p_1,\cdots,\widehat{p_i}, \cdots, p_{k+1}\right]. 
	\end{eqnarray}  
	Denoting by $\delta_i^k$ the face maps of the partial quotient groupoid $G^{(k)}(P)$, the subsequent diagramme commutes: 
	\[\begin{tikzcd}
		{\CP}^{(k)}(P)/_{ G} \arrow[r, "\de_i^k"] \arrow[d, " Q_k"] &   {\CP}^{(k-1)}(P)/_{ G} \arrow[d, " Q_{k-1}"]\nonumber \\
		{\CG}^{(k)}(P) \arrow[r, "\delta_i^k"] & {\CG}^{(k)}(P)\\
	\end{tikzcd}\]
\end{ex}
	\section{Inner transformations of a PB groupoid} \label{ec:innertrans}
	 
	In this section we investigate gauge transformations of  principal bundle groupoids and specialise to the case of the pair groupoid of a principal bundle manifold.
	
With the notations of Definition  \ref{defn:Lie3pb},  let 
  $P^{(1)}\soutar P  $ be a $H\rtimes G$-principal bundle groupoid over a Lie groupoid $M^{(1)}\soutar M$.

\subsection{Inner transformations of a PB groupoid}\label{sec:inner}

Let us recall that the gauge transformations of   the principal $G$-bundle    $\pi\colon P\rightarrow M$ is the group of principal bundle morphisms over the identity in $M$, i.e.
$$\mathrm{Mor}(P):= \{\phi\in \CI(P,P)\st \pi(p)=\pi(\phi(p))\, ; \,\, \phi(g\, p)=g\, \phi(p)\,\,\, \forall \, g\in G \,,\, p\in P\}.$$

This is a group because there is a well known isomorphism: 
\[ {\rm Mor}(P)\simeq C^\infty_G(P,G),\]
where the multiplication in ${\rm Mor}(P)$ is the composition of morphisms; the set on the r.h.s. are $G$-equivariant smooth maps from $P$ to $G$ with the group multiplication, and $G$ acts by the left action on $P$ and by the (left) adjoint action on $G$. \\

{\it Here and in what follows Aut$(P)$ stands for invertible elements in Mor$(P)$; the mention of a group in the lower index such as $G$ in $C^{\infty}_G$, refers to equivariance maps with respect to the action of the group $G$. A priori ${\rm Aut}(P)\subset {\rm Mor}(P)$, but the isomorphism ${\rm Mor}(P)\simeq C^{\infty}_G(P,G)$ leads to the conclusion ${\rm Aut}(P)={\rm Mor}(P)$. Therefore, in the rest of this paper ${\rm Aut}(-)$ and ${\rm Mor}(-)$ are interchangeable.} \\

Propositions \ref{prop:princ.2.bnd} and \ref{prop:nerve.2-grp} give rise to a simplicial set of  principal bundles whose $k$-nerves are principal $ H^k \rtimes G$-bundle groupoids $P^{(k)}\fto M^{(k)}, k\in \mathbb N$ and  bijections: 	
\begin{equation}\label{eq:Phik}\Psi_k\colon {\rm Aut}(P^{(k)})\overset{\simeq}{\longrightarrow} C^{\infty}_{H^k\rtimes G}(P^{(k)}, H^k\rtimes G), 
\end{equation}

which in turn give rise to group isomorphisms. In a compact form, this reads
\begin{equation}\label{eq:AutNP} {\rm Aut}(P^\bullet)\cong C^{\infty}_{H^\bullet\rtimes G}(P^\bullet,H^\bullet\rtimes G).
\end{equation}
and we refer to ${\rm Aut}(P^\bullet)$ as the group of  {\bf inner transformations of the  nerve of  the   principal bundle groupoid $P^{(1)}$}, or for short the {  \bf inner transformation group of   $P^{\bullet}$}.
\subsection{Inner transformations of the partial quotient of a PB groupoid}\label{sec:innerpartial}

Given a PB-groupoid $H\rtimes G \curvearrowright P^{(1)}\overset{\pi^{(1)}}{\longrightarrow} M^{(1)}$, let $P^{(1)}/_G$ be the partial quotient (as in \ref{eq:GkGk-11}) and let us abuse notation to call the map given by the original PB-groupoid $\pi^{(1)}\colon P^{(1)}/_G{\longrightarrow} M^{(1)}$ again. This is not a $PB$-groupoid, in general there is no Lie 2-group acting on $P^{(1)}/_G$.  Let us denote following sets and maps for every $k\in \mathbb N$:
$${\CI}(P^{(k)}/_G, {P}^{(k)}/_G)^{M^{(k)}}:=\left\{\, \varphi\in \CI(P^{(k)}/_G,P^{(k)}/_G) \st \varphi\circ\pi^{(k)}=\varphi \, \right\},$$
$$  \Pi     _k\colon {\rm Aut}( P^{(k)})\longrightarrow  C^\infty ({P}^{(k)}/_G, {P}^{(k)}/_G)^{M^{(k)}}$$
$$({\vec \phi}\mapsto \varphi({\vec \phi})=\left( \varphi_0({\vec \phi}), \cdots, \varphi_{k+1}({\vec \phi})\right))\longmapsto\left([{\vec \phi}]\mapsto[\varphi({\vec \phi})]=\left[\left( \varphi_0({\vec \phi}), \cdots, \varphi_{k+1}({\vec \phi})\right)\right]\right),$$
\begin{eqnarray} \label{eq:Gammak} \Gamma_k\colon \CI(P^{(k)},H^k\rtimes G)&\longrightarrow &\CI(P^{(k)}, H^k)\nonumber\\
	(h_k, \cdots, h_1,g_0)&\longmapsto&  \left(  
	C_{g_0}h_k,\dots,C_{g_0}h_{1} \right).
\end{eqnarray} 
The maps $\Gamma_k$ clearly preserves equivariant properties w.r.to the partial action of $H^k\rtimes G$ on $P^{(k)}$ and the adjoint action of $H^k\rtimes G$  on $H^k$ by the adjoint action. Therefore, let us consider the maps
\begin{eqnarray}\label{eq:Xik}
	\Xi_k\colon C^{\infty}_G(P^{(k)}, H^{k})&\longrightarrow &\CI({ {P}^{(k)}/_G}, {P}^{(k)}/_G)^{M^{(k)}}\nonumber\\
	({\vec \phi}\mapsto (h_k({\vec \phi}), \cdots, h_1({\vec \phi}))&\longmapsto&  \left([{\vec\phi}]\mapsto \left[  (h_k({\vec\phi}), \cdots, h_1({\vec\phi}),1)\cdot {{\vec\phi}}\,\right]\right),
\end{eqnarray} 
which can easily seen to be   injective.\\

\label{thm:map.kPG2}
The following diagramme commutes: 

\begin{equation}\label{eq:barPhik}
\begin{tikzcd}
\CI_G  (P^{(k)}, H^k) \arrow[d, "    \Xi_k  "] &     \CI_{H^k\rtimes G} (P^{(k)}, H^k\rtimes G)    \arrow[l,swap, " \Gamma_k"]    \\
\CI({ {P}^{(k)}/_G},{ {P}^{(k)}/_G})^{M^{(k)}} &   {\rm Aut} ({P}^{(k)}) \arrow[u, " \overline \Psi_k"] \arrow[l, "   \Pi_k"],  
\end{tikzcd}
\end{equation}
		where  $\overline \Psi_k$ is the map induced by (\ref{eq:Phik}).\\
		
		This yields the following bijections: 
		\[ 
 {\rm Aut}({P}^{(k)})/_{\sim_{\Pi_k}} \cong \, \CI_{H^k\rtimes G} ({P}^{(k)},H^k\rtimes G) /\CI_{H^k\rtimes G} ({P}^{(k)}, \{e\}^k\rtimes G) \,
		\cong  \CI_{H^k\rtimes G} ({P}^{(k)}, H^k)\]

		where $\sim_{\Pi_k}$ is the equivalence relation with classes equal to the fibers of $\Pi_k$. These bijections are isomorphisms of groups.	On the grounds of these identifications, we set  the following definition.
	\begin{defn} \label{defn:MorNG}For any $k\in \mathbb N$, we call  elements of  the sets 
		\begin{equation}\label{eq:MorGk}{\rm  Aut}({ {P}^{(k)}/_G}):= {\rm Aut} ({P}^{(k)})/_{\sim_{\Pi_k}}.
		 \end{equation}
	{\bf  	inner transformations } of $ {P}^{(k)}/_G $ and the resulting family ${\rm  Aut}({ {P^\bullet}/_G}):=   {\rm Aut} (P^\bullet )/_{\sim_{\Pi_\bullet}}$ 
	the  	{\bf  	inner transformations  of} $  { {P^\bullet}/_G} $ i.e. the nerve of the partial quotient groupoid ${ {P^{(1)}}/_G}$.
\end{defn}

\begin{thm}\label{thm:MorNGP2} For any $k\in \mathbb N$, there is a group isomorphism:
		\[{\rm Aut}({ {P}^{(k)}/_G})\simeq 
		\CI_{H^k\rtimes G}({P}^{(k)}, H^k).  \] 
			
		 Correspondingly, inner transformations  of ${P^{\bullet}}$ form a group with the following group isomorphism:
		  \begin{equation}\label{eq:MorNG} {\rm  Aut}({ {P^\bullet} /_G})={\rm Aut}(P^\bullet )/_{\sim_{\Pi_\bullet}}\simeq C^{\infty}_{H^\bullet\rtimes G} ({P^\bullet}, H^\bullet).
		  \end{equation}
\end{thm}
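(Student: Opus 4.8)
The plan is to assemble the group isomorphism of Theorem \ref{thm:MorNGP2} from the pieces already established in the commutative diagram (\ref{eq:barPhik}), treating the $\bullet$-statement (\ref{eq:MorNG}) as the collection of the fixed-$k$ statements together with compatibility under the simplicial face and degeneracy maps. First I would fix $k\in\mathbb N$ and recall that (\ref{eq:Phik}) gives a group isomorphism $\Psi_k\colon {\rm Aut}(P^{(k)})\overset{\simeq}{\to} C^\infty_{H^k\rtimes G}(P^{(k)},H^k\rtimes G)$, where the group law on the right is pointwise multiplication in the Lie group $H^k\rtimes G$ and equivariance is with respect to the partial action of $H^k\rtimes G$ on $P^{(k)}$ and the adjoint action on the target. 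The map $\Gamma_k$ of (\ref{eq:Gammak}) is then the group homomorphism obtained by projecting onto the $H^k$-coordinates after twisting by $C_{g_0}$; I would check directly that $\Gamma_k$ is a \emph{surjective} group homomorphism whose kernel is exactly $\CI_{H^k\rtimes G}(P^{(k)},\{e\}^k\rtimes G)$, i.e. the equivariant maps whose $H^k$-component is trivial.

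Granting that, the heart of the argument is the identification of the quotient. By Definition \ref{defn:MorNG}, ${\rm Aut}(P^{(k)}/_G)$ is by construction ${\rm Aut}(P^{(k)})/_{\sim_{\Pi_k}}$, and the commutativity of (\ref{eq:barPhik}) says $\Pi_k=\Xi_k\circ\Gamma_k\circ\overline\Psi_k$ with $\Xi_k$ injective. Hence two automorphisms lie in the same fiber of $\Pi_k$ precisely when their images under $\Gamma_k\circ\overline\Psi_k$ agree, so $\sim_{\Pi_k}$ coincides with the equivalence relation pulled back along the homomorphism $\Gamma_k\circ\Psi_k$ from the identification $C^\infty_{H^k\rtimes G}(P^{(k)},H^k)$. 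Since $\Psi_k$ is a group isomorphism and $\Gamma_k$ is a surjective homomorphism with the stated kernel, the induced map on the quotient is a group isomorphism
\[
{\rm Aut}(P^{(k)})/_{\sim_{\Pi_k}}\;\simeq\;\CI_{H^k\rtimes G}(P^{(k)},H^k\rtimes G)/\CI_{H^k\rtimes G}(P^{(k)},\{e\}^k\rtimes G)\;\simeq\;\CI_{H^k\rtimes G}(P^{(k)},H^k),
\]
which is exactly the chain of bijections recorded just before Definition \ref{defn:MorNG}, now upgraded to a group isomorphism. The only genuinely group-theoretic point to verify here is that the fibers of $\Pi_k$ really are cosets of a subgroup, so that the quotient carries a group structure; this follows once $\Gamma_k$ is known to be a homomorphism and $\overline\Psi_k$, $\Xi_k$ behave as claimed, and it is what justifies calling ${\rm Aut}(P^{(k)}/_G)$ a group rather than a mere set.

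To pass from the fixed-$k$ isomorphism to the $\bullet$-statement (\ref{eq:MorNG}), I would verify naturality in $k$: the isomorphisms of Proposition \ref{prop:nerve.2-grp}, Theorem \ref{prop:princ.2.bnd} and Proposition \ref{prop:NerveG} intertwine the simplicial structure, so the maps $\Gamma_k$, $\Psi_k$, $\Xi_k$ commute with the face maps $\partial_i^k$ and degeneracy maps up to the identifications $G^{(k)}\cong H^k\rtimes G$. Concretely, I would check that each $\Gamma_k$ is compatible with the simplicial operators so that the family $\{\Gamma_k\}_k$ is a morphism of simplicial groups; this makes $\{{\rm Aut}(P^{(k)}/_G)\}_k$ and $\{\CI_{H^k\rtimes G}(P^{(k)},H^k)\}_k$ isomorphic as simplicial objects, yielding (\ref{eq:MorNG}).

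The main obstacle I anticipate is not any single computation but keeping the equivariance bookkeeping consistent: verifying that $\Gamma_k$ is a group homomorphism requires unwinding the semidirect-product multiplication in $H^k\rtimes G$ and confirming that the twist $C_{g_0}$ turns the nonabelian product into the correct action, and confirming that $\ker\Gamma_k$ is precisely $\CI_{H^k\rtimes G}(P^{(k)},\{e\}^k\rtimes G)$ rather than a larger subgroup. Once the homomorphism property and the kernel are pinned down, the quotient isomorphism is formal from the commutative diagram, and the passage to the nerve is a routine naturality check using the simplicial identities already recorded in Proposition \ref{prop:NerveG}.
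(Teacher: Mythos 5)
Your proposal is correct and is essentially the paper's own argument: Theorem \ref{thm:MorNGP2} receives no separate proof in the text, being read off --- exactly as you assemble it --- from the commutative diagram (\ref{eq:barPhik}), the injectivity of $\Xi_k$, the chain of identifications ${\rm Aut}(P^{(k)})/_{\sim_{\Pi_k}}\cong \CI_{H^k\rtimes G}(P^{(k)},H^k\rtimes G)/\CI_{H^k\rtimes G}(P^{(k)},\{e\}^k\rtimes G)\cong \CI_{H^k\rtimes G}(P^{(k)},H^k)$ displayed just before Definition \ref{defn:MorNG}, and the simplicial compatibility recorded in Proposition \ref{prop:NerveG}. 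The only caveat is that your claim that $\Gamma_k$ of (\ref{eq:Gammak}) is itself a pointwise group homomorphism is slightly stronger than what the paper uses or asserts: with the semidirect-product law one only gets that $\Gamma_k$ is constant on the fibers of $\Pi_k$ (equivalently, on cosets of the $G$-valued equivariant maps, whose normality follows from equivariance, not from normality of $\{e\}^k\rtimes G$ in $H^k\rtimes G$) and that the induced bijection on the coset space is multiplicative --- a verification the paper, like you, leaves to the equivariance bookkeeping.
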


	  We now consider a bundle gerbe $B^{(1)}$ and, as before, denote by $\Psi$ the functor of Proposition \ref{prop:grb.to.pb}. Then $P^{(1)}=\Psi(B^{(1)})$ is a base trivial PB groupoid, $B=\ker(\bs_G)\overset{\pi^{(1)}}{\longrightarrow} M^{(1)}$ as in Prop \ref{prop:btpb.to.grb}; also $B^{(k)}\simeq P^{(k)}/_G$ by Proposition \ref{prop:bg.pq}. Since the functor $\Psi$ of Proposition \ref{prop:grb.to.pb} and the functor $\Xi$ of Proposition \ref{prop:btpb.to.grb} are inverses of each other we get that 
		$${\rm Aut}(B^{(1)}):=\left\{\, \varphi\in \CI_{H\rtimes G}(B^{(1)},B^{(1)}) \st \varphi\circ\pi^{(1)}=\varphi \,\right\}\simeq {\rm Aut}(P^{(1)})|_{B^{(1)}}\simeq {\rm  Aut}(P^{(1)}/_G)$$
		   so that the following statement is a straightforward consequence of Theorem \ref{thm:MorNGP2}.
	 \begin{cor}\label{cor:AutBk} { For any $k\in \mathbb N$, there is a isomorphism:
	 	\[{\rm  Aut}(B^{(k)})={\rm Mor}(B^{(k)})\simeq	{\rm  Aut}(P^{(k)}/_G)\simeq 
	 	\CI_{H^k\rtimes G}({P}^{(k)}, H^k).  \] 
	 	
	 Correspondingly, inner transformations  of the nerve 
	 	of ${B^{\bullet}}$ form a group with the following group isomorphism:
	 	\begin{equation} {\rm  Aut}({B^\bullet})\simeq{\rm  Aut}(P^{\bullet}/_G)\simeq \CI_{H^\bullet\rtimes G} ({P^\bullet}, H^\bullet).
	 	\end{equation}}
	 \end{cor}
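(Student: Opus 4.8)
The plan is to run, at each nerve level $k$, the same chain of identifications already recorded for $k=1$ just above, and then to feed the result into Theorem \ref{thm:MorNGP2}. First I would note that, since $P^{(1)}=\Psi(B^{(1)})$ is base trivial and $\Xi$ is a two-sided inverse of $\Psi$, the gerbe $\Xi(P^{(1)})$ of Proposition \ref{prop:btpb.to.grb} is $B^{(1)}$ itself; Proposition \ref{prop:bg.pq}(2) identifies it with $P^{(1)}/_G$ as a groupoid. Passing to nerves and using Proposition \ref{prop:NerveG}, which identifies $N^k(P^{(1)}/_G)$ with $N^k(P^{(1)})/_G = P^{(k)}/_G$, I obtain the groupoid isomorphism $B^{(k)} = N^k(B^{(1)}) \simeq P^{(k)}/_G$ for every $k$, exactly as asserted.

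Next I would transport inner transformations along this isomorphism. Because $\Psi$ sends a gerbe morphism $f$ to ${\rm Id}_G\times f$ and $\Xi$ acts by restriction to $\ker(\bs_G)$, an $H^k\rtimes G$-equivariant self-morphism of $B^{(k)}$ over the identity of its base corresponds to a self-morphism of the base-trivial PB groupoid $P^{(k)}$, restricted to $B^{(k)}$, and thence to an inner transformation of $P^{(k)}/_G$ in the sense of Definition \ref{defn:MorNG}. This is the $k$-fold version of the identification ${\rm Aut}(B^{(1)}) \simeq {\rm Aut}(P^{(1)})|_{B^{(1)}} \simeq {\rm Aut}(P^{(1)}/_G)$ stated above; since the groupoid isomorphism intertwines composition of morphisms, it is a group isomorphism, and in particular every element of ${\rm Mor}(B^{(k)})$ is invertible, so ${\rm Aut}(B^{(k)}) = {\rm Mor}(B^{(k)})$.

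It then suffices to apply Theorem \ref{thm:MorNGP2}, giving ${\rm Aut}(P^{(k)}/_G) \simeq \CI_{H^k\rtimes G}(P^{(k)}, H^k)$, and to compose the isomorphisms. For the bullet statement I would check that all the identifications — the nerve of the isomorphism $B^{(1)}\simeq P^{(1)}/_G$, the partial-quotient comparison of Proposition \ref{prop:NerveG}, and the bijection $\overline{\Psi}_k$ underlying Theorem \ref{thm:MorNGP2} — are natural in $k$, hence commute with the face and degeneracy maps; consequently they assemble into the stated isomorphism ${\rm Aut}(B^\bullet)\simeq{\rm Aut}(P^\bullet/_G)\simeq \CI_{H^\bullet\rtimes G}(P^\bullet, H^\bullet)$ of simplicial inner transformation groups.

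The step requiring the most care is the middle one: confirming that the groupoid isomorphism $B^{(k)}\simeq P^{(k)}/_G$ genuinely matches the two a priori distinct notions of automorphism — the $H^k\rtimes G$-equivariant gerbe morphisms over the identity versus the $\sim_{\Pi_k}$-classes of Definition \ref{defn:MorNG}. One must trace how equivariance of the gerbe morphism, after lifting through $\Psi$ and descending through the quotient by $\{e\}^k\rtimes G$, translates precisely into membership in a single $\sim_{\Pi_k}$-class. This is essentially dictated by $\Psi$ and $\Xi$ being mutually inverse and by their explicit action on morphisms, so once the bookkeeping is settled the remaining steps are purely formal.
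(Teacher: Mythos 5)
Your proposal is correct and follows essentially the same route as the paper: identify $B^{(1)}=\Xi(P^{(1)})$ with the partial quotient $P^{(1)}/_G$ via Proposition \ref{prop:bg.pq} and the inverse pair $(\Psi,\Xi)$, transport automorphisms to get ${\rm Aut}(B^{(k)})\simeq{\rm Aut}(P^{(k)}/_G)$, and conclude by Theorem \ref{thm:MorNGP2}. Your explicit appeal to Proposition \ref{prop:NerveG} for the nerve-level identification $B^{(k)}\simeq P^{(k)}/_G$, and the naturality check in $k$ for the simplicial statement, merely make explicit steps the paper leaves implicit in its citation of Proposition \ref{prop:bg.pq}.
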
 
	 
	\subsection{The case of the pair groupoid of a principal bundle }

Specialising    Proposition \ref{prop:gaugeprincipal} to the pair groupoid ${P^{(1)}}={\CP^{(1)}}(P)$,  we consider the canonical projection of the principal bundle $G\curvearrowright P\to M$,
\[  Q\colon {P^{(1)}}\longrightarrow {\CG}(P):=P^{(1)}/_G\]
onto the quotient partial quotient groupoid ${\CG}(P)$ by the diagonal action. Applying Theorem \ref{thm:MorNGP2} (with $H=G$) we get the following corollary.

\begin{cor}\label{cor:GaugeNGP}
Gauge transformations of the nerve of  the partial quotient groupoid $\CG(P)$ of a principal $G$-bundle $P\to M$ are invertible and form a group

\begin{equation}\label{eq:MorNGP}
	{\rm  Aut}({\CG^\bullet}(P))\simeq \CI_{G^\bullet\rtimes G}({\CP^\bullet}(P), G^\bullet).
\end{equation}
\end{cor}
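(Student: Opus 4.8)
The plan is to obtain this statement as a direct specialization of Theorem \ref{thm:MorNGP2} to the principal bundle groupoid built from the pair groupoid of $P$. First I would recall the data assembled in Example \ref{ex:PBgauge0}: the pair groupoid $\CP^{(1)}(P)=P\times P$ carries the free and proper action of the Lie $2$-group $G\rtimes G\soutar G$ given by $(h,g)\cdot(p,q)=(hg\,p,g\,q)$, so that it is a principal $(G\rtimes G)$-bundle groupoid over $\CP^{(1)}(M)$ in the sense of Definition \ref{defn:Lie3pb}, with $H=G$. Here the underlying crossed module is the one attached to the normal subgroup $H=G$ of $G$, with conjugation action $C$ and $d={\rm id}_G$. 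Freeness and properness of the action are inherited from the principality of $P\to M$, so the partial quotient groupoid of Definition \ref{def:gauge.grpd} is defined; by Example \ref{ex:PBgauge} it is precisely the gauge groupoid, $\CP^{(1)}(P)/_G=\CG(P)$.

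Next I would match the two sides of the desired isomorphism against those of Theorem \ref{thm:MorNGP2}. Setting $P^{(1)}=\CP^{(1)}(P)$ and $H=G$, so that $H^k=G^k$ and $H^k\rtimes G=G^k\rtimes G$, the left-hand side ${\rm Aut}(P^{(k)}/_G)$ becomes ${\rm Aut}(\CG^{(k)}(P))$ once we transport along the nerve identification $\CP^{(k)}(P)/_G\cong\CG^{(k)}(P)$ of (\ref{eq:GkGk-1}), which is the specialization of (\ref{eq:GkGk-11}) in Proposition \ref{prop:NerveG}. The right-hand side $\CI_{H^k\rtimes G}(P^{(k)},H^k)$ becomes $\CI_{G^k\rtimes G}(\CP^{(k)}(P),G^k)$. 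Theorem \ref{thm:MorNGP2} then yields, for each $k$, the group isomorphism ${\rm Aut}(\CG^{(k)}(P))\simeq\CI_{G^k\rtimes G}(\CP^{(k)}(P),G^k)$, and assembling these over all $k\in\KN$ produces the nerve-level isomorphism (\ref{eq:MorNGP}). The invertibility claim, namely that every inner transformation of $\CG^\bullet(P)$ is an automorphism, together with the fact that these transformations form a group, is inherited directly from the corresponding assertions in Theorem \ref{thm:MorNGP2}, which in turn rest on the identification ${\rm Aut}={\rm Mor}$ already recorded for the nerve in (\ref{eq:AutNP}).

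Since everything is a substitution, I expect no genuine obstacle, and the only points demanding care are bookkeeping ones. One must check that Example \ref{ex:PBgauge0} really instantiates the hypotheses of Theorem \ref{thm:MorNGP2} with $H=G$, in particular that $d={\rm id}$ and that the conjugation action $C$ is the relevant one, so that $H^k\rtimes G=G^k\rtimes G$ carries the correct semidirect product coming from Proposition \ref{prop:nerve.2-grp}. One must also verify that the simplicial maps of Proposition \ref{prop:NerveG} intertwine the maps $\overline\Psi_k$, $\Xi_k$, $\Gamma_k$ and $\Pi_k$ of the commuting diagram (\ref{eq:barPhik}) level by level, so that the family $\{{\rm Aut}(\CG^{(k)}(P))\}_k$ indeed assembles into a simplicial object whose total isomorphism is (\ref{eq:MorNGP}). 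These verifications are routine given the explicit descriptions already established.
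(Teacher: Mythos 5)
Your proposal is correct and matches the paper's own argument, which likewise obtains the corollary by specialising Theorem \ref{thm:MorNGP2} with $H=G$ to the principal $(G\rtimes G)$-bundle groupoid structure on the pair groupoid $\CP^{(1)}(P)$ of Example \ref{ex:PBgauge0}, whose partial quotient is the gauge groupoid $\CG(P)$ as in Example \ref{ex:PBgauge}. The additional bookkeeping checks you flag (the crossed module with $d=\mathrm{id}$ and conjugation action, and the level-wise nerve identification via Proposition \ref{prop:NerveG}) are exactly the implicit substitutions the paper relies on.
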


Moreover, a groupoid morphism 
$\widehat f:\CP^{(1)}(P)\rightarrow {\CP^{(1)}}(P)$ induces a map $N^\bullet (\widehat f)\colon  \CP^\bullet (P) \fto  \CP^\bullet (P)$.   Also,  a principal bundle  morphism $f \in {\rm Aut}(P)$   induces  a groupoid morphism
\begin{eqnarray*}\widehat f\colon {\CP^{(1)}}(P)&\rightarrow& {\CP^{(1)}}(P)\\ (p,q)&\mapsto& (f(p),f(q)).\end{eqnarray*}
The corresponding nerve map $N^\bullet(\widehat f)\colon \CP^\bullet(P)\fto \CP^\bullet(P)$ reads:
$$N^{(k)}(\widehat f)(p_0,\dots,p_k)=(f(p_0),\dots,f(p_k)).$$

\begin{thm}\label{thm:embeddingsgaugetransf}Given a principal $G$-bundle $P\to M$, we have the following canonical injections of gauge group transformations: 
	\begin{equation}\label{eq:towermor} {\rm Aut} (P)  \overset{ \hat{}}{\hookrightarrow} {\rm Aut} ({\CP^{(1)}}(P)) \overset{ N^\bullet}{\hookrightarrow}   {\rm Aut} \left(  {\CP^\bullet}(P) \right) \simeq \CI_{G^\bullet\rtimes G} ( \CP^\bullet (P), G^\bullet\rtimes G), 
	\end{equation}    
\end{thm}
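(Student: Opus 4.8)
The plan is to read the theorem as a composition of three independent facts and to establish each in turn: (i) the assignment $f\mapsto \widehat f$ is a well-defined injective group homomorphism ${\rm Aut}(P)\hookrightarrow {\rm Aut}(\CP^{(1)}(P))$; (ii) the nerve $N^\bullet$ is a well-defined injective group homomorphism ${\rm Aut}(\CP^{(1)}(P))\hookrightarrow {\rm Aut}(\CP^\bullet(P))$; and (iii) the rightmost isomorphism is precisely the specialisation of the general isomorphism (\ref{eq:AutNP}) to $P^{(1)}=\CP^{(1)}(P)$ with $H=G$. The substance lies in (i) and in the injectivity in (ii); step (iii) is then immediate.

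For (i), I would recall from Example \ref{ex:PBgauge0} that $\CP^{(1)}(P)=P\times P$ is a principal bundle groupoid over $\CP^{(1)}(M)$ with structure $2$-group $G\rtimes G$ acting by $(h,g)\cdot(p,q)=(hg\,p,\,g\,q)$. I would first check that $\widehat f(p,q):=(f(p),f(q))$ is a morphism of the pair groupoid covering the identity of $\CP^{(1)}(M)$: it preserves source, target and the composition $(p,q)\circ(q,r)=(p,r)$ trivially, and since $f$ is fibre-preserving the induced base map is the identity of $M\times M$. The key point is the $(G\rtimes G)$-equivariance, which uses only the $G$-equivariance of $f$:
\[
\widehat f\big((h,g)\cdot(p,q)\big)=\big(f(hg\,p),f(g\,q)\big)=\big(hg\,f(p),\,g\,f(q)\big)=(h,g)\cdot\widehat f(p,q),
\]
so $\widehat f\in{\rm Aut}(\CP^{(1)}(P))$. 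That $\widehat{f_1\circ f_2}=\widehat{f_1}\circ\widehat{f_2}$, and that $\widehat f=\widehat g$ forces $f=g$, are then immediate from the definition, giving an injective group homomorphism.

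For (ii), by Theorem \ref{prop:princ.2.bnd} the nerve is a functor sending a morphism of principal bundle groupoids to a morphism of the associated simplicial principal bundle, with $N^{(k)}(\widehat f)(p_0,\dots,p_k)=(f(p_0),\dots,f(p_k))$; functoriality gives $N^\bullet(\widehat f_1\circ\widehat f_2)=N^\bullet(\widehat f_1)\circ N^\bullet(\widehat f_2)$, so $N^\bullet$ is a group homomorphism. Injectivity I would obtain from the level-$1$ truncation: since $N^{(1)}(\widehat f)=\widehat f$, any equality $N^\bullet(\widehat f)=N^\bullet(\widehat g)$ restricts at simplicial degree one to $\widehat f=\widehat g$. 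Finally, for (iii) I would invoke (\ref{eq:AutNP}), which for the pair groupoid and $H=G$ reads ${\rm Aut}(\CP^\bullet(P))\cong\CI_{G^\bullet\rtimes G}(\CP^\bullet(P),G^\bullet\rtimes G)$, closing the chain.

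The main obstacle I anticipate is the bookkeeping in step (i): one must keep track of the fact that the $h$-component of the $2$-group acts only on the \emph{first} factor of $P\times P$, so that the equivariance of $\widehat f$ is not automatic from fibre-preservation but genuinely requires the full $G$-equivariance of the gauge transformation $f$, checked against the specific action of Example \ref{ex:PBgauge0}. Once that computation is pinned down, the remaining verifications (the homomorphism properties, the two injectivity claims, and functoriality of the nerve) are formal, and no idea beyond the naturality of the constructions $\widehat{(\,\cdot\,)}$ and $N^\bullet$ is needed.
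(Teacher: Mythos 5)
Your proposal is correct and follows essentially the same route as the paper, which leaves the proof implicit in the constructions immediately preceding the theorem: the assignment $f\mapsto\widehat f$, the nerve formula $N^{(k)}(\widehat f)(p_0,\dots,p_k)=(f(p_0),\dots,f(p_k))$, and the specialisation of the isomorphism (\ref{eq:AutNP}) to $\CP^{(1)}(P)$ with $H=G$. Your explicit verifications (the $(G\rtimes G)$-equivariance of $\widehat f$ against the action of Example \ref{ex:PBgauge0}, the homomorphism properties, and injectivity via the degree-one truncation $N^{(1)}(\widehat f)=\widehat f$) simply supply the details the paper takes as evident.
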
  

\appendix

\section{Appendix: Morita equivalence}

In order to interpret $H\rtimes G\soutar G$-principal $2$-bundles in terms of principal bundle groupoids, we need the notion of Morita equivalence \cite{MK}, \cite{MM}, \cite{LTX}. {Here are three equivalent definitions.}

	{The following corresponds to a generalised isomorphism in the sense of Definition \ref{defn:generalisedhom}:}
	\begin{defn}\label{defn:Mor1}
		Let $G^{(1)}\soutar G$ and $H^{(1)}\soutar H$ be two Lie groupoids. A {\bf bitorsor} is a manifold $X$ and surjective submersions $G\overset{\rho}{\leftarrow} X\overset{\sigma}{\rightarrow} H$, such that \begin{enumerate}
			\item $X\overset{\rho}{\rightarrow} G$ is a  principal left $G^{(1)}$-bundle over $H$, { so $X/H^{(1)}\simeq G$,}
			\item $X\overset{\sigma}{\rightarrow} H$ is a principal right $H^{(1)}$-bundle over $G$, { so $X/G^{(1)}\simeq H$,}
			\item The $G^{(1)}$- and $H^{(1)}$-actions on $X$ commute, 
		\end{enumerate}
	{and  the following diagramm commutes:}	\[\begin{tikzcd}
		G^{(1)} \ar[d, shift left=.2em,"\bs"]\ar[d, shift right=.2em, swap,"\bt"] \arrow[r, bend left=30,swap, "\star"] & X \ar[dl, "\rho"] \ar[dr, swap, "\sigma"]& H^{(1)} \arrow[l, bend right=30,  "\star"] \ar[d, shift left=.2em,"\bs"]\ar[d, shift right=.2em, swap,"\bt"]  \\
		{G} & & { H}\\
		\end{tikzcd} \]
	\end{defn}

	\begin{defn}\label{defn:Mor2}
		Two Lie groupoids $G^{(1)}\soutar G$ and $H^{(1)}\soutar H$ are {\bf Morita equivalent} if there is a manifold $P$ with surjective submersions $G\overset{\rho}{\leftarrow} P\overset{\sigma}{\rightarrow} H$, such that $\rho^{-1}G^{(1)}\cong \sigma^{-1}H^{(1)}$ as Lie groupoids over $P$.
	\end{defn}
\begin{defn}
	Two Lie groupoids $G^{(1)}\soutar G$ and $H^{(1)}\soutar H$ are {\bf weak equivalent} if there is a groupoid manifold $P^{(1)}\soutar P$ with weak equivalences $G^{(1)}\overset{\rho}{\leftarrow} P^{(1)}\overset{\sigma}{\rightarrow} H^{(1)}$ (see Definition \ref{defn:Mor3} for the notion of weak equivalence of groupoids).
\end{defn}
\begin{ex}\label{ex:Y[1]} Let $\pi:Y\to M$ be a surjective submersion, and $M^{(1)}\soutar M$  be a Lie groupoid. The pull-back groupoid $\pi^{-1} M^{(1)} \soutar Y$ is Morita equivalent, and weak equivalent to $M^{(1)}\soutar M$, with $\rho={\rm Id}$ and $\sigma=\pi$. In particular, when $M^{(1)}=M$, then $\pi^{-1}M^{(1)}\simeq Y^{[2]}$ is Morita equivalent and weak equivalent to $M$.
\end{ex}

One easily checks that Morita equivalence and weak equivalence defines an equivalence relation. Another important property is its stability under generalised isomorphisms {(Definition \ref{defn:generalisedhom}).}

	\begin{lem}\label{lem:equivMor}
		Two Lie groupoids $G^{(1)}\soutar G$ and $H^{(1)}\soutar H$ are Morita equivalent if and only if they are weak equivalent or if and only if there exists a bitorsor.
	\end{lem}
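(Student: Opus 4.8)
The plan is to close a cycle of implications among the three conditions, say
\[ \text{Morita equivalent} \;\Longrightarrow\; \text{weak equivalent} \;\Longrightarrow\; \text{bitorsor exists} \;\Longrightarrow\; \text{Morita equivalent}, \]
so that any one implies the other two. The easy direction to dispatch first is Morita $\Rightarrow$ weak: starting from Morita data $G\overset{\rho}{\leftarrow} P\overset{\sigma}{\rightarrow} H$ together with an isomorphism $P^{(1)}:=\rho^{-1}G^{(1)}\cong\sigma^{-1}H^{(1)}$ over $P$, I would invoke Example \ref{ex:Y[1]}: since $\rho$ and $\sigma$ are surjective submersions, the canonical projections $\rho^{-1}G^{(1)}\fto G^{(1)}$ and $\sigma^{-1}H^{(1)}\fto H^{(1)}$ are weak equivalences. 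Composing with the isomorphism exhibits $G^{(1)}\leftarrow P^{(1)}\rightarrow H^{(1)}$ as a span of weak equivalences, which is exactly the weak equivalence condition.

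For the closing implication, bitorsor $\Rightarrow$ Morita, I would simply take $P:=X$ with the two surjective submersions $\rho,\sigma$ already provided by Definition \ref{defn:Mor1}. The content is to produce an isomorphism $\rho^{-1}G^{(1)}\cong\sigma^{-1}H^{(1)}$ of groupoids over $X$, i.e. a fibrewise bijection of arrows. I would send an arrow $(x_1,g,x_0)$ of $\rho^{-1}G^{(1)}$ (so $\bs(g)=\rho(x_0)$ and $\bt(g)=\rho(x_1)$) to the unique arrow of $H^{(1)}$ determined by principality of the right $H^{(1)}$-action: since $\rho(g\star x_0)=\bt(g)=\rho(x_1)$, the division map of the right $H^{(1)}$-bundle $X\to G$ yields a unique $h$ with $(g\star x_0)\cdot h=x_1$, which determines the required arrow $x_0\to x_1$ in $\sigma^{-1}H^{(1)}$. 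That this is a groupoid morphism, identity on objects, and invertible follows because the left $G^{(1)}$- and right $H^{(1)}$-actions commute; symmetry in the roles of $(\rho,G^{(1)})$ and $(\sigma,H^{(1)})$ supplies the inverse.

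The middle implication, weak equivalent $\Rightarrow$ bitorsor, is the main obstacle. Starting from weak equivalences $\varphi\colon P^{(1)}\fto G^{(1)}$ and $\psi\colon P^{(1)}\fto H^{(1)}$, I would first attach to each its canonical bibundle --- for $\varphi$ the manifold $G^{(1)}{}_\bs\!\times_\varphi P$ carrying the left $G^{(1)}$-action by multiplication and the right $P^{(1)}$-action through $\varphi$ --- and then form the composite $\left(G^{(1)}{}_\bs\!\times_\varphi P{}_\psi\!\times_\bt H^{(1)}\right)/P^{(1)}$ by the diagonal $P^{(1)}$-action, as the candidate $(G^{(1)},H^{(1)})$-bitorsor. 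The delicate points, where the hypotheses are genuinely used, are the following: essential surjectivity of $\varphi$ (resp. $\psi$) guarantees that the anchor and projection maps are surjective submersions, this being literally the condition in Definition \ref{defn:Mor3}; full faithfulness guarantees that the $P^{(1)}$-action is free and that the induced $G^{(1)}$- and $H^{(1)}$-actions are principal in the sense of Definition \ref{defn:Groupoidprincipalbundle}; and one must verify that the quotient by $P^{(1)}$ is a smooth manifold, using freeness and properness of the action.

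I expect the technical heart to be precisely this last step: checking that the two residual actions on the quotient commute and are each principal (free and proper, with smooth quotient), so that the composite really is a bitorsor rather than merely a bibundle. This is where the careful handling of iterated fibre products and the transversality needed for the quotient to be a manifold enters; once it is in place, the verification of the bitorsor axioms and of the cyclic consistency of the three implications is routine bookkeeping.
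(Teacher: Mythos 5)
Your proposal is correct and follows essentially the same route as the paper, which proves the lemma by the same cycle of three implications (bitorsor $\Rightarrow$ Morita via the action-induced isomorphism of pullback groupoids, Morita $\Rightarrow$ weak via the pullback groupoid in the middle, weak $\Rightarrow$ bitorsor via the composite bibundle) and defers the technical details --- precisely the smoothness-of-quotient and principality checks you flag --- to \cite{MM}, \cite{LTX} and \cite{G}. The only quibble is a harmless orientation slip in your bitorsor $\Rightarrow$ Morita step: since the $G^{(1)}$-action is $\sigma$-invariant, the division element $h$ with $(g\star x_0)\cdot h = x_1$ runs from $\sigma(x_1)$ to $\sigma(x_0)$, so the arrow of $\sigma^{-1}H^{(1)}$ from $x_0$ to $x_1$ is $h^{-1}$ rather than $h$.
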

	
	This was proven in \cite{MM} and \cite{LTX} along the lines briefly sketched here: A bitorsor is a Morita equivalence since the actions give an isomorphism of the pullback groupoids. A Morita equivalence gives a weak equivalence by means of the pullback groupoid in the middle. From a weak equivalence one can build a bitorsor. For further details, see \cite{G}.

\vfill \eject \noindent

\bibliographystyle{alpha}
\bibliography{MEfolbib}

\end{document}